\newtheorem{theorem}{Theorem}[section]
\newtheorem{lemma}[theorem]{Lemma}
\newtheorem{proposition}[theorem]{Proposition}
\newtheorem{corollary}[theorem]{Corollary}
\theoremstyle{definition}
\newtheorem{definition}[theorem]{Definition}
\newtheorem{example}[theorem]{Example}
\newtheorem{fact}[theorem]{Fact}
\theoremstyle{remark}
\newtheorem{remark}[theorem]{Remark}
\newtheorem{notation}[theorem]{Notation}
\newtheorem{note}[theorem]{Note}
\numberwithin{equation}{section}
\newcommand{\prend}{$\hfill \Box$}
\newcommand{\vertiii}[1]{{\left\vert\kern-0.25ex\left\vert\kern-0.25ex\left\vert #1 
    \right\vert\kern-0.25ex\right\vert\kern-0.25ex\right\vert}}
\begin{document}


\title[Probabilistic Pad\'e]
{
On the clustering of Pad\'e zeros and poles of random power series
}

\author{Stamatis Dostoglou}
\address{Department of Mathematics, University of Missouri, Columbia, MO, 65211.}
\email{dostoglous@missouri.edu}

\author{Petros Valettas}
\address{Departments of Mathematics and Electrical Engineering \& Computer Science, University of Missouri, Columbia, MO, 65211.}
\email{valettasp@missouri.edu}
\thanks{P.V. is supported in part by the Simons Foundation (grant \#638224).}




\subjclass[2020]{Primary 30B20; Secondary 60B20.}
\keywords{Pad\'e approximatns, Toeplitz matrices, 
Random power series}


\begin{abstract}
	We estimate non-asymptotically the probability of uniform clustering around the unit circle of the zeros of the $[m,n]$-Pad\'e approximant	of a random power series $f(z) = \sum_{j=0}^\infty a_j z^j$ for $a_j$ independent, with finite first moment, 
	and L\'evy function satisfying ${\mathcal L}(a_j , \varepsilon) \leq K\varepsilon$. Under the same assumptions we show that  almost surely $f$  has infinitely 
	many zeros in the unit disc, with the unit circle serving as a natural boundary for $f$. For $R_m$ the radius of the largest disc containing at most $m$ zeros of $f$, a deterministic result of Edrei implies that in our setting the poles of the $[m,n]$-Pad\'e approximant almost surely cluster uniformly at the circle of radius $R_m$ as $n \to \infty$ and $m$ stays fixed, 
	and we provide almost sure rates of converge of these $R_m$'s to $1$.  We also show that our results on the clustering of the zeros hold for 
	log-concave vectors $(a_j)$ with not necessarily independent coordinates.
\end{abstract}


\maketitle



\section{Introduction}

The $[m,n]$-Pad\'e approximant of a power series $f(z) = \sum_{j=0}^\infty a_j z^j$ is the rational function
 \begin{equation}
       \mathfrak{P}_{mn}(z) 
       = 
       \frac{P_{mn}(z)}{Q_{mn}(z)}
       = 
       \frac{p_0 + p_1 z + \ldots + p_mz^m}{q_0 + q_1 z + \ldots + q_n z^n}
\end{equation}
with Taylor series at $0$ that matches $f$ as much as possible, as introduced by Frobenius in \cite{Frob}.
The aim of this article is to examine the behavior of Pad\'e approximants when the coefficients $a_j$, 
and therefore the $p$'s and $q$'s, are random variables.  Our interest is in the behavior, with high probability, 
of the random zeros and poles of $\mathfrak{P}_{mn}$ and how these cluster around specific geometric loci, cf.\ \cite{Froi}, \cite{Gil97}, \cite{Gil99}. 
Under our assumptions here on the randomness of the $a_j$'s, and for the range of $m$ and $n$ we examine, this locus turns out to be the unit circle.  

In particular, as the $[m,n]$-Pad\'e approximant of the 
power series $\sum_{j =0}^\infty a_jz^j$ is the same as the Pad\'e approximant of the polynomial $\sum_{j=0}^N a_j z^j$ for $N \geq m+n$, see section \ref{prelims},  we show as Theorem \ref{thm:adv}  that: Whenever $(a_j)_{j=0}^N$ for $N \geq m+n$ is an isotropic random vector 
with independent coordinates and with a certain anti-concentration property, see \eqref{themainanticoncentration}, then the zeros of the 
numerator $P_{mn}$ of the $[m,n]$-Pad\'e approximant of the random polynomial $\sum_{j =0}^N a_jz^j$ cluster uniformly around the 
unit circle for $m$ and $n$ in a certain range, see \eqref{therange}.  
We understand uniform clustering  as in Szeg\"o \cite{S}, Rosenbloom \cite{R}, and Erd\H os-Tur\'an, \cite{ET}. We measure how much 
clustering is achieved either via the Erd\H os-Tur\'an ratio \eqref{The Ratio} or via the distance of the empirical measure of the zeros of 
the $P_{mn}$ from the uniform measure on the unit circle in the bounded Lipschitz metric \eqref{the metric}. The result is non-asymptotic 
in that we find the range of $m$ and $n$ for which the desired degree of clustering happens. 
Theorem \ref{thm:adv} follows from the more general Theorem  \ref{thm:main-1}, whose proof relies on the 
deterministic Erd\H os-Tur\'an result \cite{ET}, a standard application of Jensen's formula on zeros of holomorphic functions \cite{J}, 
the calculation of the coefficients of the Pad\'e numerator using Toeplitz matrices \eqref{eq:Toep-1}, \eqref{eq:Toep-2}, 
and our estimate on the probability of  invertibility of Toeplitz matrices, Proposition \ref{prop:sb-det}. 

Theorem \ref{radius} provides another way of understanding why the zeros of Pad\'e numerators cluster uniformly around the unit circle. 
One of the conclusions of that Theorem is that the unit circle is almost surely a natural boundary for the power series $f(z) = \sum_{j=0}^\infty a_j z^j$ 
when the $a_j$'s are independent, have moment bounds, and anti-concentrate. 
Our proof of this relies on the Ryll-Nardzewski theorem \cite[p.\ 41]{K} and complements the classical result on symmetric random $a_j$'s \cite[p.\ 39]{K} 
and the results of Breuer and Simon \cite{BS} on bounded $a_j$'s.
Now a deterministic result of Edrei \cite{E}, recasting Hadamard \cite{Had1892} and generalizing Szeg\"o \cite{S}, 
shows that for meromorphic $f$, as $n$ stays constant and $m\to \infty$, the zeros of the Pad\'e numerator $P_{mn}$ cluster, 
up to subsequence, around the boundary of the largest disc that contains not more than $n$ poles of $f$. In the presence 
of a natural boundary all such discs are the unit disc, therefore clustering takes place around the unit circle. Note that Edrei's 
deterministic result is asymptotic and holds for some $m$-subsequence whereas our probabilistic Theorem \ref{thm:adv} holds for all $m$'s and $n$'s that satisfy our non-asymptotic condition \eqref{therange}. Note also that part of Edrei's proof is devoted to showing that the Toeplitz determinants that calculate the Pad\'e numerators and denominators are not singular for his subsequences. In our work, this is reflected by our estimate on the probability of the invertibility of random Toeplitz matrices, Proposition \ref{prop:sb-det}.

Regarding the poles of the Pad\'e approximant $\mathfrak{P}_{mn}$, 
we rely in Section \ref{as} on the fact the the $[m,n]$-denominator for $f$ is the $[n,m]$-numerator of $1/f$ 
and appeal directly to Edrei's result for $1/f$. For this, we need information on the position of the poles of $1/f$, 
equivalently of the zeros of $f$. This we  garner in Theorem \ref{roots} that shows that almost surely $f$ has 
infinitely many zeros in the unit disc and estimates how fast the radius $R_m$  of the largest disc that contains at most $m$ zeros converges to $1$. 
The assumptions here are the same as for Theorem \ref{radius}. The proof uses Jensen's formula \cite{J}. 
The application of Edrei's result for almost any realization of the random series takes place in Theorem \ref{polecluster} 
showing that for fixed $m$ the zeros of $Q_{mn}$ cluster uniformly  as $n \to \infty$ around the circle of radius $R_m$, with $1 - R_m = O(\log m / m)$.

For $n=0$ the Pad\'e approximants are, of course, nothing but the Taylor polynomials that approximate the power series. For random coefficients, the study of these fits in the extensive literature on random polynomials, see for example \cite{HN, IZ, SS}.

Some background on Pad\'e approximants and preliminary probabilistic results are included in Section \ref{PTET}. 
The final Section \ref{LogConcave} provides the estimate on the probability of the invertibility of random Toeplitz matrices 
when the $a_j$'s are not independent but come from a log-concave random vector. An Appendix shows how the 
Erd\H os-Tur\'an ratio controls the bounded Lipschitz distance of the empirical measure of the zeros of a polynomial from 
the uniform measure on the unit circle.  

Whereas we made some effort to give precise values to the various universal constants that appear in the estimates 
we do not claim that they are the best possible. When we do not specify the constants we reserve the right to change them 
freely using the same notation.

\section{Pad\'e, Toeplitz, and Erd\H os -Tur\'an} \label{PTET}

Pad\'e approximants are inextricably linked with Toeplitz matrices, 
and a result of Erd\H os  and Tur\'an is a standard way to show clustering of roots of polynomials. 
In this section we present these connections along with preliminary results, some probabilistic, that we use later. 

\subsection{Pad\'e approximants and formulas} \label{prelims}

Recall that given a power series
\begin{equation}
        f(z)= \sum_{j=0}^\infty a_j z^j,
\end{equation}
the $[m,n]$-Pad\'e approximant of $f$ is the (unique) rational function
\begin{equation}
        \frak{P}_{mn}=P_{mn}/Q_{mn}
\end{equation}
with $P_{mn}$ and $Q_{mn}$ polynomials on $\mathbb{C}$ of degree at most
$m$ and $n$, respectively, and $Q_{mn}(z) \not\equiv 0$, such that 
\begin{align} \label{eq:F-def}
		f(z) Q_{mn}(z) - P_{mn}(z) = \sum_{j \geq m+n+1} c_{j}z^{j}, 
	\end{align}
for some $c_j$'s, \cite{Frob}, \cite{Gra}.
(For the definition in \cite{Ba} and its place in the probabilistic approach, see Remark \ref{on det} below.)

To determine the coefficients ${\bf p}= (p_0, \ldots, p_m)$ of $P_{mn}$ and ${\bf q}= (q_0, \ldots, q_n)$ of $Q_{mn}$, 
the Frobenius formulation \eqref{eq:F-def} leads to the following linear systems:	
	\begin{align} \label{eq:Toep-1}		
		\begin{bmatrix} 
			a_0&0&0&\ldots&0 \\
			a_1 & a_0 &0&\ldots&0\\
			\vdots \\
			a_m & a_{m-1} & \ldots &\dots& a_{m-n} 
		\end{bmatrix} {\bf q} \equiv C_m^{(n)} {\bf q}= {\bf p}, 	
	\end{align}
and
	\begin{align} \label{eq:Toep-2}
		\begin{bmatrix} 
			a_{m+1} & a_m & a_{m-1} & \ldots & a_{m-(n-1)} \\
			a_{m+2} & a_{m+1} & a_m & \ldots & a_{m-(n-2)} \\
			\vdots & \vdots & \vdots & \ddots & \vdots \\
			a_{m+n} & a_{m+(n-1)} & a_{m+(n-2)} & \ldots & a_m
		\end{bmatrix} {\bf q} \equiv T_m^{(n)} {\bf q} = {\bf 0}.
	\end{align}
Note that $C_m^{(n)}$ is an $(m+1)\times (n+1)$ matrix, whereas $T_m^{(n)}$ is an $n \times (n+1)$ matrix. It is a standard convention that $a_l =0$ when $l$ is negative.

Prominent in what follows will be the $n\times n$ square matrices $A_m^{(n)}$ (following the notation in \cite{E}), a submatrix of $T_m^{(n)}$, 
\begin{equation} \label{assocmatr}
      A_m^{(n)}
      =
     \begin{bmatrix} 
			 a_m & a_{m-1} & \ldots & a_{m-(n-1)} \\
			 a_{m+1} & a_m & \ldots & a_{m-(n-2)} \\
			\vdots & \vdots & \ddots  & \vdots \\
		 a_{m+(n-1)} & a_{m+(n-2)} & \ldots & a_m
		\end{bmatrix}.
\end{equation}
These are Toeplitz matrices, i.e.\ matrices with constant entries on each diagonal. 

\begin{remark} \label{on det}
It is important to consider the case 
\begin{equation}  \label{nonzerodeterminant}
     \det A_m^{(n)} \neq 0, \quad \text{for all\ } m, n.
\end{equation}
Then the Pad\'e polynomials can be written as 
\begin{equation} \label{PQformulas}
\begin{split}
       P_{mn}(z ) 
       &= 
       a_0 + \ldots + \frac{\det A_m^{(n+1)} }{\det A_m^{(n)}}z^m,\\
       Q_{mn}(z) 
       & = 
       1 - \frac{\det (T_m^{(n)}[2])}{{\det A_m^{(n)}}} 
       +
       \ldots 
       + 
       (-1)^n \frac{\det A_{m+1}^{(n)} }{\det A_m^{(n)}}z^n,
\end{split}
\end{equation}
with $T_m^{(n)}[k]$ the square matrix that results from $T_m^{(n)}$ 
from \eqref{eq:Toep-2} after the $k$-th column is omitted, see \cite[Corollary 1, p.\ 18]{Gra}. 

Note that \eqref{nonzerodeterminant} then implies that the top order coefficients of \eqref{PQformulas} 
do not vanish. This in turn implies that any pair of polynomials satisfying \eqref{eq:F-def} are constant multiples of those in \eqref{PQformulas}. 
In this case the Baker condition $Q_{mn}(0) \neq 0$  \cite[p.\ 6]{Ba} is satisfied and we also have
\begin{align} \label{Baker def}
		f(z) - \frac{P_{mn}(z)}{Q_{mn}(z)} = \sum_{j \geq m+n+1} \hat c_j z^j.
	\end{align}
As in our power series $\sum_{j\geq 0} a_j z^j$ will have random coefficients, 
we examine next conditions for \eqref{nonzerodeterminant} to be satisfied almost always, or at least with high probability. 
In particular, pathological examples like $f(z) = 1 + \sum_{j \geq 2} z^j$ when $P_{11}(z) = z$ and $Q_{11}(z) =z$ satisfy \eqref{eq:F-def}
but not \eqref{Baker def}, either almost surely or with high probability will not happen.  
\end{remark}

\subsection{Random Toeplitz determinants} We now quantify 
the singularity of random Toeplitz matrices. Recall first the {\it L\'evy concentration} function for a random variable $\xi$ for $\delta>0$:
	\begin{align}
		{\mathcal L}(\xi, \delta) =\sup_{t\in \mathbb R} \mathbb P(|\xi-t|<\delta).
	\end{align}
	
This measure of dispersion was introduced in the works of Doeblin \& L\'evy \cite{DL}, Kolmogorov \cite{Kolmo}, and Esseen \cite{Ess} 
concerning the spread of sums of independent random variables. In recent years it has become an indispensable
tool in the study of quantitative invertibility of (unstructured) random matrices, see \cite{RV-icm} for a detailed exposition.
In the present work it occupies a central role, too, since it quantifies the invertibility of random patterned matrices (Toeplitz), and it is related
to the existence of a naturally boundary for a random power series.

\begin{lemma} \label{small polynomial values} For any (deterministic) monic polynomial $P$ with 
$\deg P=d$, for any random variable $\xi$, and for any $\varepsilon >0$, we have
	\begin{align}
		\mathbb P \left ( |P(\xi)| < \varepsilon^d \right) \leq d\, \mathcal{L}(\xi, \varepsilon).
	\end{align}
\end{lemma}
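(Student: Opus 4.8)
The plan is to factor the monic polynomial over $\mathbb{C}$ and turn the bound on $|P(\xi)|$ into a bound on how close $\xi$ comes to one of the roots. Write $P(z) = \prod_{k=1}^d (z - \lambda_k)$ with $\lambda_1,\dots,\lambda_d \in \mathbb{C}$ the roots counted with multiplicity; this uses only that $P$ is monic of degree $d$. Then $|P(\xi)| = \prod_{k=1}^d |\xi - \lambda_k|$, so on the event $\{|P(\xi)| < \varepsilon^d\}$ we have $\prod_{k=1}^d |\xi - \lambda_k| < \varepsilon^d$, which forces $|\xi - \lambda_{k_0}| < \varepsilon$ for at least one index $k_0$ (if every factor were $\geq \varepsilon$ the product would be $\geq \varepsilon^d$). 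Hence
\[
\{|P(\xi)| < \varepsilon^d\} \subseteq \bigcup_{k=1}^d \{|\xi - \lambda_k| < \varepsilon\}.
\]

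Next I would apply a union bound and estimate each term. For each fixed $k$, writing $\lambda_k = t_k + i s_k$, the event $|\xi - \lambda_k| < \varepsilon$ (with $\xi$ real) implies $|\xi - t_k| < \varepsilon$, so
\[
\mathbb{P}(|\xi - \lambda_k| < \varepsilon) \leq \mathbb{P}(|\xi - t_k| < \varepsilon) \leq \sup_{t \in \mathbb{R}} \mathbb{P}(|\xi - t| < \varepsilon) = \mathcal{L}(\xi, \varepsilon).
\]
Summing over $k = 1, \dots, d$ yields $\mathbb{P}(|P(\xi)| < \varepsilon^d) \leq d\,\mathcal{L}(\xi, \varepsilon)$, which is exactly the claim.

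There is essentially no serious obstacle here; the argument is a clean combination of the pigeonhole-type observation on the product of moduli and the definition of the L\'evy concentration function. The only point requiring a small amount of care is the passage from the complex condition $|\xi - \lambda_k| < \varepsilon$ to the real condition $|\xi - \operatorname{Re}\lambda_k| < \varepsilon$, which is legitimate precisely because $\xi$ is real-valued, so that $|\xi - \lambda_k|^2 = (\xi - \operatorname{Re}\lambda_k)^2 + (\operatorname{Im}\lambda_k)^2 \geq (\xi - \operatorname{Re}\lambda_k)^2$; and noting that the bound is trivial when $\mathcal{L}(\xi,\varepsilon) \geq 1/d$, so one may as well assume it is small, though this is not even needed for the stated inequality.
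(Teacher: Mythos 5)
Your argument is exactly the paper's: factor the monic polynomial over $\mathbb{C}$, observe (by the pigeonhole/product argument) that $|P(\xi)|<\varepsilon^d$ forces $|\xi-\lambda_{k_0}|<\varepsilon$ for some root, pass from the complex root to its real part using that $\xi$ is real-valued, and then apply the union bound and the definition of $\mathcal{L}(\xi,\varepsilon)$. The only difference is that you spell out the pigeonhole step and the real-part reduction a bit more explicitly than the paper does; the underlying approach and estimates are identical.
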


\begin{proof} 
Write $P(z)= \prod_{j=1}^d (z-r_j)$, for $r_j$ the (complex) roots of $P$. Then
	\begin{equation}
		\begin{split}
      			\mathbb P \left ( |P(\xi)| < \varepsilon^d \right ) \leq \sum_{j=1}^d \mathbb P (|\xi - r_j| < \varepsilon) \leq 
			\sum_{j=1}^d \mathbb P \left( \left| \xi - {\rm Re}(r_j) \right| < \varepsilon \right)	\leq 		
			d \mathcal{L}(\xi, \varepsilon),
		\end{split} 
	\end{equation}	using the union bound.
\end{proof}

\begin{proposition} [quantitative invertibility] \label{prop:sb-det}
For ${\bf a}=(a_k)_{0\leq k \leq 2n -2}$ a random vector with independent coordinates, and for $A$ the Toeplitz matrix
\begin{equation}
      A = \begin{bmatrix} 
			 a_{n-1} & a_{n-2} & \ldots & a_{0} \\
			 a_{n} & a_{n-1} & \ldots & a_{1} \\
			\vdots & \vdots & \ddots  & \vdots \\
		 a_{2n -2} & a_{2n -3} & \ldots & a_{n-1}
		\end{bmatrix},
\end{equation}
and for any $\varepsilon>0$, 
		\begin{align}
			\mathbb P \left( |\det A|^{1/n} < \varepsilon \right) \leq n {\mathcal L}(a_{n-1}, \varepsilon).
		\end{align}
\end{proposition}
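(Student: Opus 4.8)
The plan is to single out the coordinate $a_{n-1}$, which is exactly the common value sitting on the main diagonal of $A$ (one checks that, with rows and columns indexed by $1,\dots,n$, the entry $A_{ij}$ equals $a_{n-1+i-j}$, so $A_{ij}=a_{n-1}$ precisely when $i=j$), to freeze all the other coordinates, and to regard $\det A$ as a polynomial in the single variable $t:=a_{n-1}$. Expanding the determinant over permutations, the identity contributes $\prod_i A_{ii}=t^n$, while any other permutation $\sigma$ has at most $n-2$ fixed points — a permutation fixing $n-1$ of the $n$ indices fixes all of them — so its summand is a constant (in the frozen data) times $t^{|\mathrm{Fix}(\sigma)|}$ with $|\mathrm{Fix}(\sigma)|\le n-2$. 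Hence, for every fixed choice of the remaining coordinates, $\det A = t^n + (\text{terms of degree} \le n-2 \text{ in } t)$ is a \emph{monic} polynomial of degree exactly $n$ in $a_{n-1}$.

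Next I would condition on the $\sigma$-algebra $\mathcal F$ generated by $(a_k)_{k\neq n-1}$. Given $\mathcal F$, we have $\det A = P(a_{n-1})$ for a monic degree-$n$ polynomial $P$ with $\mathcal F$-measurable coefficients; and, crucially, by the independence hypothesis the conditional distribution of $a_{n-1}$ is its unconditional one, so its conditional L\'evy concentration function is still $\mathcal L(a_{n-1},\cdot)$. Applying Lemma~\ref{small polynomial values} to $P$ and $\xi=a_{n-1}$ in the conditional probability space gives, almost surely,
\[
\mathbb P\big(|\det A| < \varepsilon^n \,\big|\, \mathcal F\big) = \mathbb P\big(|P(a_{n-1})| < \varepsilon^n \,\big|\, \mathcal F\big) \le n\, \mathcal L(a_{n-1}, \varepsilon).
\]
Taking the expectation over $\mathcal F$ and rewriting the event $\{|\det A| < \varepsilon^n\} = \{|\det A|^{1/n} < \varepsilon\}$ finishes the proof.

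The only step that is not entirely mechanical is the monicity/degree computation — confirming that no permutation other than the identity can feed more than $n-2$ factors of $a_{n-1}$ into a diagonal product — but this reduces to the elementary fact that a permutation of $\{1,\dots,n\}$ with $n-1$ fixed points is the identity (the set of non-fixed points cannot have size one). Everything else is the indexing identity $A_{ij}=a_{n-1+i-j}$ together with the standard observation that, under independence, conditioning on the off-diagonal coordinates leaves the anti-concentration of $a_{n-1}$ intact, so that Lemma~\ref{small polynomial values} applies verbatim in the conditional probability space.
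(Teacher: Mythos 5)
Your proof is correct and follows essentially the same strategy as the paper: condition on the off-diagonal coordinates, observe that $\det A$ is then a monic degree-$n$ polynomial in $a_{n-1}$, and apply Lemma~\ref{small polynomial values} conditionally. The only cosmetic difference is that you verify monicity via the Leibniz expansion and the fact that a non-identity permutation has at most $n-2$ fixed points, whereas the paper writes $A = a_{n-1}I + B$ and recognizes $\det A$ as the characteristic polynomial of $-B$ evaluated at $a_{n-1}$, which is monic by definition.
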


\begin {proof} Write 
\begin{equation}
        A = a_{n-1} I + \sum_{k \neq  n-1} a_k B_k
\end{equation}
where each $B_k$ has $1$'s on a single, not the main, diagonal and $0$'s everywhere else. 
The random matrices $a_{n-1} I$ and $B:=\sum_{k \neq n-1}a_k B_k$ are independent. Conditioning on $a_k$ for all $k \neq n-1$,
\begin{equation}
\begin{split}
        \mathbb P \left( |\det A |^{1/n} <\varepsilon \right) 
		&= 
		\mathbb E 
		\left[ 
		\mathbb P\left( | \det(a_{n-1}I + B) | < \varepsilon^n \mid (a_k)_{ k \neq n-1} \right)  
		\right]\\
		&= 
		\mathbb E 
		\left[ 
		\mathbb P \left( |P_{-B} (a_{n-1})| <\varepsilon^n \mid (a_k)_{ k \neq n-1} \right)
		\right]
\end{split}
\end{equation}
for $P_{-B}(\cdot)$ the characteristic polynomial of a matrix $-B$. 
As $P_{-B}$ is monic, apply Lemma \ref{small polynomial values} conditionally to conclude. 
\end{proof}

\subsection{Clustering of zeros} 

Let $P(z) = \alpha_0 + \alpha_1z+ \ldots + \alpha_Nz^N$, $\alpha_N\neq 0$ be a polynomial with roots $z_1, \ldots, z_N$, and let 
	\begin{align}
		\nu_P= \frac{1}{N}\sum_{j=1}^N \delta_{z_j},
	\end{align}
 the normalized zero-counting measure associated with $P$.

For a sequence of polynomials $P_m$, each of degree $m$, we write $\nu_m$ for $\nu_{P_m}$.

The following makes precise the meaning of clustering of zeros around a circle, following Szeg\"o  \cite{S}, 
Rosenbloom \cite{R},  Erd\H os-Tur\'an \cite{ET}, and Edrei \cite{E}.

\begin{definition}[clustering] \label{Clustering defintion}
The zeros of a sequence of polynomials $P_m$, each of degree $m$, cluster uniformly around the circle of radius $r$ if both of the following are satisfied:

\smallskip

\noindent (a) {\it Radial clustering}. For any $\rho>0$, for the annulus 
\begin{align}
R(r,\rho) 
= 
\left\{ 
z\in \mathbb C :  
(1-\rho)r<|z| < (1+\rho)r 
\right\},
\end{align}
we have
\begin{equation}
  \nu_m(R(r,\rho)) \to 1,\quad m\to \infty.
\end{equation}
\noindent (b) {\it Angular clustering}.  For $0 \leq \theta < \phi < 2\pi$, for the angular sector sector
\begin{align}
S(\theta, \phi)= \left\{ z\in \mathbb C : \theta < {\rm Arg}\ z \leq \phi \right\},
\end{align}
we have
\begin{equation}
     \nu_m(S(\theta, \phi)) \to \frac{\phi - \theta}{2\pi}, \quad m\to \infty. 
\end{equation}
\end{definition}

Our goal is to establish such clustering for the polynomial numerators and denominators of Pad\'e approximants of random Taylor series,
and to estimate how much clustering takes place at large but finite degree.

As is well known, both radial and angular clustering of polynomials around the \textit{unit circle} are controlled by comparing the end coefficients to all of the coefficients:
For any polynomial $P(z) = \alpha_0 + \alpha_1z + \ldots + \alpha_Nz^N$ with $\alpha_0 \alpha_N\neq 0$, set
 	\begin{align} \label{The Ratio}
		L(P) : = \frac{ |\alpha_0| + |\alpha_1| + \ldots + |\alpha_N|}{\sqrt{ |\alpha_0 | \cdot |\alpha_N|}}. 
	\end{align} 
Then for radial clustering around the unit circle we have:
\begin{proposition} \label{prop:J2}
For $P(z) = \alpha_0 +\alpha_1z + \ldots + \alpha_Nz^N$ with $\alpha_0\alpha_N\neq 0$ and for $0<\rho \leq 1$,
		\begin{align}
			1- \nu_P(R(1,\rho)) \leq \frac{\log L(P)}{\rho N} . 
		\end{align}
\end{proposition}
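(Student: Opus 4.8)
The plan is to count, using Jensen's formula, how many zeros of $P$ can lie strictly inside the disc of radius $1-\rho$ and, by a symmetric argument applied to the reversed polynomial, how many can lie outside the disc of radius $1/(1-\rho) \geq 1+\rho$; everything left over must sit in the annulus $R(1,\rho)$. Concretely, write $n_P(t)$ for the number of zeros of $P$ in the closed disc of radius $t$. Jensen's formula for the entire function $P$, normalized at $0$ (recall $\alpha_0 = P(0) \neq 0$), gives
\begin{align}
\int_0^{s} \frac{n_P(t)}{t}\, dt = \frac{1}{2\pi}\int_0^{2\pi} \log\bigl| P(s e^{i\phi}) \bigr|\, d\phi - \log|\alpha_0|.
\end{align}

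The first step is to bound the number of zeros in $|z| \leq 1-\rho$ from above. Since $n_P(t)$ is nondecreasing, $\int_{1-\rho}^{1} n_P(t)/t\, dt \geq n_P(1-\rho)\log\frac{1}{1-\rho} \geq n_P(1-\rho)\,\rho$, using $\log\frac{1}{1-\rho} \geq \rho$ for $0 < \rho \leq 1$ (interpreting the $\rho = 1$ case as a limit, or simply restricting to $\rho < 1$ and noting the bound is vacuous at $\rho=1$). Hence
\begin{align}
n_P(1-\rho)\,\rho \leq \int_0^{1} \frac{n_P(t)}{t}\, dt = \frac{1}{2\pi}\int_0^{2\pi}\log\bigl|P(e^{i\phi})\bigr|\,d\phi - \log|\alpha_0|.
\end{align}
The right-hand side is then bounded crudely: $|P(e^{i\phi})| \leq |\alpha_0| + \cdots + |\alpha_N| =: \|\alpha\|_1$, so the integral is at most $\log \|\alpha\|_1$, giving $n_P(1-\rho) \leq \frac{1}{\rho}\log\frac{\|\alpha\|_1}{|\alpha_0|}$.

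The second step handles the zeros far from the origin by applying the same estimate to the reversed polynomial $P^*(z) = z^N P(1/z) = \alpha_N + \alpha_{N-1} z + \cdots + \alpha_0 z^N$, whose zeros are exactly the reciprocals of the zeros of $P$ (here $\alpha_N \neq 0$ ensures $\deg P^* = N$ and $P^*(0) = \alpha_N \neq 0$, and $\|\alpha\|_1$ is unchanged). A zero $z$ of $P$ with $|z| > \frac{1}{1-\rho}$ corresponds to a zero of $P^*$ in $|w| < 1-\rho$, so the number of such zeros is likewise at most $\frac{1}{\rho}\log\frac{\|\alpha\|_1}{|\alpha_N|}$. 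Since $\{|z| \leq 1-\rho\} \cup \{|z| \geq 1/(1-\rho)\}$ covers the complement of $R(1,\rho)$ (because $1/(1-\rho) \geq 1+\rho$ for $0 \leq \rho \leq 1$), adding the two bounds yields
\begin{align}
N\bigl(1 - \nu_P(R(1,\rho))\bigr) \leq \frac{1}{\rho}\log\frac{\|\alpha\|_1}{|\alpha_0|} + \frac{1}{\rho}\log\frac{\|\alpha\|_1}{|\alpha_N|} = \frac{1}{\rho}\log\frac{\|\alpha\|_1^2}{|\alpha_0|\,|\alpha_N|} = \frac{2\log L(P)}{\rho},
\end{align}
which after dividing by $N$ is twice the claimed bound; to get the stated constant $1$, one sharpens the integral estimate by integrating Jensen all the way out, i.e. using $\int_0^{1/(1-\rho)} n_P(t)/t\,dt$ against a single application of Jensen at radius $1/(1-\rho)$, where the same upper bound $\log\|\alpha\|_1$ on $\frac{1}{2\pi}\int \log|P|$ holds only up to a term $N\log\frac{1}{1-\rho}$ that must be tracked carefully, or by observing that the zeros inside $1-\rho$ and outside $1/(1-\rho)$ can be counted together via a single Jensen identity comparing radii $1-\rho$ and $1/(1-\rho)$ and exploiting $\log|\alpha_0| + \log|\alpha_N|$ symmetrically.

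The main obstacle is bookkeeping the constant: the naive two-sided argument loses a factor of $2$, and recovering the clean constant $1$ requires pairing the inner and outer regions in one Jensen identity (radii $s_1 = 1-\rho$ and $s_2 = 1/(1-\rho)$, so that $\log(s_2/s_1) = 2\log\frac{1}{1-\rho} \geq 2\rho$) and noting that on the circle $|z| = s_2$ one has $|P(z)| \leq \|\alpha\|_1 s_2^N$, so the $s_2^N$ factor contributes $N\log\frac{1}{1-\rho}$ which cancels exactly against half of the $N\log(s_2/s_1)$ gained on the left — leaving $\rho N (1-\nu_P(R(1,\rho))) \leq \log\frac{\|\alpha\|_1^2}{|\alpha_0||\alpha_N|} - N\log\frac{1}{1-\rho} \cdot(\text{something})$; this cancellation is the delicate point and is what forces the final constant to be $1$ rather than $2$. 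Everything else — monotonicity of $n_P$, the triangle-inequality bound on $|P|$, and the reversed-polynomial symmetry — is routine.
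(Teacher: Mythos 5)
Your overall strategy---Jensen's formula for $P$ at radius $1$ to bound the zeros close to the origin, combined with the same estimate applied to the reversed polynomial $P^*(z)=z^NP(1/z)$ to handle the far zeros---is indeed the mechanism behind the cited proofs (Rosenbloom, Edrei), so the plan is sound. But the combination step contains a genuine set-theoretic error. You claim that $\{|z|\leq 1-\rho\}\cup\{|z|\geq \tfrac{1}{1-\rho}\}$ \emph{covers} the complement of $R(1,\rho)=\{1-\rho<|z|<1+\rho\}$, ``because $\tfrac{1}{1-\rho}\geq 1+\rho$.'' That inequality yields exactly the opposite containment: $\{|z|\geq \tfrac{1}{1-\rho}\}\subseteq\{|z|\geq 1+\rho\}$, so your union is a proper \emph{subset} of the complement and misses every zero in the shell $1+\rho\leq |z|<\tfrac{1}{1-\rho}$. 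Consequently the displayed inequality $N\bigl(1-\nu_P(R(1,\rho))\bigr)\leq \tfrac{1}{\rho}\log\tfrac{\|\alpha\|_1}{|\alpha_0|}+\tfrac{1}{\rho}\log\tfrac{\|\alpha\|_1}{|\alpha_N|}$ does not follow from the two Jensen bounds, and the argument does not close even with a factor of $2$.

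To actually count zeros in $\{|z|\geq 1+\rho\}$ you must apply the Jensen bound to $P^*$ at the \emph{smaller} radius $\tfrac{1}{1+\rho}=1-\tfrac{\rho}{1+\rho}$, which gives $\tfrac{1+\rho}{\rho}\log\tfrac{\|\alpha\|_1}{|\alpha_N|}$; equivalently, the outer zeros contribute only $\log(1+\rho)\leq\rho$ each to the quantity $\sum_j\bigl|\log|z_j|\bigr|\leq 2\log L(P)$, while the inner zeros contribute $-\log(1-\rho)\geq \rho$. This asymmetry is the real obstacle you flag at the end, and the sketch of ``pairing the two radii in one Jensen identity'' does not remove it: any honest bookkeeping yields a bound of the form $\tfrac{2\log L(P)}{N\log(1+\rho)}$, not $\tfrac{\log L(P)}{N\rho}$. (Note also that the sources you are implicitly following state the radial result for the multiplicatively symmetric annulus $\{1-\rho<|z|<(1-\rho)^{-1}\}$, where both sides contribute $\geq\log\tfrac{1}{1-\rho}$; translating to the additively symmetric $R(1,\rho)$ is where the constant must be tracked carefully.) In short: fix the covering claim first, then carry out the constant-tracking explicitly rather than asserting a cancellation.
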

A proof of this is part of Theorem 1 in \cite[p.\ 268]{R}. Edrei reproves it \cite[p.\ 264]{E}. See also \cite{HN} 
for a more recent take connected to probabilistic considerations independent of Pad\'e approximants. 
All these use the classical formula of Jensen \cite{J} which, for a polynomial $P$ links the measure $\nu_P$ and the average of
$P$ on a circle. For a useful presentation of Jensen's formula, including the case of zeros on the circle, see \cite[\S 4.8]{AN}. 
We state the formula here in a form that will be useful later.

\begin{proposition}[Jensen Formula] \label{JensenForm}
For $f$ holomorphic on the closed disc of radius $r$ and with $f(0) \neq 0$,
 \begin{align} \label{eq:J}
 	\frac{1}{2\pi} \int_0^{2\pi} \log| f(re^{i\theta})| \, d\theta 
	+
	\sum_{j=1}^n \log\frac{|z_j|}{r}
	= 
	\log|f(0)| ,
 \end{align}
 where $z_1, \ldots, z_n$ are the zeros of $f$ in the interior of the disc of radius $r$, repeated according to multiplicity.
\end{proposition}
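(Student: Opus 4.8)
The plan is to run the classical Jensen argument: peel off the interior zeros with Blaschke-type factors and then invoke the mean value property for harmonic functions. First I would treat the case in which $f$ has no zeros on the circle $|z|=r$ itself. Writing $z_1,\dots,z_n$ for the (finitely many) zeros of $f$ in the open disc $|z|<r$, listed with multiplicity, set
\[
	g(z) = f(z)\prod_{j=1}^n \frac{r^2 - \overline{z_j}\,z}{\,r(z-z_j)\,}.
\]
Each rational factor has its only pole at $z_j$, cancelled against the corresponding zero of $f$, and its only zero at $r^2/\overline{z_j}$, which has modulus $r^2/|z_j|>r$ and so lies outside the closed disc; hence $g$ is holomorphic and zero-free on a neighbourhood of $\overline{D}_r$, and therefore $\log|g|$ is harmonic there, being locally the real part of a holomorphic logarithm of $g$.

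Next I would record two elementary computations. On $|z|=r$ we have $r^2 = z\overline{z}$, so $r^2 - \overline{z_j}z = z(\overline{z}-\overline{z_j}) = z\,\overline{(z-z_j)}$, which gives $\bigl|r^2-\overline{z_j}z\bigr| = r\,|z-z_j|$; thus each factor has modulus $1$ on $|z|=r$ and consequently $|g(re^{i\theta})| = |f(re^{i\theta})|$ for every $\theta$. At the origin, $g(0) = f(0)\prod_{j=1}^n(-r/z_j)$, so $\log|g(0)| = \log|f(0)| - \sum_{j=1}^n \log(|z_j|/r)$. Applying the mean value property $\frac{1}{2\pi}\int_0^{2\pi}\log|g(re^{i\theta})|\,d\theta = \log|g(0)|$ and substituting these two identities yields \eqref{eq:J} in this case.

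Finally I would remove the hypothesis that $f$ has no zeros on $|z|=r$ by a limiting argument: apply the case already proved with radius $\rho<r$ and let $\rho\uparrow r$. Every zero with $|z_j|<r$ is eventually counted, so $\sum_{|z_j|<\rho}\log(|z_j|/\rho)\to\sum_{|z_j|<r}\log(|z_j|/r)$, while a zero on $|z|=r$ contributes $\log 1 = 0$ and may be harmlessly included on the right. For the integral I would pass to the limit by dominated convergence: $\log|f(\rho e^{i\theta})|$ is bounded above by $\log\max_{\overline{D}_r}|f|$, and near a boundary zero $re^{i\theta_0}$ of order $k$ the estimate $|\rho e^{i\theta}-re^{i\theta_0}|\geq c\,|\theta-\theta_0|$, valid uniformly for $\rho$ near $r$, yields a $\rho$-independent integrable lower bound $k\log(c|\theta-\theta_0|)+C$. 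I expect this last step to be the only real obstacle, namely making sure the boundary log-singularities are integrable and that the convergence can be passed through; the interior case is a routine manipulation once the Blaschke factors are written down. Alternatively, one may simply invoke the treatment of the boundary case in \cite[\S 4.8]{AN}.
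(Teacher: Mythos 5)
The paper does not actually prove this proposition; it states Jensen's formula as a classical fact and refers to \cite[\S 4.8]{AN} for a treatment covering the case of zeros on the circle $|z|=r$. Your argument supplies the standard Blaschke-factor proof, and it is correct. When $f$ has no zeros on $|z|=r$, the function
\[
	g(z)=f(z)\prod_{j=1}^n\frac{r^2-\overline{z_j}\,z}{r(z-z_j)}
\]
is holomorphic and zero-free on a neighbourhood of the closed disc (the pole at each $z_j$ is cancelled, and the zero at $r^2/\overline{z_j}$ has modulus $r^2/|z_j|>r$), $|g|$ agrees with $|f|$ on $|z|=r$ because $|r^2-\overline{z_j}z|=r|z-z_j|$ there, and $\log|g(0)|=\log|f(0)|-\sum_j\log(|z_j|/r)$; the mean value property for the harmonic function $\log|g|$ then gives \eqref{eq:J}. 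Your limiting argument $\rho\uparrow r$ for the boundary case is also sound: $f$, being holomorphic on a neighbourhood of the closed disc, has only finitely many zeros there, so the finite sum stabilises as soon as $\rho$ exceeds $\max_{|z_j|<r}|z_j|$, and the uniform estimate $|\rho e^{i\theta}-re^{i\theta_0}|\ge c\,|\theta-\theta_0|$ near a boundary zero of order $k$ yields the $\rho$-independent integrable minorant $k\log(c|\theta-\theta_0|)-C$ needed for dominated convergence of $\log|f(\rho e^{i\theta})|$. In short, you have furnished the proof that the paper outsources to the cited reference, and it is complete.
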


As for angular clustering, we have the celebrated Erd\H os-Tur\'an result \cite{ET}, see also \cite{R}. For a more recent treatment see \cite{AB}.

\begin{proposition} [Erd\H os-Tur\'an] \label{prop:ET}
	For $P(z)=\alpha_0 + \alpha_1z+\ldots+\alpha_Nz^N$ with $\alpha_0\alpha_N\neq 0$ and for $0 \leq \theta < \phi < 2\pi$,
		\begin{align}
			\left|  \frac{\phi - \theta}{2\pi} - \nu_P(S(\theta, \phi)) \right| \leq 16 \sqrt{ \frac{\log L(P)}{N}} .
		\end{align}
\end{proposition}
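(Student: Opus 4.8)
This inequality is precisely the Erd\H os--Tur\'an theorem, so the most economical thing to do is to quote \cite{ET} (see also \cite{R}, \cite{AB}); here is the shape of the argument. Write $P(z)=\alpha_N\prod_{j=1}^N(z-z_j)$ with $z_j=r_je^{i\theta_j}$, and observe that $L(P)$ and the numbers $\nu_P(S(\theta,\phi))$ are invariant under $P\mapsto cP$ and under $z\mapsto\lambda z$ with $\lambda>0$, so we may normalise the logarithmic Mahler measure $m(P):=\tfrac1{2\pi}\int_0^{2\pi}\log|P(e^{i\theta})|\,d\theta$ to be $0$. The first step is the purely harmonic-analytic Erd\H os--Tur\'an inequality: squeezing the indicator of the arc $(\theta,\phi]$ between two trigonometric polynomials of degree $K$ whose nonzero Fourier coefficients are $O(1/k)$ (Fej\'er, or Beurling--Selberg, majorants) and testing them against the empirical measure $\tfrac1N\sum_j\delta_{\theta_j}$ yields, for every integer $K\ge1$,
\begin{equation*}
\left|\,\nu_P(S(\theta,\phi))-\tfrac{\phi-\theta}{2\pi}\,\right|\ \le\ \frac{C_1}{K}+C_2\sum_{k=1}^K\frac1k\,|c_k|,\qquad c_k:=\frac1N\sum_{j=1}^N e^{ik\theta_j}.
\end{equation*}

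The crux is to estimate the exponential sums $c_k$, and here Jensen's formula (Proposition \ref{JensenForm}) enters. Expanding $\log|e^{i\theta}-z_j|$ into its Fourier series one factor at a time --- an inside zero contributing $\log|1-\bar z_j e^{-i\theta}|$ and an outside zero $\log|1-z_j^{-1}e^{i\theta}|$ --- gives, for $g(\theta):=\log|P(e^{i\theta})|$ and $k\ge1$,
\begin{equation*}
\widehat g(k)=-\frac1{2k}\sum_{j=1}^N\rho_j^{\,k}\,e^{-ik\theta_j},\qquad \rho_j:=\min\{r_j,\,r_j^{-1}\}\le 1.
\end{equation*}
Applying Jensen's formula to $P$ and to the reciprocal polynomial $z^NP(1/z)$ on the closed unit disc, and combining the elementary bound $m(P)\le\log\sum_\nu|\alpha_\nu|$ with the lower bounds $m(P)\ge\log|\alpha_0|$ and $m(P)\ge\log|\alpha_N|$, gives $\sum_{j=1}^N|\log r_j|\le 2\log L(P)$; this expresses that only few zeros lie far from the unit circle (compare Proposition \ref{prop:J2}). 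Since $\overline{\widehat g(k)}=\widehat g(-k)$ and $0\le 1-\rho_j^{\,k}\le k\log(1/\rho_j)=k|\log r_j|$,
\begin{equation*}
\Big|\,c_k+\tfrac{2k}{N}\,\widehat g(-k)\,\Big|=\frac1N\Big|\sum_{j=1}^N(1-\rho_j^{\,k})e^{ik\theta_j}\Big|\le\frac{k}{N}\sum_{j=1}^N|\log r_j|\le\frac{2k}{N}\log L(P),
\end{equation*}
hence $|c_k|\le\tfrac{2k}{N}\bigl(|\widehat g(k)|+\log L(P)\bigr)$. The one ingredient still missing --- a bound on $\sum_{k=1}^K|\widehat g(k)|$, obtained from Parseval together with a quantitative control of the $L^2$-oscillation $\tfrac1{2\pi}\int_0^{2\pi}\bigl(\log|P(e^{i\theta})|-m(P)\bigr)^2 d\theta$ by $(\log L(P))^2$ --- is the genuinely substantial part of the Erd\H os--Tur\'an argument.

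Granting it, $\sum_{k=1}^K\tfrac1k|c_k|\lesssim\tfrac KN\log L(P)$, so the discrepancy is $\lesssim\tfrac1K+\tfrac KN\log L(P)$; choosing $K\asymp\sqrt{N/\log L(P)}$ balances the two terms and yields the bound $\asymp\sqrt{\log L(P)/N}$ (and when $\log L(P)\ge N/256$ the inequality is trivial, its right-hand side being $\ge1$ while the left-hand side is always $<1$). Tracking the absolute constants through the majorant step, the two Jensen estimates and the optimisation --- exactly as in \cite{ET} --- produces the universal value $16$. I expect the estimate on the $c_k$ to be the real obstacle: any term-by-term bound kills the cancellation in $\sum_j e^{ik\theta_j}$ and in $\widehat g(k)$ and gives nothing useful, so one must combine Jensen's formula (to discard the zeros far from the circle) with the $L^2$-smallness of $\log|P|$ on the circle, and it is the interplay of these two facts that forms the technical core of the theorem.
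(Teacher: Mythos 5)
The paper does not prove Proposition~\ref{prop:ET}; it states it as a classical black-box result with citations to \cite{ET}, \cite{R}, \cite{AB}, which is exactly what you propose to do, so the two approaches coincide. Your accompanying sketch of the Erd\H os--Tur\'an argument is a fair outline (the Fourier-coefficient identity for $\log|P(e^{i\theta})|$ and the bound $\sum_j|\log r_j|\le 2\log L(P)$ are correct), with the acknowledged lacunae --- the $L^2$ control of $\log|P|$ on $\mathbb T$ and the tracking of the constant $16$ --- being precisely the technical core that the cited references supply; since the paper imports the theorem rather than reproving it, citation suffices here.
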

		
For $\mu$ the uniform measure on the unit circle $\mathbb{T} = \{ z \in \mathbb{C}: |z|=1\}$, Proposition \ref{prop:J2} and Proposition \ref{prop:ET} show that 
$\nu_P$ is close to $\mu$ in certain sense. This proximity can be measured in terms of the bounded Lipschitz metric 
 	\begin{align} \label{the metric}
		d_{\rm BL} (\mu, \nu_P)  
		= 
		\sup \left\{ \left| \int f \, d\mu - \int f \, d\nu \right| : f\in B_1 \right \}, 
	\end{align} 
	for $B_1:=\{f: \mathbb{C} \to \mathbb R \mid \|f\|_{\rm BL}:= \|f\|_{\rm Lip} + \|f\|_\infty \leq 1\}$. It is well known that this metric induces 
	weak convergence in the space of probability distributions, c.f. \cite[\S 8.3]{Bog}.
We then have 
 
 \begin{proposition} \label{prop:disc}
 	For $P(z)= \alpha_0 + \alpha_1z +\ldots + \alpha_Nz^N$ with $\alpha_0\alpha_N\neq 0$, 
		\begin{align}
			d_{\rm BL}(\nu_P, \mu) \leq 32 \left( \frac{\log L(P)}{N} \right)^{1/4}.
		\end{align}
\end{proposition}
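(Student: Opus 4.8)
The plan is to deduce Proposition~\ref{prop:disc} from Proposition~\ref{prop:J2} and Proposition~\ref{prop:ET} by splitting a test function $f \in B_1$ into its radial and angular dependence and controlling each piece separately. Fix $\rho \in (0,1]$ to be optimized at the end. First I would write, for any $f \in B_1$,
\[
\left| \int f\, d\nu_P - \int f\, d\mu \right|
\le
\left| \int_{R(1,\rho)} f\, d\nu_P - \int f\, d\mu \right|
+
\left| \int_{R(1,\rho)^c} f\, d\nu_P \right|,
\]
and bound the second term by $\|f\|_\infty \, \nu_P(R(1,\rho)^c) \le \nu_P(R(1,\rho)^c) \le \frac{\log L(P)}{\rho N}$ using Proposition~\ref{prop:J2}. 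So the mass of $\nu_P$ lying away from the unit circle costs only the Jensen bound.

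Next I would handle the part of $\nu_P$ near the circle. For $z$ in the annulus $R(1,\rho)$, write $z = |z| e^{i\theta}$ and compare $f(z)$ to $f(e^{i\theta})$: since $\|f\|_{\rm Lip} \le 1$ and $\bigl| z - e^{i\theta}\bigr| = \bigl||z|-1\bigr| \le \rho$, we get $|f(z) - f(e^{i\theta})| \le \rho$. Hence replacing $f$ by its ``angular projection'' $\tilde f(e^{i\theta}) := f(e^{i\theta})$ on $R(1,\rho)$ introduces an error of at most $\rho$ in the integral against $\nu_P$. It then remains to estimate $\bigl| \int \tilde f \, d(\pi_* \nu_P) - \int \tilde f\, d\mu \bigr|$, where $\pi$ is the radial projection $z \mapsto z/|z|$ onto $\mathbb T$ and $\pi_* \nu_P$ is the pushforward. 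This is a one-dimensional discrepancy estimate on the circle: $\tilde f$ is $1$-Lipschitz and bounded by $1$ on $\mathbb T$, and the Erd\H os--Tur\'an bound (Proposition~\ref{prop:ET}) says the pushed-forward counting measure and $\mu$ agree on all arcs up to $16\sqrt{\log L(P)/N}$. Converting a uniform arc-discrepancy bound of size $\eta$ into a bounded-Lipschitz bound is standard (integrate the layer-cake / Abel-summation identity $\int g\, d(\alpha-\beta) = -\int (\alpha-\beta)(\{g > t\})\, dt$ over level sets, or approximate $\tilde f$ by a step function on $O(1/\eta)$ arcs), and yields $d_{\rm BL}(\pi_*\nu_P, \mu) \le C\sqrt{\log L(P)/N}$ for an absolute constant $C$; chasing the Erd\H os--Tur\'an constant $16$ through gives something like $32\sqrt{\log L(P)/N}$.

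Collecting the three contributions, for every $f \in B_1$,
\[
\left| \int f\, d\nu_P - \int f\, d\mu \right|
\le
\frac{\log L(P)}{\rho N} + \rho + 32\sqrt{\frac{\log L(P)}{N}},
\]
and taking the supremum over $f$ gives the same bound for $d_{\rm BL}(\nu_P,\mu)$. Now optimize in $\rho$: choosing $\rho = \bigl(\log L(P)/N\bigr)^{1/4}$ (which is $\le 1$ since $L(P) \ge 1$ forces $\log L(P)/N$ small in the regime of interest, and in any case the bound is trivial otherwise) balances the first two terms at $\bigl(\log L(P)/N\bigr)^{1/4}$ each, and this dominates the $\sqrt{\cdot}$ term, so the whole expression is at most $32\bigl(\log L(P)/N\bigr)^{1/4}$ after absorbing constants. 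The main obstacle — really the only nonroutine point — is the clean conversion of the Erd\H os--Tur\'an arc-discrepancy estimate into a bounded-Lipschitz estimate on the circle with an explicit constant; everything else is the triangle inequality and Lipschitz continuity. I would relegate that conversion to the Appendix, which the introduction already advertises as doing exactly this.
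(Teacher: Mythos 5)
Your decomposition is correct and takes a genuinely different route from the paper's Appendix~\ref{app:disc}. The paper partitions the annulus $R(1,\rho)$ into $m$ equi-angular polar rectangles, applies Erd\H os--Tur\'an to each, and takes a \emph{union bound} $m \max_j |\nu_P(R_j) - 1/m|\leq 16m\sqrt{\log L(P)/N}$, which costs an extra factor of $m$ and forces the balance $m \asymp (N/\log L(P))^{1/4}$ yielding the exponent $1/4$. Your Abel-summation (Stieltjes integration by parts) step avoids exactly this loss: writing $F(\theta)=\pi_*\nu_P([0,\theta])-\theta/2\pi$, Erd\H os--Tur\'an gives $\|F\|_\infty\leq 16\sqrt{\log L(P)/N}$, and since $F(0)=F(2\pi)=0$ one gets $|\int g\,d(\pi_*\nu_P-\mu)|=|\int_0^{2\pi}g'F|\leq 2\pi\|F\|_\infty$ for any $1$-Lipschitz $g$. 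This is a genuine improvement: it gives $d_{\rm BL}(\pi_*\nu_P,\mu)\leq 32\pi\sqrt{\log L(P)/N}$ (note the constant is $32\pi$, not $32$ as you wrote --- the cost of the circumference), which is a better power than the paper obtains.

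There is, however, a slip in your final optimization. Your three error terms are $L/(\rho N)$, $\rho$, and $O(\sqrt{L/N})$ with $L=\log L(P)$. You claim that $\rho=(L/N)^{1/4}$ ``balances the first two terms at $(L/N)^{1/4}$ each,'' but in fact $L/(\rho N)=(L/N)^{3/4}\ll(L/N)^{1/4}=\rho$, so the second term dominates and nothing is balanced. The correct balance is $\rho=\sqrt{L/N}$, which makes all three terms $O(\sqrt{L/N})$ and gives the \emph{stronger} conclusion $d_{\rm BL}(\nu_P,\mu)\leq C\sqrt{\log L(P)/N}$ with $C=2+32\pi\approx 103$. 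This dominates $32(L/N)^{1/4}$ precisely in the nontrivial regime (when $32(L/N)^{1/4}<2\leq d_{\rm BL}^{\max}$, i.e., $L/N<1/16^4$, one has $C\sqrt{L/N}\leq \tfrac{C}{16}(L/N)^{1/4}\leq 32(L/N)^{1/4}$, and otherwise the proposition is vacuous), so the stated bound follows. In short: once the optimization is corrected, your route is not only valid but sharpens the exponent in the proposition; you may want to state the $\sqrt{\cdot}$ bound directly.
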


For completeness, we include a proof of this proposition in the Appendix \ref{app:disc}.

\subsection{Erd\H os-Tur\'an for Pad\'e numerators and denominators}
We now estimate the Erd\H os-Tur\'an ratio \eqref{The Ratio} in the case of Pad\'e numerators of 
$f(z)=\sum_{j=0}^\infty a_jz^j$ in terms of the coefficients $a_j$. 
Toeplitz matrices inevitably appear and we assume here that their determinants do not vanish. 
As mentioned in Remark \ref{on det}, this assumption will hold in what follows either almost always or with high probability.

From \eqref{PQformulas} the coefficient vectors of the numerator and denominator 
of the $[m,n]$-Pad\'e  of $f$ will be ${\bf p}=(a_0, p_1, \ldots, p_m)$, ${\bf q}=(1,q_1,\ldots,q_n)$ . From \eqref{eq:Toep-1} we have 
	\begin{align} \label{eq:2-1}
		\|{\bf p}\|_1 
		= 
		\|C_m^{(n)} {\bf q} \|_1 
		\leq 
		\|{\bf q} \|_1 \cdot \max_{j\leq n} \|C_m^{(n)} e_j\|_1 
		= 
		\| {\bf q} \|_1 \cdot \sum_{j=0}^m |a_j|.
	\end{align}
Along with the expression for $q_n$ from  \eqref{PQformulas}, this estimates the Erd\H os-Tur\'an ratio of $P_{mn}$ in terms of the ratio of $Q_{mn}$:
	\begin{equation}  \label{PratioQratio}
	\begin{split}
		L(P_{mn}) 
		&\leq 
		\frac{\| {\bf q} \|_1}{\sqrt{|a_0| |q_n|}} 
		\cdot
		 \left( \frac{| \det A^{(n)}_{m+1} | } { |\det A^{(n+1)}_{m}| } \right)^{1/2} 
		 \cdot \sum_{j=0}^m |a_j| \\
		&= 
		\frac{\sum_{j=0}^m |a_j|}{\sqrt{|a_0|}} \cdot 
		 \left( \frac{| \det A^{(n)}_{m+1} | } { |\det A^{(n+1)}_{m}| } \right)^{1/2}
		\cdot L(Q_{mn}).
	\end{split}
	\end{equation}	
On the other hand, the expression for $q_k$ from \eqref{PQformulas} gives 
	\begin{align}
		L(Q_{mn}) 
		= \|{\bf q}\|_1 
		\left( \frac{|\det A_m^{(n)} |}
		{|\det A_{m+1}^{(n)}|} \right)^{1/2} &
		=
		\left(1 + \sum_{k=1}^n 
		\frac{\det(T_m^{(n)}[k+1])}{|\det A_m^{(n)}|} \right)
		\cdot
		 \left( \frac{|\det A_m^{(n)} |}
		{|\det A_{m+1}^{(n)}|} \right)^{1/2}. 
	\end{align}
Then \eqref{PratioQratio} becomes 
	\begin{align}
	L(P_{mn}) &
	\leq \frac{\sum_{j=0}^m |a_j|}{\sqrt{|a_0|}} 
	\cdot 
	\left(1 + \sum_{k=1}^n \frac{\det(T_m^{(n)}[k+1])}{|\det A_n^{(m)}|} \right)
	\cdot 
	\left( \frac{|\det A_m^{(n)} |}{|\det A_{m}^{(n+1)}|} \right)^{1/2} \\
        & \leq  
        \frac{\sum_{j=0}^m |a_j|}{\sqrt{|a_0|}} 
        \frac
        {\sqrt {n \det\left(T_m^{(n)}T_m^{(n)^\ast}\right)}}
        {\left( |\det A_m^{(n)} | \cdot |\det A_{m}^{(n+1)}| \right)^{1/2}},
	\end{align} 
after using the Cauchy-Schwarz inequality to go to the sum of the squares of the sub-determinants and the 
Cauchy-Binet formula for  $\det\left(T_m^{(n)}T_m^{(n)^\ast}\right)$:
\begin{equation}
       \det\left(T_m^{(n)}T_m^{(n)^\ast}\right)
       =
       \sum_{k=0}^n \det\left(T_m^{(n)}[k+1]\right)^2.
\end{equation}
Finally, the arithmetic--geometric mean inequality (applied on the singular values of $T_m^{(n)}$) gives
	\begin{align} \label{eq:2-7}
		L(P_{mn}) 
		&\leq 
		\frac{\sum_{j=0}^m |a_j|}{\sqrt{|a_0|}} 
		\cdot 
		\frac
		{n^{1/2} \left(n^{-1/2} \vertiii{T_m^{(n)} }_1\right)^n }
		{\left( |\det A_m^{(n)} | \cdot |\det A_{m}^{(n+1)}| \right)^{1/2}},	
	\end{align} 
where $\vertiii{T_m^{(n)} }_1 := \sum_{j=1}^{n+1} \|T_m^{(n)}(e_j) \|_1$.

\section{Non-asymptotic clustering of roots} \label{non-as}

Given a random power series of the form
\begin{equation}
        f(z)= \sum_{j=0}^\infty a_j z^j, 
        \quad 
        a_j:(\Omega, \mathbb{P}) \to \mathbb{C}
        \ \text{random variables},
\end{equation}
it is clear from \eqref{eq:Toep-1} and \eqref{eq:Toep-2} that calculating the $[m,n]$-Pad\'e approximant of 
$f$ is the same as caclulating the $[m,n]$-Pad\'e approximant of 
$f_N = \sum_{j=0}^N a_j z^j$ for any $N\geq m +n$.  As our focus in this section is on non-asymptotic results, 
we work here with $f_N$. We deal with full power series in the next section. 
The following is our main probabilistic, non-asymptotic estimate for the Erd\H os-Tur\'an ratio of the numerator 
of the Pad\'e approximants when the $a_j$'s are independent random variables. 
It is applied when $ {\mathcal L}(a_j, \varepsilon)\leq K \varepsilon$ in Theorem \ref{thm:adv} and we show its use for discrete distributions in Example \ref{example}. 

\begin{theorem} [Bound on the Erd\H os-Tur\'an ratio] \label{thm:main-1}	
Let  $m,n\in \mathbb N$, $1 \leq \gamma<\infty $, and $\varepsilon$ and $b$ in $(0,1)$.

Then for any $N\geq m+n$ and $(a_j)_{j=0}^N$ random vector  with independent coordinates, $\mathbb E|a_j| \leq \gamma$  and ${\mathcal L}(a_j ,\varepsilon)\leq b$ for all $j$, the numerator $P_{mn}$ of the $[m,n]$-Pad\'e approximant of $f_N(z) = \sum_{k=0}^N a_jz^j$ satisfies 
	\begin{align} \label{eq:control-ET}
		\mathbb P \left(  \left\{ \log L(P_{mn}) > \log m + 2n\log \left(\frac{4\gamma}{b\varepsilon}\right) \right\}  \right) \leq 5nb.
	\end{align} 
\end{theorem}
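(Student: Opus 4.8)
The plan is to start from the deterministic bound \eqref{eq:2-7}, which controls $L(P_{mn})$ by an expression built from $\sum_{j=0}^m|a_j|$, the norm $\vertiii{T_m^{(n)}}_1$, and the two Toeplitz determinants $|\det A_m^{(n)}|$ and $|\det A_m^{(n+1)}|$ in the denominator. The strategy is then a union bound over four ``bad'' events: (i) the numerator sum $\sum_{j=0}^m|a_j|$ is too large; (ii) $\vertiii{T_m^{(n)}}_1$ is too large; (iii) $|\det A_m^{(n)}|^{1/n}$ is too small; and (iv) $|\det A_m^{(n+1)}|^{1/(n+1)}$ is too small. On the complement of all four, \eqref{eq:2-7} yields the claimed upper bound on $\log L(P_{mn})$, after taking logarithms and collecting the $n\log(\cdot)$ terms.

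For the determinant events (iii) and (iv) I would apply Proposition \ref{prop:sb-det}: both $A_m^{(n)}$ and $A_m^{(n+1)}$ are square Toeplitz matrices whose entries are among the independent $a_j$'s with at most $2n-1$ (resp.\ $2n+1$) distinct coefficients involved, so the proposition gives $\mathbb P(|\det A_m^{(n)}|^{1/n}<\varepsilon)\le n\,\mathcal L(a_m,\varepsilon)\le nb$ and similarly $\mathbb P(|\det A_m^{(n+1)}|^{1/(n+1)}<\varepsilon)\le (n+1)\,b$; one must only check that the relevant diagonal entry has L\'evy bound $b$, which holds by hypothesis for every $a_j$. For events (i) and (ii), the natural tool is Markov's inequality together with $\mathbb E|a_j|\le\gamma$: $\mathbb E\sum_{j=0}^m|a_j|\le(m+1)\gamma$ and $\mathbb E\vertiii{T_m^{(n)}}_1\le(n+1)\cdot n\gamma$ (each of the $n+1$ columns of $T_m^{(n)}$ has $n$ entries, each of expected absolute value $\le\gamma$), so choosing thresholds of the form $(m+1)\gamma/(nb)$ and $n(n+1)\gamma/(nb)$ makes each of these events have probability $\le nb$. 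Summing, $5nb$ then accounts for $nb+(n+1)b+nb+nb\le 5nb$ (using $n\ge1$).

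The slightly delicate bookkeeping step, which I expect to be the main obstacle, is making the arithmetic work out so that the surviving deterministic bound simplifies exactly to $\log m+2n\log(4\gamma/(b\varepsilon))$ rather than something messier. On the good event, \eqref{eq:2-7} gives roughly
\[
L(P_{mn})\;\le\;\frac{(m+1)\gamma/(nb)}{\sqrt{|a_0|}}\cdot\frac{n^{1/2}\bigl(n^{-1/2}\cdot n(n+1)\gamma/(nb)\bigr)^n}{(\varepsilon^n\cdot\varepsilon^{n+1})^{1/2}},
\]
and one has to absorb the powers of $n$, the factor $1/\sqrt{|a_0|}$ (here a lower bound on $|a_0|$ is needed — note $a_0$ itself should be bounded away from $0$ on the good event, which one can fold into event (iii)/(iv) or handle by noting $|a_0|\ge\varepsilon$ on a further event of probability $\le b\le nb$, adjusting the constant), and the constant $\gamma/b$ into the stated clean form. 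The key inequalities are $n^{-1/2}\cdot n(n+1)/n=n^{-1/2}(n+1)\le 2\sqrt n$ (for $n\ge1$, since $n+1\le 2n$), and bounding $(m+1)\le 2m$, $2\sqrt{n}\cdot\sqrt n=2n$, so that the product of the numerator constants contributes $(2\gamma/b)^n$ up to polynomial-in-$n$ factors that get swallowed by enlarging $4$ past $2$; taking logs then gives $\log L(P_{mn})\le \log(2m)+ n\log(2\gamma/(b))\cdot(\text{small adjustments}) + n\log(1/\varepsilon^2)+\dots\le \log m+2n\log(4\gamma/(b\varepsilon))$ once the polynomial-in-$n$ slack and the $1/\sqrt{|a_0|}$ term are absorbed into the base $4$. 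Care is needed that all such absorptions are valid for every $n\ge1$ and $m\ge1$ simultaneously, which is where I'd spend most of the effort; everything else is a routine union bound.
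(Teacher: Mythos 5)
Your proposal is correct and matches the paper's proof almost exactly: the same starting bound \eqref{eq:2-7}, the same bad events (sum of $|a_j|$'s, $\vertiii{T_m^{(n)}}_1$, the two Toeplitz determinants, and $|a_0|\geq\varepsilon$) controlled by Markov's inequality and Proposition \ref{prop:sb-det}, and the same threshold scale $t\asymp 1/(nb)$ in the union bound, with the polynomial-in-$n$ factors and $1/\sqrt{|a_0|}$ absorbed into the base $4\gamma/(b\varepsilon)$. The only cosmetic difference is that the paper merges the two determinant conditions into a single event $\{|\det A_m^{(n)}|\cdot|\det A_m^{(n+1)}|<\varepsilon^{2n}\}$ before applying Proposition \ref{prop:sb-det} twice.
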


\begin{proof}
Take $5nb <1$, as otherwise there is nothing to prove. 	
	In view of the estimate \eqref{eq:2-7}, consider the following events:
	\begin{align}
		{\mathcal E}_0:=&\{|a_0| < \varepsilon \}, \\
		{\mathcal E}_1:=& \left\{ \sum_{j=0}^m |a_j| > t(m+1)\gamma \right\}, \quad 	 {\mathcal E}_2:= \left\{ \vertiii{T_{m}^{(n)} }_1 > tn(n+1) \gamma \right\} \\
		{\mathcal E}_3:=& \{  |\det A_m^{(n)} | \cdot |\det A_{m}^{(n+1)}| < \varepsilon^{2n}\},
	\end{align} 
where $t>0$ will be suitably chosen below. 
Note that 
		\[
			P({\mathcal E}_0) \leq b, \quad \mathbb P({\mathcal E}_3) \leq n{\mathcal L}(a_m,\varepsilon) + (n+1){\mathcal L}(a_m,\varepsilon)   \leq 3nb,
		\] 
by the assumption on $\mathcal{L}$ and Proposition \ref{prop:sb-det}, and that  
		\[
			\mathbb P({\mathcal E}_1\cup {\mathcal E}_2) \leq \frac{2}{t},
		\] by Markov's inequality. Therefore in the complement of $\bigcup_{i=0}^3 {\mathcal E}_i$ we have 
	\[
	 	L(P_{mn}) \leq \frac{2t m \gamma n^{1/2} }{\varepsilon} \cdot \left(\frac{2t \gamma n^{3/2} }{\varepsilon} \right)^n, 
	\] 
with probability greater than $1-b-3nb -2/t$.
For $t\geq \dfrac{2}{nb}$ this gives
\begin{equation}
      L(P_{mn}) \leq  m\left( \frac{2n t \gamma}{\varepsilon}\right)^{2n},
\end{equation}	
	with probability greater than $1-5nb$.
In particular, the choice $t= 2/(nb)$ yields 
	\begin{align}
		L(P_{mn}) \leq m \left( \frac{4 \gamma }{b\varepsilon}\right)^{2n},
	\end{align} with probability greater than $1-5nb$, as claimed. 
\end{proof}

The following theorem holds with the same integrability conditions as in Theorem \ref{thm:main-1}. We present it here for isotropic random vectors, a standard class of random vectors in high-dimensional probability. Recall that a random vector is called {\it isotropic} if it is centered and its covariance matrix is the identity, see e.g., \cite[Definition 2.3.7]{BGVV}. 
Applying Theorem \ref{thm:main-1} to this case we have
\begin{theorem} \label{thm:adv}
	Let $m,n\in \mathbb N$, $K\geq 1$ and $\delta\in (0,1)$. If $m,n$ satisfy
		\begin{equation}\label{therange}			
			m\geq C\delta^{-4}\, n\, \log (eKn/ \delta),
		\end{equation} then for any $N\geq m+n$, and for any 
	isotropic random vector ${\bf a}= (a_j)_{j=0}^N \in \mathbb R^{N+1}$ with independent coordinates, and
	\begin{equation} \label{themainanticoncentration}
		 {\mathcal L}(a_j, \varepsilon)\leq K \varepsilon,\quad \text{for all $j$ and $\varepsilon >0$},
	\end{equation}
the numerator $P_{mn}$ of the $[m,n]$-Pad\'e approximant of $f_N(z) = \sum_{j=0}^N a_jz^j$ satisfies
	\begin{equation}
				\mathbb P \left( \frac{\log L(P_{mn})}{m} >\delta^4\right) <\delta.
	\end{equation} 
In particular, for $\mu$ the uniform measure on the unit circle we have
		\begin{align}
			\mathbb P \left( d_{\rm BL} (\nu_{P_{mn}}, \mu) > 40 \delta \right) < \delta.
		\end{align}
\end{theorem}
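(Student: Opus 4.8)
The plan is to obtain both assertions directly from Theorem \ref{thm:main-1} and Proposition \ref{prop:disc}, the point being to specialise the free parameters $\gamma,b,\varepsilon$ of Theorem \ref{thm:main-1} so that the range condition \eqref{therange} is exactly what turns the tail bound \eqref{eq:control-ET} into $\log L(P_{mn})\le \delta^4 m$. First I would record the inputs needed for Theorem \ref{thm:main-1}: since ${\bf a}$ is isotropic, each coordinate is centered with $\mathbb E a_j^2=1$, so $\mathbb E|a_j|\le(\mathbb E a_j^2)^{1/2}=1$ and one may take $\gamma=1$; and for the anti-concentration parameter I would apply \eqref{themainanticoncentration} at a scale $\varepsilon$ of order $\delta/(nK)$ --- concretely $\varepsilon:=\delta/(6nK)$ --- and set $b:=K\varepsilon=\delta/(6n)$, so that $\varepsilon,b\in(0,1)$ (as $\delta<1$ and $n,K\ge1$) and $5nb=\tfrac56\delta<\delta$.

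With these choices $b\varepsilon=K\varepsilon^2$ has order $\delta^2/(n^2K)$, so $\log\!\big(4\gamma/(b\varepsilon)\big)=\log\!\big(4/(K\varepsilon^2)\big)$ is at most $c_1\log(eKn/\delta)$ for an absolute constant $c_1$, as one checks term by term using $n,K\ge1$ and $\delta<1$. Feeding this into \eqref{eq:control-ET}, with probability greater than $1-\delta$ one has
\[
\log L(P_{mn})\ \le\ \log m+2c_1\, n\log(eKn/\delta).
\]
Now \eqref{therange} gives $n\log(eKn/\delta)\le \delta^4 m/C$, so the second term is $\le\tfrac12\delta^4 m$ once $C\ge 4c_1$; and since \eqref{therange} together with $n,K\ge1$ already forces $m\ge C\delta^{-4}\log(e/\delta)$, the elementary fact that $x\mapsto x-2t\log x$ is increasing for $x>2t$ (used with $t=\delta^{-4}$) gives $\log m\le\tfrac12\delta^4 m$ for $C$ a large enough absolute constant. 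Adding the two bounds, $\log L(P_{mn})\le\delta^4 m$ with probability greater than $1-\delta$, which is the first assertion. For the second, I would note that \eqref{themainanticoncentration} forces $\mathbb P(a_0=0)=0$, while \eqref{themainanticoncentration} and Proposition \ref{prop:sb-det} (letting $\varepsilon\to0$ there) force $\mathbb P(\det A_m^{(n)}=0)=\mathbb P(\det A_m^{(n+1)}=0)=0$; hence by \eqref{PQformulas}, almost surely $P_{mn}$ is a genuine polynomial of degree $m$ with nonzero constant term $a_0$, so Proposition \ref{prop:disc} applies and gives, almost surely, $d_{\rm BL}(\nu_{P_{mn}},\mu)\le 32\big(\log L(P_{mn})/m\big)^{1/4}$. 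Combining with the first assertion, the event $\{d_{\rm BL}(\nu_{P_{mn}},\mu)>40\delta\}$ is contained, up to a null set, in $\{\log L(P_{mn})/m>\delta^4\}$, which has probability less than $\delta$.

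The only genuine work here is the bookkeeping of constants, and that is the step I would be most careful about: fixing $\varepsilon,b$ so that simultaneously $5nb<\delta$ and $\log(4/(K\varepsilon^2))$ stays comparable to $\log(eKn/\delta)$, and then choosing the absolute constant $C$ in \eqref{therange} large enough to absorb both the $2c_1\, n\log(eKn/\delta)$ term and the leftover $\log m$ term into $\delta^4 m$ --- the latter being legitimate precisely because the factor $\log(eKn/\delta)\ge\log(e/\delta)$ in \eqref{therange} already forces $m$ to be large relative to $\delta^{-4}$. Beyond this there is no analytic difficulty: everything follows from Theorem \ref{thm:main-1}, Proposition \ref{prop:sb-det} and Proposition \ref{prop:disc} used as black boxes.
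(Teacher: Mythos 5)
Your proof is correct and follows essentially the same route as the paper: specialize Theorem \ref{thm:main-1} with $\gamma=1$, $\varepsilon$ of order $\delta/(Kn)$, and $b$ of order $\delta/n$, then absorb both the $\log m$ term and the $2n\log(\ldots)$ term into $\delta^4 m$ via the range condition \eqref{therange}, and finish with Proposition \ref{prop:disc}. The one small thing you add that the paper leaves implicit is the check that $a_0\neq 0$ and $\det A_m^{(n)},\det A_m^{(n+1)}\neq 0$ almost surely, so that $P_{mn}$ genuinely has degree $m$ with nonzero constant term and Proposition \ref{prop:disc} is applicable.
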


\begin{proof} Isotropicity implies that in applying Theorem \ref{thm:main-1} we can take $\gamma=1$, as $\mathbb E|a_j| \leq (\mathbb E|a_j|^2)^{1/2} =1$. 
If we take $\varepsilon = {\delta}/{(5K n)}$ we see that $b= \delta/(5n)$ in Theorem \ref{thm:main-1} will do. Then \eqref{eq:control-ET} becomes 
	\begin{equation}
			 	\mathbb P\left( \frac{\log L(P_{mn})}{m} 
				> 
				\frac{\log m}{m} + \frac{ 2 n \log (100 K  n^2/ \delta^2)}{m}\right) 
				<\delta.
	\end{equation}
Finally, observe that
there is a universal\footnote{For example, $C = e^\theta$ for $\theta \geq 5$ satisfying $e^\theta > \theta + 20$ will do.}  constant $C>0$, such that
for  $m \geq C \delta^{-4} n\log (e K n / \delta)$ we have
\begin{equation}
      \delta^4 > \frac{\log m}{m} + \frac{ 2 n \log (100 K n^2/ \delta^2)}{m},
\end{equation} 
so that for $m$ and $n$ in the same range
\begin{equation}\label{d4d}
				\mathbb P \left( \frac{\log L(P_{mn})}{m} >\delta^4\right) <\delta.
\end{equation} 
		For the last statement, combine \eqref{d4d}  with Proposition \ref{prop:disc}. 
\end{proof}		
		 The (static) estimate on the probability of failure in Theorem \ref{thm:main-1} is meaningful even for discrete distributions, provided that they have
		sufficient anti-concentration bounds, as the following example shows:			

\begin{example} \label{example} 
Fix $M$ a large positive integer and take the random variables $a_j$ to have common distribution supported 
uniformly on $\{0, \pm 1,\ldots, \pm M\}$, so that $\mathcal L(a_j, 1/2)  \asymp 1/M $ and $\mathbb E|a_j| \asymp M$. 
Then estimate \eqref{eq:control-ET} yields
		\begin{align*}
			\log L(P_{mn})  \leq \log m + C_1n \log(C_2nM),
		\end{align*}
and the clustering of the Pad\'e numerator at $|z| =1$ is measured by
		\begin{align*} 
    			\frac{\log m}{m} +\frac1m C_1n\log(C_2n M),
		\end{align*}
with probability greater than $1- cn/M$. For $n$ small compared to $M$, e.g.\  for $n\sim  \sqrt{M}$, clustering will be manifested 
whenever $m$ is sufficiently large, and in particular much larger than $n\log M$. 
\end{example}

\section{Asymptotic clustering of poles} \label{as}

We now turn our attention to the poles of Pad\'e approximants of random power series. 
For random $f_\omega(z) = \sum_{j = 0}^\infty a_j(\omega) z^j$ we shall apply on $1/f_\omega$, and separately for almost any 
$\omega$, Edrei's deterministic result for the clustering of Pad\'e numerators of meromorphic functions  
on circles defined by the distance of the poles from the orgin, \cite{E}.  
This will determine the clustering of the denominator $Q_{mn}$ via the elementary property
\begin{equation}
\begin{split}
     P_{nm}^{(1/f)} = Q_{mn}^{(f)},
\end{split}
\end{equation} 
where we use superscripts  to indicate the power series that the Pad\'e approximates,
see \cite[p.\ 216, Theorem 22(i)]{Gra} or \cite[p.\ 216]{Gil78}. 
For this we show first as Theorem \ref{radius} that under our assumptions the power series has almost always 
radius of convergence $1$ and that the unit circle is in fact a natural boundary. Then, after a few preliminary facts, we establish in Theorem \ref{roots} that
the random power series $f$ has zeros in the open unit disc $\mathbb{D}$ with high probability and gather information for the location of these zeros. 
We conclude by applying Edrei's result in the last subsection.

\subsection{Natural boundary}

Recall first the following:

\begin{definition} [Natural Boundary]
Let $r_f< \infty$ be the radius of convergence of the power series $f(z) = \sum_{j =0}^\infty a_j z^j$.
Then $|z| = r_f$ is the natural boundary of $f$ if $f$ cannot be extended to a holomorphic function through any arc of this circle.
\end{definition}

\begin{theorem} \label{radius}
Let $(a_j)_{j=0}^\infty$ be independent random variables with $\mathbb E|a_j| \leq \gamma < \infty$, 
and $\liminf_j {\mathcal L}(a_j , \varepsilon) < 1$ for some $\varepsilon > 0$. Then, the random power series 
$f(z) = \sum_{j =0}^\infty a_j z^j$ almost surely has radius of convergence  $1$ and the unit circle is almost surely a natural boundary for $f$.
\end{theorem}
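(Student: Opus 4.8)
The plan is to handle the two assertions separately, starting with the radius of convergence. For the radius of convergence, I would show almost surely $\limsup_j |a_j|^{1/j} = 1$. The upper bound $\limsup_j |a_j|^{1/j} \le 1$ is a routine consequence of $\mathbb E|a_j| \le \gamma$: by Markov's inequality $\mathbb P(|a_j| > \gamma t^{-j}) \le t^j$ for any fixed $t \in (0,1)$, and since $\sum_j t^j < \infty$, Borel--Cantelli gives $|a_j| \le \gamma t^{-j}$ eventually, hence $\limsup_j |a_j|^{1/j} \le 1/t$; letting $t \uparrow 1$ over a countable sequence finishes this direction. For the lower bound $\limsup_j |a_j|^{1/j} \ge 1$, I would use the anti-concentration hypothesis: since $\liminf_j {\mathcal L}(a_j,\varepsilon) < 1$, there is $\eta > 0$ and an infinite set $J$ of indices with ${\mathcal L}(a_j,\varepsilon) \le 1-\eta$ for $j \in J$, which in particular means $\mathbb P(|a_j| \ge \varepsilon) \ge \eta$ for $j \in J$ (taking $t=0$ in the sup defining $\mathcal L$, after noting $\mathbb P(|a_j| < \varepsilon) \le {\mathcal L}(a_j,\varepsilon)$). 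These events are independent and their probabilities do not sum to a finite value, so by the second Borel--Cantelli lemma, almost surely $|a_j| \ge \varepsilon$ for infinitely many $j \in J$, giving $\limsup_j |a_j|^{1/j} \ge \limsup_{j \in J} \varepsilon^{1/j} = 1$. Combining, the radius of convergence is almost surely $1$.

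For the natural boundary claim, I would invoke the Ryll-Nardzewski theorem as the paper signals (the reference to \cite[p.\ 41]{K}): the key structural fact is that the event ``the unit circle is a natural boundary for $f$'' is a tail event with respect to the sequence $(a_j)$ — extendability through an arc is unaffected by changing finitely many coefficients — so by Kolmogorov's $0$--$1$ law it has probability $0$ or $1$. It therefore suffices to rule out probability $0$, i.e.\ to show the probability of extendability is not $1$; equivalently, by the dichotomy, it suffices to show there is positive probability (in fact one shows it cannot be that $f$ is a.s.\ extendable through some arc). The cleaner route, which I expect is what the authors use, is Ryll-Nardzewski's theorem itself: it states that if the $a_j$ are independent and the series has radius of convergence $1$ a.s., then the circle of convergence is a.s.\ a natural boundary \emph{provided} a non-degeneracy condition holds ensuring the $a_j$ are ``genuinely random'' — and the hypothesis $\liminf_j {\mathcal L}(a_j,\varepsilon) < 1$ is precisely designed to supply this: infinitely many coordinates fail to be concentrated at a point, so the conditional distributions cannot conspire to produce an analytic continuation across any arc.

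The main obstacle is the natural-boundary half: one must be careful that Ryll-Nardzewski's classical statement is typically phrased for symmetric or identically distributed coefficients, whereas here the $a_j$ are merely independent with a uniform first-moment bound and a Lévy-function bound along a subsequence. The work is to verify that the hypotheses of the version of Ryll-Nardzewski in \cite{K} are met — in particular that $\liminf_j {\mathcal L}(a_j,\varepsilon) < 1$ implies the requisite non-degeneracy, and that one may restrict attention to the infinite subsequence $J$ where anti-concentration holds without losing the tail/independence structure. Once that translation is pinned down, the rest (the radius of convergence computation above, and the reduction of the natural-boundary event to a tail event) is routine.
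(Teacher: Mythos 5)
Your treatment of the radius of convergence is correct and essentially identical to the paper's: Markov plus the first Borel--Cantelli lemma for the upper bound, and the observation that $\mathbb P(|a_j|\geq\varepsilon)\geq 1-{\mathcal L}(a_j,\varepsilon)$ together with the second Borel--Cantelli lemma for the lower bound.

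The natural-boundary half, however, has a genuine gap, and you more or less flag it yourself when you say the remaining ``work is to verify that the hypotheses of the version of Ryll--Nardzewski in \cite{K} are met.'' You are also misremembering the shape of the theorem. The Ryll--Nardzewski statement the paper uses is not ``radius of convergence $1$ a.s.\ plus non-degeneracy implies natural boundary''; it is a \emph{dichotomy}: either the circle of convergence of $f$ is a.s.\ a natural boundary, or there exists a \emph{deterministic} power series $g(z)=\sum_j b_j z^j$ such that $f-g$ has strictly larger radius of convergence $r_{f-g}>1$. The substance of the proof is then to rule out the second alternative, and this is where the full strength of the Lévy-function hypothesis (the supremum over all shifts $t$, not just $t=0$) is finally used. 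If $r_{f-g}>1$, then $\sum_j(a_j-b_j)z^j$ converges on $|z|=1$, hence $a_j-b_j\to 0$ a.s., hence in probability. But for every $j$ and every deterministic $b_j$ one has $\mathbb P(|a_j-b_j|<\varepsilon)\leq {\mathcal L}(a_j,\varepsilon)$, so $\limsup_j\mathbb P(|a_j-b_j|\geq\varepsilon)\geq 1-\tau>0$ where $\tau=\liminf_j{\mathcal L}(a_j,\varepsilon)$ --- a contradiction. Your proposal never mentions the deterministic centering sequence $(b_j)$, so it does not actually discharge the alternative branch; without that, neither the Kolmogorov $0$--$1$ reduction nor a vague ``non-degeneracy'' appeal closes the argument. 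Note also that the $0$--$1$ law route is strictly weaker here: it tells you the probability of a natural boundary is $0$ or $1$, but gives you no leverage to exclude $0$; the dichotomy form of Ryll--Nardzewski is what converts the problem into a concrete statement about coefficients that your hypothesis can defeat.
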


\noindent {\it Proof.} Let $\tau = \liminf_j {\mathcal L} (a_j, \varepsilon)$. In particular, we have
	\[
		\limsup_j \mathbb P(|a_j| \geq \varepsilon) \geq 1-\tau > 0
		\quad \Longrightarrow 
		\quad
		\sum_{j=0}^\infty  \mathbb P(|a_j| \geq \varepsilon) = \infty.
	\] 
Then, the second Borel-Cantelli lemma yields that $\{ |a_j|^{1/j} \geq 1 \; \textrm {i.o.} \}$ is a sure event. That is, the radius of convergence
$r_f$ of $f$ satisfies $r_f \leq 1$ a.s. For the converse inequality we just observe that, by Markov's inequality, we have 
		\[
			\sum_{j=0}^\infty \mathbb P \left( |a_j| > (1+\delta)^j \right) \leq \gamma \sum_{j=0}^\infty (1+\delta)^{-j}< \infty,
		\]
and the result follows from the first Borel-Cantelli lemma.

To show that the unit circle is a natural boundary, recall that 
the Ryll-Nardzewksi theorem \cite{R-N} guarantees 
that\footnote{The existence of the natural boundary also follows from \cite{A}. We choose
to argue via the Ryll-Nardzewksi theorem as it is available, with a very direct proof, in \cite[p.\ 41]{K}.} if that were not the case
there would exist deterministic power series $g(z) = \sum_{j=0}^\infty b_j z^j$ such that $f-g$ would have radius of convergence $r_{f-g} > 1$ 
and the circle of this radius would be a natural boundary of $f-g$.
In particular, almost surely for all $z\in \mathbb C$ with $ |z|=1$ we would have  
\begin{equation}
      \sum_{j=0}^\infty (a_j(\omega) - b_j) z^j < \infty,
\end{equation} 
so that 
\begin{equation}
      a_j(\omega) - b_j \to 0,
\end{equation}
with probability 1. As almost sure convergence implies convergence in probability, this
contradicts the fact 
	\[
		\limsup_ j \mathbb P \left( \omega : | a_j(\omega) - b_j| \geq  \varepsilon \right) \geq 1- \tau > 0.
	\]
The proof is complete. \prend

\begin{remark} The almost sure existence of a natural  boundary appears often in the study of random power series. 
For symmetric random variables see \cite{R-N}. Our conditions in Theorem \ref{radius} are more relaxed than those in \cite{BS} where the $a_j$'s are independent, 
$\sup_{j} |a_j(\omega)| <M <\infty$ almost surely, and $\limsup_j \text{\rm Var}[a_j] >0$, see \cite[Theorem 6.1]{BS}. This follows from:
	\begin{fact}
		Let $\xi$ be a random variable with ${\rm Var}[\xi] \geq \theta >0$ and let $\mathbb E|\xi |^4 = \sigma_4<\infty$. Then
		\[
			{\mathcal L}\left( \xi,\sqrt{\theta/2} \right) < 1 - c\frac{\theta^2}{\sigma_4},
		\] where $c>0$ is a universal constant.
	\end{fact}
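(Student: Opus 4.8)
The plan is to prove the Fact, which is the self-contained statement that closes the excerpt. The goal is a \emph{reverse} anti-concentration bound: a random variable with bounded fourth moment and variance bounded below cannot be too concentrated on any short interval. The natural route is the Paley--Zygmund inequality applied to the centered variable $\eta := \xi - \mathbb E\xi$, for which $\mathbb E\eta^2 = \mathrm{Var}[\xi] \geq \theta$ and $\mathbb E\eta^4 \leq \mathbb E(|\xi| + |\mathbb E\xi|)^4 \lesssim \mathbb E|\xi|^4 = \sigma_4$ (using $|\mathbb E\xi| \le (\mathbb E|\xi|^4)^{1/4}$ and convexity; this absorbs into the universal constant $c$).

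First I would record Paley--Zygmund: for $\lambda \in (0,1)$,
\[
\mathbb P\bigl(\eta^2 > \lambda\, \mathbb E\eta^2\bigr) \geq (1-\lambda)^2 \frac{(\mathbb E\eta^2)^2}{\mathbb E\eta^4}.
\]
Taking $\lambda = 1/2$ and using $\mathbb E\eta^2 \geq \theta$ on the left together with $(\mathbb E\eta^2)^2 / \mathbb E\eta^4 \geq \theta^2 / \sigma_4'$ (where $\sigma_4'$ is the rescaled fourth moment of $\eta$) gives
\[
\mathbb P\bigl(|\eta| > \sqrt{\theta/2}\bigr) \geq \frac{c\,\theta^2}{\sigma_4}
\]
for a universal $c>0$. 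The second step is to convert this tail bound on $|\eta| = |\xi - \mathbb E\xi|$ into a L\'evy-concentration bound. For any $t \in \mathbb R$, the interval $(t - \sqrt{\theta/2},\, t + \sqrt{\theta/2})$ has length $\sqrt{2\theta}$, so it cannot contain both $\mathbb E\xi - \sqrt{\theta/2}$-far points and $\mathbb E\xi + \sqrt{\theta/2}$-far points on the same side; more precisely, at least one of the half-lines $\{\xi \leq \mathbb E\xi - \sqrt{\theta/2}\}$ or $\{\xi \geq \mathbb E\xi + \sqrt{\theta/2}\}$ is disjoint from $(t-\sqrt{\theta/2}, t+\sqrt{\theta/2})$ — in fact, by a short case analysis on the position of $t$ relative to $\mathbb E\xi$, the interval of radius $\sqrt{\theta/2}$ about any $t$ misses at least one of these two half-lines entirely. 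Hence $\mathbb P(|\xi - t| < \sqrt{\theta/2})$ is at most $1$ minus the probability of that missed half-line, and since the two half-lines together carry mass $\geq c\theta^2/\sigma_4$, one of them carries mass $\geq \tfrac12 c\theta^2/\sigma_4$, yielding $\mathcal L(\xi, \sqrt{\theta/2}) \leq 1 - c' \theta^2/\sigma_4$ after renaming the constant.

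The main obstacle, such as it is, is bookkeeping rather than conceptual: one must be slightly careful that the two ``bad'' half-lines are genuinely separated by a gap of length at least $\sqrt{2\theta}$ (they are, since they start at $\mathbb E\xi \mp \sqrt{\theta/2}$ and point outward, so the gap between them is the interval $(\mathbb E\xi - \sqrt{\theta/2}, \mathbb E\xi + \sqrt{\theta/2})$ of length $\sqrt{2\theta}$), and that therefore an open interval of \emph{radius} $\sqrt{\theta/2}$, i.e.\ length $\sqrt{2\theta}$, cannot meet both — a symmetric interval about a point $t$ lying in the central gap might touch both endpoints, so one should instead use that any such interval is disjoint from at least the \emph{farther} of the two half-lines. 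A clean way to avoid the edge case is to prove the bound with radius $\sqrt{\theta}$ first (where the separation argument is immediate) and then note the statement with radius $\sqrt{\theta/2}$ follows a fortiori since $\mathcal L$ is monotone in $\delta$; alternatively one simply replaces $\sqrt{\theta/2}$ by any fixed fraction and absorbs the change into $c$. Finally I would double-check the reduction from $\mathbb E\eta^4$ to $\sigma_4$: $|\mathbb E\xi| \le \sigma_4^{1/4}$, so $\mathbb E\eta^4 = \mathbb E(\xi - \mathbb E\xi)^4 \le 8(\mathbb E\xi^4 + (\mathbb E\xi)^4) \le 16\sigma_4$, which only costs a constant.
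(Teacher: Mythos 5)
The paper leaves this Fact unproved ("we leave the details to the reader"), so the evaluation is purely on the merits of your proposal. Your Paley--Zygmund step is sound: centering at $\mathbb E\xi$, controlling $\mathbb E\eta^4 \le 16\sigma_4$, and taking $\lambda=1/2$ correctly yields $\mathbb P\bigl(|\xi-\mathbb E\xi| > \sqrt{\theta/2}\bigr) \ge c\,\theta^2/\sigma_4$.

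The conversion to a L\'evy-concentration bound, however, has a genuine gap. For a fixed $t$, the open interval $(t-r,t+r)$ with $r=\sqrt{\theta/2}$ avoids exactly one of the half-lines $H_-=\{\xi<\mathbb E\xi-r\}$, $H_+=\{\xi>\mathbb E\xi+r\}$ --- namely the one on the opposite side of $\mathbb E\xi$ from $t$. Which half-line is missed is dictated by $t$, not by which one is heavier; your observation "one of them carries mass $\ge \tfrac12 c\theta^2/\sigma_4$" identifies the larger mass, and these two need not coincide. Concretely, take $\xi=0$ with probability $1-p$ and $\xi=M$ with probability $p=1/M^2$: then $\mathrm{Var}[\xi]\asymp 1$, $\sigma_4=M^2$, $\mathbb E\xi=1/M$, $\mathbb P(H_-)=0$, $\mathbb P(H_+)=1/M^2$, and for any $t>\mathbb E\xi$ your argument only lets the interval miss $H_-$, producing the vacuous bound $\le 1$. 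Your proposed repair (prove the bound at radius $\sqrt\theta$ first) does not help --- a longer interval is \emph{more} likely to meet both half-lines, not less, and $\mathbb P(|\eta|\ge\sqrt\theta)$ is not controlled by Paley--Zygmund at all since it would require $\lambda=1$. A correct repair drops the half-line decomposition and uses that $\theta\le\mathrm{Var}[\xi]\le\mathbb E(\xi-t)^2$ for \emph{every} $t$: writing $q:=\mathbb P(|\xi-t|\ge r)$, $\Delta:=t-\mathbb E\xi$, and splitting $\mathbb E(\xi-t)^2$ at radius $r$ together with Cauchy--Schwarz and $\mathbb E(\xi-t)^4\le 8(\mathbb E\eta^4+\Delta^4)$ yields $\theta/2 + \Delta^2(1-\sqrt{8q}) \le \sqrt{8q}\,\sqrt{\mathbb E\eta^4}$. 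Since $\theta^2\le\sigma_4$ one may assume $q<1/8$; the $\Delta^2$-term is then nonnegative and can be discarded, forcing $q\ge c\,\theta^2/\sigma_4$ uniformly in $t$. The presence of $\Delta^2$ with a nonnegative coefficient on the left is exactly what makes the bound uniform in $t$, and is the ingredient your half-line argument was missing.
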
	
We leave the details to the reader. 
However, we stress that the authors in \cite{BS} can show that in their case there is {\em strong} natural boundary, see \cite[p.\ 4905]{BS}.  
\end{remark}

\begin{remark}
	Note that the existence of natural boundary at $|z|=1$, in light of Hadamard's theorem \cite{Had1892}, 
	yields almost surely that for each $n$ we have
\begin{equation}
      \limsup_m \left| \det A_m^{(n)}\right|^{1/m} = 1.
\end{equation} However, under the assumptions of Theorem \ref{radius} one can show that the {\em limits} exist almost surely:

\begin{fact}
	For $(a_j)_{j=0}^\infty$ independent random variables with $\mathbb E |a_j| \leq \gamma < \infty$, 
	and ${\mathcal L}(a_j , \varepsilon) \leq K\varepsilon$ for all $\varepsilon>0$ and for some $K\geq 1$, it holds that almost surely, for each $n$,
	\begin{equation}
      		\lim_m \left| \det A_m^{(n)}\right|^{1/m} = 1.
	\end{equation}
\end{fact}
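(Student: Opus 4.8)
The plan is to establish, separately for each fixed $n$, the two one-sided bounds $\limsup_m |\det A_m^{(n)}|^{1/m} \le 1$ and $\liminf_m |\det A_m^{(n)}|^{1/m} \ge 1$ almost surely, and then to intersect the resulting countably many almost sure events over $n \in \mathbb N$. Note that $A_m^{(n)}$ depends only on the $2n-1$ consecutive coordinates $a_{m-n+1},\dots,a_{m+n-1}$, so the finitely many $m<n-1$ (where some of these are the zero convention) are irrelevant to the limit.

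For the upper bound, expanding the determinant gives $|\det A_m^{(n)}| \le n!\, M_m^n$ with $M_m := \max_{|k-m|\le n-1}|a_k|$, since every entry of $A_m^{(n)}$ is one of these $a_k$. From $\mathbb E|a_j|\le\gamma$ and Markov's inequality, $\mathbb P(|a_j|>(1+\delta)^j)\le\gamma(1+\delta)^{-j}$ is summable in $j$, so by the first Borel-Cantelli lemma almost surely $|a_j|\le(1+\delta)^j$ for all large $j$; hence $M_m^{1/m}\le(1+\delta)^{(m+n-1)/m}\to 1+\delta$, so $\limsup_m M_m^{1/m}\le 1+\delta$ almost surely, and letting $\delta=1/k\downarrow 0$ gives $\limsup_m M_m^{1/m}\le 1$ almost surely. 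Since $(n!)^{1/m}\to 1$ for fixed $n$, it follows that $\limsup_m|\det A_m^{(n)}|^{1/m}\le 1$ almost surely. (This uses only the first-moment hypothesis; one could alternatively invoke the part of Theorem~\ref{radius} giving radius of convergence $1$, i.e.\ $\limsup_j|a_j|^{1/j}=1$ almost surely.)

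For the lower bound, fix $\delta\in(0,1)$. After the obvious index shift, $A_m^{(n)}$ is precisely the Toeplitz matrix of Proposition~\ref{prop:sb-det}, with $a_m$ on its main diagonal, and the $2n-1$ coordinates involved are independent by hypothesis. Applying that proposition with $\varepsilon=(1-\delta)^{m/n}$ together with ${\mathcal L}(a_m,\varepsilon)\le K\varepsilon$,
\[
 \mathbb P\!\left(|\det A_m^{(n)}| < (1-\delta)^m\right)
 = \mathbb P\!\left(|\det A_m^{(n)}|^{1/n} < (1-\delta)^{m/n}\right)
 \le n\,{\mathcal L}\!\left(a_m,(1-\delta)^{m/n}\right)
 \le nK\,(1-\delta)^{m/n}.
\]
The right-hand side is summable in $m$ (geometric with ratio $(1-\delta)^{1/n}<1$), so the first Borel-Cantelli lemma gives $|\det A_m^{(n)}|\ge(1-\delta)^m$ for all large $m$ almost surely, hence $\liminf_m|\det A_m^{(n)}|^{1/m}\ge 1-\delta$ almost surely; letting $\delta=1/k\downarrow 0$ yields $\liminf_m|\det A_m^{(n)}|^{1/m}\ge 1$ almost surely. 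Combined with the upper bound this proves the claim for fixed $n$, and intersecting over $n\in\mathbb N$ finishes the proof.

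The argument is short and I do not expect a serious obstacle; the one point worth flagging is the scaling choice $\varepsilon=(1-\delta)^{m/n}$ in the lower bound, which is exactly what converts the single-scale invertibility estimate of Proposition~\ref{prop:sb-det} into a probability decaying geometrically in $m$, so that Borel-Cantelli applies. Note that the two halves use disjoint pieces of the hypothesis: the upper bound uses only $\mathbb E|a_j|\le\gamma$, while the lower bound uses only independence and ${\mathcal L}(a_j,\varepsilon)\le K\varepsilon$.
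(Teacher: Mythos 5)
Your proof is correct and follows the same overall Borel--Cantelli framework as the paper's, but with more elementary intermediate estimates on each side. For the upper bound, the paper passes through the arithmetic--geometric mean bound $\mathbb{E}\,|\det A_m^{(n)}|^{1/n}\leq \mathbb{E}\|A_m^{(n)}\|_{\rm HS}/\sqrt{n}\leq\gamma n^{3/2}$ and then applies Markov to $|\det A_m^{(n)}|^{1/n}$, whereas you use the crude entrywise bound $|\det A_m^{(n)}|\leq n!\,M_m^n$ together with the elementary fact that $\mathbb{E}|a_j|\leq\gamma$ implies $\limsup_j|a_j|^{1/j}\leq1$ almost surely; since everything is raised to the power $1/m$, the polynomial-in-$n$ versus $n!$ difference is immaterial. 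For the lower bound, the paper first converts the small-ball estimate of Proposition~\ref{prop:sb-det} into the negative-moment bound $\mathbb{E}\,|\det A_m^{(n)}|^{-1/(2n)}\leq 2Kn$ and then applies Markov, while you apply Proposition~\ref{prop:sb-det} directly at the geometric scale $\varepsilon=(1-\delta)^{m/n}$ and feed the resulting summable probabilities to Borel--Cantelli; this skips the auxiliary negative-moment lemma and is arguably cleaner. Both arguments reach exactly the same geometric tail decay, so they are interchangeable; yours simply substitutes the cruder but more transparent determinant bound in one direction and a one-step application of the small-ball estimate in the other.
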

Indeed: We fix $n\geq 1$. From the arithmetic-geometric mean inequality
\begin{equation}
         \mathbb{E}\left | \det A_m^{(n)}\right|^{1/n} 
         \leq 
         \frac{\mathbb{E} \left \| A_m^{(n)}\right\|_{ \rm HS}}{\sqrt{n} }
         \leq \gamma n^{3/2} .
\end{equation}
From Markov's inequality
\begin{equation}
       \mathbb{P} \left( |\det A_m^{(n)}|^{1/n} >(1+\varepsilon)^{m/n} \right)
       \leq
       \frac{\mathbb{E}  | \det A_m^{(n)}|^{1/n}}{(1+\varepsilon)^{m/n}}
       \leq\frac{n}{(1+\varepsilon)^{m/n}},
\end{equation}  
and the first (direct) part of the Borel-Cantelli lemma gives that almost surely 
\begin{equation}
       \limsup_m |\det A_m^{(n)}|^{1/m} \leq 1 + \varepsilon.
\end{equation}          
On the other hand, again by Markov, 
\begin{equation}
      \mathbb{P} \left(  | \det A_m^{(n)}|^{-1/2n} > (1 + \varepsilon)^{m/2n} \right ) 
      \leq 
      \frac{\mathbb{E}  | \det A_m^{(n)}|^{-1/2n}}{(1+\varepsilon)^{m/2n}}
      \leq
      \frac{2Kn}{(1+\varepsilon)^{m/n}}, 
\end{equation}
where we have used Proposition \ref{prop:sb-det} and that for any random variable $\xi$
\begin{equation}
      \mathbb{P}(\xi<\varepsilon) \leq \gamma \varepsilon 
      \Rightarrow
      \mathbb{E}\left[ \xi^{-1/2}\right] \leq 1 + \gamma.
\end{equation}
Therefore, almost surely for any $\varepsilon >0$
\begin{equation}
      \limsup_m |\det A_m^{(n)}|^{-1/m} \leq 1+ \varepsilon
      \Rightarrow
      \liminf_m |\det A_m^{(n)}|^{1/m} \geq \frac1{1+ \varepsilon}.
\end{equation}

\end{remark}

\subsection{Random zeros in the unit disc}

The following lemma is standard. We shall use it to estimate averages of random polynomials on the unit circle.

\begin{lemma} \label{lem:dev}
Let $(\Sigma, \mu)$ be a probability space and let $(Y_\sigma)_{\sigma \in \Sigma}$ 
be a family of random variables with $\displaystyle Y: = \int Y_\sigma \, \mu(d\sigma)$ well defined. If
\begin{align*}
\sup_\sigma \mathbb P 
\left( 
Y_\sigma > t 
\right) \leq A e^{-at}, \quad 
\end{align*}
for some $A\geq 1$, $a>0$, and for all $t>0$, then 
\begin{align}\label{average of exp int}
\mathbb E [e^{\lambda Y}] \leq 3A, \quad 
\text{for all $0<\lambda \leq \frac{a}{2}$}. 
\end{align} In particular, 
\begin{align}\label{prob of int}
\mathbb P(Y > t) \leq 3A e^{-at/2},
\quad \text{for all $t>0$.}
\end{align} 
\end{lemma}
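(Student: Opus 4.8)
The plan is to prove Lemma~\ref{lem:dev} in two stages: first establish the bound \eqref{average of exp int} on the exponential moment of $Y$, and then derive the tail bound \eqref{prob of int} from it by a standard Markov/Chernoff argument.

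\textbf{Step 1: Exponential moment of $Y$.} The key observation is that $Y = \int Y_\sigma\,\mu(d\sigma)$ is an average of the $Y_\sigma$, so by Jensen's inequality applied to the convex function $x \mapsto e^{\lambda x}$ (for $\lambda > 0$),
\begin{equation*}
	e^{\lambda Y} = \exp\left( \lambda \int Y_\sigma\,\mu(d\sigma) \right) \leq \int e^{\lambda Y_\sigma}\,\mu(d\sigma).
\end{equation*}
Taking expectations and using Fubini (the integrand is nonnegative, so this is legitimate), $\mathbb E[e^{\lambda Y}] \leq \int \mathbb E[e^{\lambda Y_\sigma}]\,\mu(d\sigma) \leq \sup_\sigma \mathbb E[e^{\lambda Y_\sigma}]$. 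It then suffices to bound $\mathbb E[e^{\lambda Y_\sigma}]$ uniformly in $\sigma$ using the hypothesis $\mathbb P(Y_\sigma > t) \leq A e^{-at}$. This is the routine computation: write $\mathbb E[e^{\lambda Y_\sigma}] = \int_0^\infty \mathbb P(e^{\lambda Y_\sigma} > s)\,ds$, or more conveniently use the layer-cake identity $\mathbb E[e^{\lambda Y_\sigma}] \leq 1 + \lambda \int_0^\infty e^{\lambda t}\,\mathbb P(Y_\sigma > t)\,dt$ (splitting the contribution of $\{Y_\sigma \le 0\}$, which is at most $1$, from the rest), and then $\lambda \int_0^\infty e^{\lambda t} A e^{-at}\,dt = \frac{A\lambda}{a - \lambda} \leq A$ whenever $0 < \lambda \leq a/2$. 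Hence $\mathbb E[e^{\lambda Y_\sigma}] \leq 1 + A \leq 2A$ since $A \geq 1$; a slightly more careful bookkeeping of the layer-cake (keeping the $t<0$ part as $\mathbb P(Y_\sigma \le 0) \le 1$ and the $t \ge 0$ part as above) gives the stated constant $3A$ with room to spare. This yields \eqref{average of exp int}.

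\textbf{Step 2: Tail bound for $Y$.} For any $t > 0$ and $\lambda = a/2$, Markov's inequality applied to $e^{\lambda Y}$ gives
\begin{equation*}
	\mathbb P(Y > t) = \mathbb P(e^{\lambda Y} > e^{\lambda t}) \leq e^{-\lambda t}\,\mathbb E[e^{\lambda Y}] \leq 3A e^{-at/2},
\end{equation*}
which is exactly \eqref{prob of int}.

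\textbf{Main obstacle.} There is no serious obstacle here — the lemma is a soft, standard fact. The only points requiring a little care are (i) ensuring the interchange of $\mathbb E$ and $\int_\Sigma$ is justified (nonnegativity of $e^{\lambda Y_\sigma}$ suffices for Tonelli) and that $\mathbb E[e^{\lambda Y_\sigma}]$ is finite (guaranteed by the exponential tail bound for $\lambda < a$), and (ii) handling the region where $Y_\sigma$ or $Y$ may be negative, so that $\mathbb P(Y_\sigma > t) \le A e^{-at}$ is only informative for $t > 0$; this is why the layer-cake split at $t = 0$ is needed and why the constant comes out as $3A$ rather than $A$. One should also note that the hypothesis $Y = \int Y_\sigma\,\mu(d\sigma)$ ``well defined'' should be read as including integrability, so that Jensen applies; under the exponential tail hypothesis this is automatic.
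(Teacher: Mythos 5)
Your proof is correct and follows essentially the same route as the paper's: Jensen's inequality pushes the exponential inside the $\sigma$-average, Tonelli interchanges the expectation with the $\mu$-integral, a layer-cake estimate using the uniform exponential tail bound gives $\mathbb E[e^{\lambda Y_\sigma}]\le 1+A\le 3A$ for $\lambda\le a/2$, and then Markov's inequality at $\lambda=a/2$ yields the tail bound. The only (cosmetic) difference is that your split of the layer cake at $s=1$ actually yields the slightly sharper constant $2A$, whereas the paper splits at $Y_\sigma=0$ and lands on $2+A\le 3A$; either is fine for the stated conclusion.
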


\noindent {\it Proof.} Let $\lambda>0$. Using Jensen's inequality with respect to $\sigma$ and Tonelli's theorem,
\[
\mathbb E [ e^{\lambda Y}] 
\leq 
\mathbb E \left[ \int e^{\lambda Y_\sigma} \, \mu(d\sigma) \right] 
= 
\int \mathbb E[e^{\lambda Y_\sigma} ] \, \mu(d\sigma).
\] For fixed $\sigma \in \Sigma$ we have
\[
\mathbb E \left[ e^{\lambda Y_\sigma} \right] \leq 1 + \mathbb E\left[ e^{\lambda Y_\sigma} \mathbf 1_{ \{Y_\sigma >0\} }\right] 
\leq 1+ \mathbb E\left[ e^{\lambda Y_\sigma^+} \right].
\] Finally, we have
\[
\mathbb E\left[ e^{\lambda Y_\sigma^+} \right] = 1 + \lambda \int_0^\infty e^{\lambda t} \mathbb P(Y_\sigma > t) \, dt \leq 1+ \lambda A \int_0^\infty e^{(\lambda -a) t} \, dt\leq 1+A,
\] since $\lambda -a \leq -\lambda$. Integrating this with respect to $\sigma$ gives \eqref{average of exp int}, since $A>1$. 
Markov's inequality for $\lambda = a/2$ gives \eqref{prob of int}.  \prend

\medskip

We next examine random polynomials $f_N(z) = \sum_{j=0}^N a_j z^j$ 
associated to isotropic vectors ${\bf a}=(a_j)_{j=0}^N$  in $\mathbb R^{N+1}$ for which there is $K\geq 1$ such that for  all  $\varepsilon>0$
\begin{equation} \label{anti unit}
      \mathbb P( |\langle {\bf a} , u \rangle| < \varepsilon)\leq K\varepsilon, 
      \quad \text{for all $u$ with $\|u\|_2=1$},
\end{equation}
and we set 
\begin{equation}
      X_z : = |f_N(z)|^2.
\end{equation}
Note that $X_z$ is also given by 
	\[
	X_z  = \|V_z({\bf a})\|_2^2,
	\]
 for $V_z: \mathbb R^{N+1}\to \mathbb R^2$ the linear map with 
	\[
	V_z(e_j) = \left( |z|^j \cos (j\theta) , |z|^j \sin(j\theta)  \right), \quad \theta = {\rm Arg}(z),
	\] 
for $j=0,1,\dots, N$ and $(e_j)_{j=0}^N$ denotes the standard basis of $\mathbb R^{N+1}$. 
The isotropicity of $\bf a$ implies\footnote{Recall that $\|A\|_{\rm HS}$ stands for the Hilbert-Schmidt norm of the matrix $A$, i.e., $\|A\|_{\rm HS}^2 = \sum_{i,j} |a_{ij}|^2$.}
	\begin{align} \label{three means}
		\mathbb E [X_z] = \|V_z\|_{\rm HS}^2 = \sum_{k=0}^n |z|^{2k}.
	\end{align}
As we intend to apply Lemma \ref{lem:dev} to $\log (X_z/\mathbb E [X_z] )$, we first need the following:
\begin{lemma}\label{f:4-5}
For all $z\in \mathbb C$ and all $t>0$ it holds that
\begin{align} \label{eq:4-1}
		\mathbb P\left( \left|\, \log X_z - \log \mathbb E [X_z] \,\right| > t \right) \leq 3K e^{-t/2}.
\end{align} 
\end{lemma}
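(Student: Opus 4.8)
The statement asserts a two-sided deviation bound for $\log X_z$ around $\log \mathbb{E}[X_z]$, and the natural route is to split it into an upper tail and a lower tail and apply Lemma \ref{lem:dev} to each. The key observation is that $X_z = \|V_z(\mathbf{a})\|_2^2 = |\langle \mathbf{a}, v_1\rangle|^2 + |\langle \mathbf{a}, v_2\rangle|^2$, where $v_1, v_2 \in \mathbb{R}^{N+1}$ are the two rows of $V_z$ (with $v_1(e_j) = |z|^j\cos(j\theta)$, $v_2(e_j) = |z|^j \sin(j\theta)$), so that $\|v_1\|_2^2 + \|v_2\|_2^2 = \|V_z\|_{\rm HS}^2 = \mathbb{E}[X_z]$.

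\emph{Upper tail.} I would write $X_z$ as an average: pick the probability space $(\Sigma,\mu)$ appearing in Lemma \ref{lem:dev} so that $X_z$ itself (or a comparable quantity) is realized as $\int Y_\sigma\, d\mu$. Concretely, since $v_1, v_2$ need not have comparable norms, it is cleanest to take $\Sigma = \{1,2\}$ with weights $\mu(\{i\}) = \|v_i\|_2^2 / \mathbb{E}[X_z]$ and set $Y_i = \log\bigl(|\langle \mathbf{a}, v_i/\|v_i\|_2\rangle|^2 \cdot (\text{something})\bigr)$ — but the additive structure of $X_z$ (a \emph{sum}, not an average, of the two squared inner products) means I should instead bound $\mathbb{P}(X_z > t\,\mathbb{E}[X_z])$ directly via a union bound over the two coordinates: $\mathbb{P}(X_z > t\mathbb{E}[X_z]) \le \sum_{i=1,2}\mathbb{P}(|\langle \mathbf a, v_i\rangle|^2 > \tfrac{t}{2}\|v_i\|_2^2)$ when $t \ge 2$, and by isotropicity $\mathbb{E}|\langle \mathbf a, v_i\rangle|^2 = \|v_i\|_2^2$, so Markov gives a $1/t$ decay — too weak for the exponential bound claimed. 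Since only anti-concentration \eqref{anti unit} and a first/second moment are available, an exponential \emph{upper} tail for $X_z/\mathbb E[X_z]$ in $t$ cannot come from moments alone; the exponential must live on the log scale. So the right move is: for $s>0$,
\[
\mathbb{P}\bigl(\log X_z - \log\mathbb{E}[X_z] > s\bigr) = \mathbb{P}\bigl(X_z > e^s\,\mathbb{E}[X_z]\bigr) \le 2\,\mathbb P\Bigl(|\langle \mathbf a, u_i\rangle|^2 > \tfrac{e^s}{2}\|v_i\|_2^2 \text{ for some }i\Bigr),
\]
and here I use that $\mathbb E|\langle\mathbf a,v_i\rangle| \le \|v_i\|_2$ (isotropy plus Cauchy--Schwarz), so $\mathbb E|\langle \mathbf a, v_i/\|v_i\|_2\rangle| \le 1$, and Markov on the \emph{first} moment gives $\mathbb P(|\langle\mathbf a, v_i/\|v_i\|_2\rangle| > r) \le 1/r$, i.e. $\mathbb P(|\langle\mathbf a,v_i\rangle|^2 > r^2\|v_i\|_2^2) \le 1/r$. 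With $r = e^{s/2}/\sqrt 2$ this yields $\mathbb P(\log X_z - \log\mathbb E[X_z] > s) \le 2\sqrt 2\, e^{-s/2}$. Actually, to get the precise constant $3K$ with the $e^{-t/2}$ rate as stated, I will route the upper tail through Lemma \ref{lem:dev} as well, applied to $Y_\sigma = \log\bigl(|\langle \mathbf a, v_i\rangle|^2/\|v_i\|_2^2\bigr)$ over $\Sigma=\{1,2\}$ with the weights above: then $\int Y_\sigma\,d\mu \le \log\bigl(\sum_i (\|v_i\|_2^2/\mathbb E[X_z])\cdot |\langle\mathbf a,v_i\rangle|^2/\|v_i\|_2^2\bigr)$ is not quite $\log(X_z/\mathbb E[X_z])$, so I will instead use convexity of $-\log$ the other way; I expect a short argument here once the bookkeeping is fixed.

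\emph{Lower tail.} This is where anti-concentration \eqref{anti unit} enters: $\mathbb{P}(\log X_z - \log\mathbb{E}[X_z] < -t) = \mathbb{P}(X_z < e^{-t}\mathbb{E}[X_z])$. Since $X_z \ge |\langle \mathbf a, v_i\rangle|^2$ for each $i$, pick the index $i^\star$ with $\|v_{i^\star}\|_2^2 \ge \tfrac12 \mathbb E[X_z]$; then $X_z < e^{-t}\mathbb E[X_z] \le 2e^{-t}\|v_{i^\star}\|_2^2$ forces $|\langle \mathbf a, v_{i^\star}/\|v_{i^\star}\|_2\rangle| < \sqrt{2}\, e^{-t/2}$, which by \eqref{anti unit} has probability at most $K\sqrt{2}\,e^{-t/2}$. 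Combining the two tails and absorbing the $\sqrt 2$ and the factor $2$ into the stated $3K$ (using $K \ge 1$) gives \eqref{eq:4-1}.

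\emph{Main obstacle.} The delicate point is matching the exact constant $3K$ and the clean $e^{-t/2}$ rate on \emph{both} tails simultaneously, since the upper tail only uses a first moment (giving $e^{-t/2}$ naturally from Markov after the log-substitution) while the lower tail uses \eqref{anti unit} (also $e^{-t/2}$ after substituting $\varepsilon = \sqrt 2\,e^{-t/2}$). The bookkeeping in reducing the two-dimensional quantity $X_z$ to a single inner product $|\langle\mathbf a,u\rangle|$ — choosing which of $v_1,v_2$ to use, controlling the loss when one of them is small, and verifying $\|v_1\|_2^2+\|v_2\|_2^2 = \sum_{k=0}^N|z|^{2k}$ consistently with \eqref{three means} — is the part that needs care; everything else is a direct application of Lemma \ref{lem:dev} together with Markov's inequality and \eqref{anti unit}.
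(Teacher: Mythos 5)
Your lower-tail argument is exactly the paper's: pick $i^\star$ with $\|v_{i^\star}\|_2^2 \geq \tfrac12\|V_z\|_{\rm HS}^2$, note $X_z \geq |\langle \mathbf a, v_{i^\star}\rangle|^2$, substitute $\varepsilon = \sqrt 2\,e^{-t/2}$ into \eqref{anti unit}, and get $K\sqrt 2\,e^{-t/2}$. That part is complete and correct.

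The upper tail is where you wander. The paper applies Markov's inequality to $X_z$ itself, using $\mathbb E[X_z] = \|V_z\|_{\rm HS}^2$ from \eqref{three means}:
\[
\mathbb P\bigl(X_z > e^t\,\mathbb E[X_z]\bigr) \;\leq\; \frac{\mathbb E[X_z]}{e^t\,\mathbb E[X_z]} \;=\; e^{-t}.
\]
That is it. Your remark that ``Markov gives a $1/t$ decay --- too weak for the exponential bound'' overlooks that the event is $\{X_z > e^t\,\mathbb E[X_z]\}$, so the $1/(\cdot)$ decay is already $e^{-t}$ on the log scale --- better than the required $e^{-t/2}$. You do eventually arrive at this realization (``the exponential must live on the log scale''), but then take a needlessly intricate detour through a coordinate-wise union bound and a first-moment Markov estimate. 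That detour is logically sound but delivers at best $2\,e^{-t/2}$ (or $2\sqrt 2\,e^{-t/2}$ with the $\tfrac t2$ splitting you wrote), and combined with the lower-tail $K\sqrt 2\,e^{-t/2}$ this gives $\sim(2+\sqrt 2)K\,e^{-t/2}$ at best, which exceeds $3K\,e^{-t/2}$; the ``absorbing into $3K$'' you gesture at does not close. The paper's $e^{-t} \leq e^{-t/2}$ plus $K\sqrt 2\,e^{-t/2}$ gives $(1+\sqrt 2)K\,e^{-t/2} \leq 3K\,e^{-t/2}$ cleanly.

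Finally, the sub-argument you flag as needing fixing --- routing the upper tail through Lemma \ref{lem:dev} with weights $\|v_i\|_2^2/\mathbb E[X_z]$ and then appealing to ``convexity of $-\log$ the other way'' --- is a dead end and is never resolved in your write-up. Lemma \ref{lem:dev} is not needed in this lemma at all; it comes into play only in Proposition \ref{prop:hole}, where the present pointwise bound is averaged over the circle. Drop it here: one line of Markov on $X_z$ replaces everything in your upper-tail paragraph.
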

\begin{proof}
Fix $z\in \mathbb C$. By \eqref{three means} we have
	\begin{align*}
		\mathbb P\left( \left| \log X_z - \log\mathbb E [X_z] \right| > t \right) &=
					\mathbb P\left( X_z > e^t \mathbb E[X_z] \right) + \mathbb P \left(X_z \leq e^{-t} \|V_z\|_{\rm HS}^2 \right).
	\end{align*} 
The first probability is bounded by $e^{-t}$ using Markov's inequality. 
	
For the second,
notice that either $\| V_z^\ast(e_1)\|_2^2\geq \|V_z\|_{\rm HS}^2/2$ or $\| V_z^\ast (e_2) \|_2^2\geq \| V_z \|_{\rm HS}^2/2$. 
Without loss of generality assume the former. 
By H\"older's inequality and for 
 the unit vector $u_z: = V_z^\ast(e_1) / \| V_z^\ast(e_1)\|_2$ we have 
 \begin{align*}
		\mathbb P\left( X_z \leq e^{-t} \|V_z\|_{\rm HS}^2 \right) &\leq \mathbb P\left( |\langle {\bf a}, V_z^\ast(e_1)\rangle|^2 \leq e^{-t} \|V_z\|_{\rm HS}^2\right) \\	
				&\leq \mathbb P \left ( |\langle {\bf a}, u_z \rangle| \leq e^{-t/2} \frac{\|V_z\|_{\rm HS} }{\|(V_z)^\ast(e_1)\|_2} \right) \\
						& \leq \mathbb P\left( |\langle {\bf a}, u_z \rangle| \leq e^{-t/2} \sqrt{2}\right) \\
								& \leq e^{-t/2}K\sqrt{2}. \qedhere
	\end{align*}  
\end{proof}

\begin{proposition} \label{prop:hole} 
	Let ${\bf a}=(a_j)_{j=0}^N$ be an isotropic random vector in $\mathbb R^{N+1}$ 
	satisfying \eqref{anti unit}. Then for all $r,t>0$ the random polynomial $f_N(z) = \sum_{j=0}^N a_j z^j$ satisfies 	
	\begin{align} \label{eq:main-4-1}
			\mathbb P\left( \left| \frac{1}{2\pi}\int_0^{2\pi} \log |f_N(re^{i\theta})| \, d\theta -\frac{1}{2}\log \rho_N(r) - \log |a_0| \right| > t \right) \leq CK e^{-ct}, 
	\end{align} where $\rho_N(r)=\sum_{k=0}^N r^{2k}$. 
In particular,
	\begin{align}\label{inparticular}
			\mathbb P( \nu_{f_N}(\mathbb D) >0 )\geq 1 - CKN^{-c}.
	\end{align}
\end{proposition}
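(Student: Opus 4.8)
The plan is to prove \eqref{eq:main-4-1} first, by combining Lemma \ref{f:4-5} with Lemma \ref{lem:dev}, and then to deduce \eqref{inparticular} from it via Jensen's formula at radius $1$.

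For \eqref{eq:main-4-1}, abbreviate $I(r):=\frac{1}{2\pi}\int_0^{2\pi}\log|f_N(re^{i\theta})|\,d\theta$ and note the elementary identity
\[
  I(r)-\tfrac12\log\rho_N(r)
  =\tfrac12\cdot\frac{1}{2\pi}\int_0^{2\pi}\Bigl(\log X_{re^{i\theta}}-\log\mathbb E[X_{re^{i\theta}}]\Bigr)\,d\theta,
\]
which uses only $\log X_z=2\log|f_N(z)|$ together with the fact, from \eqref{three means}, that $\mathbb E[X_{re^{i\theta}}]=\rho_N(r)$ is independent of $\theta$. Put $Y_\theta:=\log X_{re^{i\theta}}-\log\mathbb E[X_{re^{i\theta}}]$; Lemma \ref{f:4-5} gives $\mathbb P(Y_\theta>t)\le 3Ke^{-t/2}$ and $\mathbb P(-Y_\theta>t)\le 3Ke^{-t/2}$, uniformly in $\theta$. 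Applying Lemma \ref{lem:dev} on $\Sigma=[0,2\pi]$ equipped with normalized Lebesgue measure, once for the family $(Y_\theta)_\theta$ and once for $(-Y_\theta)_\theta$, produces a two-sided exponential tail for $\frac{1}{2\pi}\int_0^{2\pi}Y_\theta\,d\theta$, and hence for $I(r)-\tfrac12\log\rho_N(r)$. It then remains to absorb the term $\log|a_0|$: by \eqref{anti unit} with $u=e_0$ we have $\mathbb P(|a_0|<\varepsilon)\le K\varepsilon$, so $\mathbb P(\log|a_0|<-t)\le Ke^{-t}$, while isotropy yields $\mathbb E|a_0|\le(\mathbb E a_0^2)^{1/2}=1$ and hence $\mathbb P(\log|a_0|>t)\le e^{-t}$ by Markov; thus $\mathbb P(|\log|a_0||>t)\le 2Ke^{-t}$. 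Combining the two estimates via the triangle inequality applied to $I(r)-\tfrac12\log\rho_N(r)-\log|a_0|$ gives \eqref{eq:main-4-1} with absolute constants (for instance $C=20$, $c=1/4$).

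For \eqref{inparticular}, take $r=1$, so $\rho_N(1)=N+1$. Since $f_N$ is a polynomial with $f_N(0)=a_0\ne0$ almost surely, Jensen's formula (Proposition \ref{JensenForm}; zeros on $|z|=1$ are harmless, cf.\ \cite[\S 4.8]{AN}) gives
\[
  I(1)-\log|a_0|=\sum_{|z_j|<1}\log\frac{1}{|z_j|}=:S\ge0,
\]
and $\nu_{f_N}(\mathbb D)>0$ holds precisely when $S>0$. On the event $\{\nu_{f_N}(\mathbb D)=0\}$ one has $S=0$, so (for $N\ge1$) this event is contained in $\bigl\{\,|S-\tfrac12\log(N+1)|\ge\tfrac12\log(N+1)\,\bigr\}$, whose probability is at most $CK(N+1)^{-c/2}\le CKN^{-c/2}$ by \eqref{eq:main-4-1} with $t=\tfrac12\log(N+1)$; this is \eqref{inparticular}.

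The step needing a little care is passing from the one-sided hypothesis of Lemma \ref{lem:dev} to a two-sided control of the $\theta$-average, which is why I apply the lemma separately to $(Y_\theta)_\theta$ and $(-Y_\theta)_\theta$, both of which inherit an exponential tail from the two-sided bound of Lemma \ref{f:4-5}. One should also record that $I(r)$ is almost surely finite — so that ``$Y$ well defined'' in Lemma \ref{lem:dev} holds — which is clear since $\theta\mapsto\log|f_N(re^{i\theta})|$ is integrable on the circle whenever $f_N\not\equiv0$, a polynomial having only finitely many zeros on any circle. Everything else is bookkeeping of constants, which need not be optimized.
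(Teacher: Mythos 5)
Your proof is correct and follows essentially the same route as the paper's: set $Y_\theta=\log X_\theta-\log\mathbb E[X_\theta]$, apply Lemma \ref{lem:dev} to both $(Y_\theta)$ and $(-Y_\theta)$ using the two-sided tail from Lemma \ref{f:4-5}, control $\log|a_0|$ separately, and combine; then invoke Jensen's formula at $r=1$ for \eqref{inparticular}. The only cosmetic point is that applying \eqref{eq:main-4-1} directly with $t=\tfrac12\log(N+1)$ bounds a $\ge$-event by a $>$-tail, which is harmless (pass to $t-\epsilon$, or simply take $t=\tfrac14\log(N+1)$ as the paper does) — and in fact your identification of the Jensen sum as $S=\sum_{|z_j|<1}\log(1/|z_j|)$ is cleaner than the paper's displayed formula.
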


\begin{proof} 
For any fixed $r>0$ we set
\begin{equation} 
      X_\theta := X_{re^{i\theta}}, \quad
      \quad 
      Y_\theta: = \log \left( \frac{X_\theta}{\mathbb E[X_\theta] }\right),
      \quad
	 Y:= \frac{1}{2\pi} \int_0^{2\pi} Y_\theta  \, d\theta.
\end{equation}
In view of Lemma \ref{f:4-5}, apply Lemma \ref{lem:dev}  for $(\pm Y_\theta)_{\theta\in [0,2\pi]}$ to have
	\begin{align} \label{eq:4-6}
		\mathbb P \left( |Y| > t \right) \leq 18 K e^{ - t/4}, \quad t>0.
	\end{align} Since 
	\begin{align}
		\mathbb P( \left| \log |a_0| \right| > s) \leq e^{-2s} + \mathbb P(|a_0| < e^{-s}) \leq 2K e^{-s}, \quad s>0,
	\end{align} we get
	\begin{align}
		\mathbb P \left( \left| Y + 2\log |a_0| \right | > 2t\right) \leq 18K e^{-t/4} + 2Ke^{-t/2} \leq 20 K e^{-t/4},
	\end{align} which proves \eqref{eq:main-4-1}.
\medskip
	
For \eqref{inparticular}  we use Jensen's formula from Proposition \ref{JensenForm}:
	\[
		\frac{1}{2\pi} \int_0^{2\pi} \log |f_N(e^{i\theta})| \, d\theta - \log |a_0| 
		= 
		N \nu_{f_N}(\mathbb D) 
	\] Applying \eqref{eq:main-4-1} for $r=1$ and $t= \frac{1}{4} \log (N+1)$ we find that 
	\[
		\mathbb P\left( \nu_{f_N}(\mathbb D)  \geq \frac{\log (N+1)}{4N}\right) 
		>
		1-20 Ke^{-\frac{1}{16}\log (N+1)},
	\] 
as asserted. \end{proof}

\begin{notation}
${\frak N}_{f}(r)$ will denote the number of zeros of $f$, counted with multiplicity, in the open disc of radius $r$ centered at the origin.

 For $s= 0, 1, 2, \ldots$ let $R_s(f)$ denote the largest radius $r>0$ for which the function $f$ has no more than $s$ zeros in the open disc of radius $r$ centered at the origin. 
\end{notation}
Recall here that, by a straightforward calculation, if we arrange the zeros of a holomorphic function in increasing 
distance from the origin $|z_1|\leq |z_2|\leq \ldots$, then
\begin{equation} \label{numberfromintegral}
      \int_0^r \frac{{\frak N}_{f}(t)}{t} dt
      =
      \sum_{j =1}^{{\frak N}_{f}(r)} \log \frac{r}{|z_j|}.
\end{equation}
We shall need the following:
\begin{lemma}	\label{lem:aux-4-1}
	Let $f: \mathbb D\to \mathbb C$ be a holomorphic function with $f(0)\neq 0$. If $f_n:\mathbb D \to \mathbb C$ is a sequence
	of holomorphic functions which converges uniformly on compacts to $f$, then for any $0<r<1$ we have
		\begin{align*}
			\lim_{n\to \infty} \int_0^{2\pi} \log |f_n(re^{i\theta})| \, d\theta = \int_0^{2\pi} \log |f(re^{i\theta})| \,d\theta.
		\end{align*}
\end{lemma}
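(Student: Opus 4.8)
The plan is to express the circular average through Jensen's formula (Proposition~\ref{JensenForm}) together with the counting identity \eqref{numberfromintegral}, which combine to give, for every function $g$ holomorphic on $\mathbb D$ with $g(0)\neq 0$ and every $0<\rho<1$,
\[
\frac{1}{2\pi}\int_0^{2\pi}\log|g(\rho e^{i\theta})|\,d\theta=\log|g(0)|+\int_0^{\rho}\frac{{\frak N}_g(t)}{t}\,dt .
\]
Since local uniform convergence to $f\not\equiv 0$ forces $f_n\not\equiv 0$ and $f_n(0)\neq 0$ for all large $n$, I would discard the finitely many exceptional indices and apply this identity to both $g=f_n$ and $g=f$ at $\rho=r$. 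Subtracting, the assertion reduces to two limits: $\log|f_n(0)|\to\log|f(0)|$, which is immediate from $f_n(0)\to f(0)\neq 0$; and $\int_0^r{\frak N}_{f_n}(t)/t\,dt\to\int_0^r{\frak N}_f(t)/t\,dt$.

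For the latter I would invoke Hurwitz's theorem: for any $t\in(0,1)$ on whose circle $f$ does not vanish, one has ${\frak N}_{f_n}(t)={\frak N}_f(t)$ for all large $n$; as $f\not\equiv 0$ has only countably many zeros, this gives ${\frak N}_{f_n}(t)\to{\frak N}_f(t)$ for almost every $t\in(0,r)$. To pass to the limit under the integral I would use dominated convergence. Because $f(0)\neq 0$ and $f_n\to f$ uniformly on a small closed disc about $0$, there is $\delta\in(0,r)$ with ${\frak N}_{f_n}\equiv 0$ on $[0,\delta]$ for all large $n$, so the integrand vanishes there (this is also what makes each integral finite). Fixing $\rho\in(r,1)$ with $f$ non-vanishing on $|z|=\rho$, Hurwitz again gives $\sup_n{\frak N}_{f_n}(\rho)<\infty$, a convergent sequence of non-negative integers being bounded, whence ${\frak N}_{f_n}(t)\le{\frak N}_{f_n}(\rho)$ is bounded uniformly in $n$ on $[\delta,r]$. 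Dominated convergence on the finite interval $[\delta,r]$ then finishes the argument.

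The one genuine obstacle is that $\log|f_n|$ is unbounded below near the zeros of $f_n$, so one cannot apply dominated (or Fatou-type) convergence directly to $\log|f_n(re^{i\theta})|$ on the circle: the mass could a priori escape to $-\infty$. Rewriting the average via Jensen trades this for the zero-counting functions ${\frak N}_{f_n}$, which are non-negative and, crucially, locally uniformly bounded in $n$, making the passage to the limit routine. A minor point is the integrability of ${\frak N}_{f_n}(t)/t$ at the origin, handled by the uniform lower bound on $|f_n|$ near $0$ coming from $f(0)\neq 0$; and note the argument never uses pointwise values on the circle $|z|=r$, so it is insensitive to whether $f$ (or some $f_n$) happens to vanish on that particular circle, which is why the statement can be asserted for \emph{every} $0<r<1$.
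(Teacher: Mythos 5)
Your proof is correct and follows the same underlying strategy as the paper: invoke Jensen's formula to reduce the circular average to $\log|f_n(0)|$ plus a term counting zeros, then use Hurwitz's theorem to show the zero-contribution converges. The paper writes the zero-contribution as the finite sum $\sum_j \log(|z_j^{(n)}|/r)$ and matches each $z_j^{(n)}$ with a zero $z_j$ of $f$ via Hurwitz, while you instead recast it as $\int_0^r {\frak N}_{f_n}(t)/t\,dt$ using \eqref{numberfromintegral} and pass to the limit by dominated convergence; this is a small but real advantage, since your formulation is manifestly insensitive to whether $f$ happens to vanish on the specific circle $|z|=r$ (a case the paper must handle by citing the version of Jensen's formula allowing boundary zeros, and which makes the paper's assertion ${\frak N}_{f_n}(r)={\frak N}_f(r)$ for large $n$ slightly delicate).
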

\begin{proof} As each $f_N$ is holomorphic 
on a domain that includes $|z|\leq r$ then, regardless of zeros on the circle $|z|=r$ \cite[Theorem 4.8.3]{AN}, Jensen's formula
yields
\begin{equation}
    \frac{1}{2\pi}\int_0^{2\pi} \log|f_n(re^{i\theta})| \, d \theta
    =
    \log|f(0)| + \sum_{j=1}^{{\frak N}_{f_n}(r)} \log\frac{|z_j^{(n)}|}{r},
\end{equation}
where ${\frak N}_{f_n}(r)$ is the number of zeros of $f_n$ in $|z|<r$ and $z_j^{(n)}$ denote the zeros of $f_n$.
Also,
\begin{equation}
    \frac{1}{2\pi}\int_0^{2\pi} \log|f(re^{i\theta})| \, d \theta
    =
    \log|f_n(0)| + \sum_{j=1}^{{\frak N}_f(r)} \log\frac{|z_j|}{r}.
\end{equation}
Given any $\varepsilon>0$, by the theorem of Hurwitz \cite[Theorem 5.1.3]{AN}, after a certain $n$ we have ${\frak N}_f(r) = {\frak N}_{f_n}(r)$,
and for any $z_j$ there is unique $z_l^{(n)}$, call it $z ^{(n)}(j)$, with $|z_j - z^{(n)}(j)| < \varepsilon$, so that 
\begin{equation}
    \left|\sum_{j=1}^{{\frak N}_f(r)} \log\frac{|z_j|}{r}
    -
    \sum_{j=1}^{{\frak N}_{f_n}(r)} \log\frac{|z_j^{(n)}|}{r}\right|
    \leq
    \sum_{j=1}^{{\frak N}_f(r)} 
    \left|\log|z_j| - \log|z^{(n)}(j)|\right| ,
\end{equation}
which can be made arbitrarily small.
\end{proof}
We now prove the main result of this section, Theorem \ref{roots}, which should be compared to \cite[p.\ 180, Theorem 1]{K} that considers only Gaussian $a_j$'s and also uses Jensen's formula. 
\begin{theorem} \label{roots}
Let $(a_j)_{j=0}^\infty$ be a sequence of independent random variables on a probability space $(\Omega, \Sigma, \mathbb P)$
with $\mathbb E[a_j]=0$, $\mathbb E[a_j^2]=1$, and ${\mathcal L}(a_j , \varepsilon) \leq K\varepsilon$ for all $\varepsilon>0$ and all $j$. 
Then, for the random power series $f_\omega (z) = \sum_{j=0}^\infty a_j(\omega) z^j$ the following hold true:
	\begin{enumerate}
		\item We have
			\begin{align} \label{eq:limlaw}
				\liminf_{r\uparrow 1}
				\left|
				 \frac{1}{\log(1-r^2)} \int_0^r \frac{{\frak N}_f(t)}{t} \, dt
				  +\frac{1}{2}\right| =0, \quad a.s.
			\end{align}
		\item The zero-set of $f$ in $\mathbb D=\{z\in \mathbb C : |z| <1\}$ is infinite a.s., $0<R_s(f)<1$ for $s=0,1,\ldots$, and $R_s(f) \uparrow 1$. 
		
Moreover, for almost every $\omega$, we have
\begin{equation} \label{1-R>R}
       1 -R_s(f_\omega) \geq R_0(f_\omega)^{cs} ,
\end{equation} for all sufficiently large $s$, and
\begin{equation}\label{bigO}
      1-R_s(f_\omega) = O \left( \dfrac{\log s}{s} \right), \quad s\to \infty. 
\end{equation}
			
	\end{enumerate}
\end{theorem}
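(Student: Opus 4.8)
The plan is to run Jensen's formula on the random entire function $f_\omega$ in tandem with the probabilistic estimate from Proposition \ref{prop:hole}, and then upgrade the ``with high probability'' statement of that proposition into an almost-sure statement via a Borel--Cantelli argument along a dyadic sequence of radii $r_k = 1 - 2^{-k}$. First I would truncate: for $z$ in the closed disc of radius $r<1$, the Taylor polynomials $f_N$ converge uniformly on compacts to $f$, so by Lemma \ref{lem:aux-4-1} the circle averages $\frac{1}{2\pi}\int_0^{2\pi}\log|f_N(re^{i\theta})|\,d\theta$ converge to $\frac{1}{2\pi}\int_0^{2\pi}\log|f(re^{i\theta})|\,d\theta$, and likewise ${\frak N}_{f_N}(r)\to{\frak N}_f(r)$ and the inner radii match eventually by Hurwitz. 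Meanwhile, for each fixed $N$, Proposition \ref{prop:hole} gives that $\frac{1}{2\pi}\int_0^{2\pi}\log|f_N(re^{i\theta})|\,d\theta$ concentrates around $\frac12\log\rho_N(r)+\log|a_0|$ with a sub-exponential tail of constant $CKe^{-ct}$. Passing $N\to\infty$ (along a fixed realization), since $\rho_N(r)\to (1-r^2)^{-1}$, I expect to obtain, for each fixed $r\in(0,1)$,
\begin{equation}
\mathbb P\left(\left|\frac{1}{2\pi}\int_0^{2\pi}\log|f(re^{i\theta})|\,d\theta - \tfrac12\log\tfrac{1}{1-r^2} - \log|a_0|\right| > t\right) \leq CKe^{-ct}.
\end{equation}
Combining with Jensen's formula from Proposition \ref{JensenForm} and the identity \eqref{numberfromintegral}, this says that $\int_0^r \frac{{\frak N}_f(t)}{t}\,dt$ equals $\frac12\log\frac{1}{1-r^2}$ up to a random error with a sub-exponential tail uniform in $r$.

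For part (1), I would fix a small $\delta>0$, choose the dyadic radii $r_k=1-2^{-k}$, set $t = \frac{\delta}{2}\log\frac{1}{1-r_k^2} \approx \frac{\delta}{2}(k\log 2 + \log 2)$, and apply the tail bound: the probabilities $CKe^{-ct}$ are summable in $k$, so by the first Borel--Cantelli lemma, almost surely for all large $k$,
\begin{equation}
\left|\int_0^{r_k}\frac{{\frak N}_f(t)}{t}\,dt - \tfrac12\log\tfrac{1}{1-r_k^2}\right| \leq \delta\log\tfrac{1}{1-r_k^2} + |\log|a_0||,
\end{equation}
which, dividing by $\log(1-r_k^2)$ and noting $\log(1-r^2) = \log\frac{1}{1-r^2}\cdot(-1)$, forces the $\liminf$ in \eqref{eq:limlaw} to be at most $\delta$; letting $\delta\downarrow 0$ through a countable sequence gives the claim. (The monotonicity of $r\mapsto\int_0^r\frac{{\frak N}_f(t)}{t}\,dt$ lets me interpolate between consecutive dyadic radii if one wants the full $\liminf_{r\uparrow1}$ rather than along the subsequence, though along the subsequence already suffices for a $\liminf$.)

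For part (2): since $\int_0^r\frac{{\frak N}_f(t)}{t}\,dt\to\infty$ as $r\uparrow1$ (by part (1), as $\log\frac{1}{1-r^2}\to\infty$), and this integral is finite whenever ${\frak N}_f(r)$ is finite for $r$ bounded away from the radius of convergence, $f$ must have infinitely many zeros in $\mathbb D$; hence every $R_s(f)$ is well-defined, $R_s(f)<1$, and $R_s(f)\uparrow1$, while $R_0(f)>0$ because $f(0)=a_0\neq0$ a.s. (anti-concentration with $K<\infty$ gives $\mathbb P(a_0=0)=0$). For the quantitative rate \eqref{1-R>R}: by \eqref{numberfromintegral}, $\int_0^{R_s}\frac{{\frak N}_f(t)}{t}\,dt = \sum_{j=1}^{s}\log\frac{R_s}{|z_j|} \leq s\log\frac{R_s}{R_0} \leq s\log\frac{1}{R_0} = -s\log R_0$, using $|z_1|\geq R_0$ and $R_s<1$. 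On the other hand part (1) (the $\liminf$ being $0$, so along a subsequence $R_{s_i}$ with $R_{s_i}\uparrow1$ the integral is at least $(\frac12-\delta)\log\frac{1}{1-R_{s_i}^2}$) gives a matching lower bound on the integral, and comparing the two yields $\log\frac{1}{1-R_s^2}\leq \frac{2s}{1-2\delta}\log\frac{1}{R_0}(1+o(1))$, i.e. $1-R_s^2 \geq R_0^{cs}$ for suitable $c$ and all large $s$, which gives \eqref{1-R>R} after a trivial comparison of $1-R_s$ with $1-R_s^2$. Finally \eqref{bigO} follows by instead using the upper bound from the tail estimate: almost surely $\int_0^{r}\frac{{\frak N}_f(t)}{t}\,dt \leq (\frac12+\delta)\log\frac{1}{1-r^2} + C_\omega$ for all $r$, so ${\frak N}_f(r)\cdot\log\frac{r}{R_0}$-type lower bounds on the integral force ${\frak N}_f(r)$, hence the count $s={\frak N}_f(R_s^-)$, to grow at least like $\log\frac{1}{1-R_s}$, i.e. $1-R_s = O(e^{-cs})$ is too strong---rather, rearranging $s \gtrsim \log\frac{1}{1-R_s}/\log\frac{1}{R_s}$ together with $\log\frac{1}{R_s}\asymp 1-R_s$ yields $(1-R_s)\log\frac{1}{1-R_s} = O(1/s)\cdot(\text{const})$, hence $1-R_s = O(\log s/s)$.

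The main obstacle I anticipate is the limiting step $N\to\infty$: Proposition \ref{prop:hole} is stated for the finite random polynomial $f_N$ with constants independent of $N$, and I need to transfer both the concentration of the circle average and Jensen's identity to the infinite series $f$ for a fixed radius $r<1$. Lemma \ref{lem:aux-4-1} handles the deterministic convergence of the integrals for each fixed realization, but turning the family of tail bounds for $\{f_N\}$ into a single tail bound for $f$ requires some care---e.g. noting that $\rho_N(r)\to(1-r^2)^{-1}$ uniformly enough, that $\log|a_0|$ is the same random variable for all $N$, and that almost sure convergence of the averages lets one pass to the limit inside the probability via Fatou or a routine $\varepsilon/3$ argument. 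The other point needing attention is making the Borel--Cantelli error terms (the $|\log|a_0||$ and any $C_\omega$) genuinely $o(\log\frac{1}{1-r_k^2})$, which they are since they are fixed finite random variables while $\log\frac{1}{1-r_k^2}\to\infty$; this is what makes the normalization in \eqref{eq:limlaw} come out to exactly $-\tfrac12$.
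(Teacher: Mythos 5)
Your overall approach matches the paper's: truncate to $f_N$, apply Proposition \ref{prop:hole}, pass $N\to\infty$ via Lemma \ref{lem:aux-4-1} together with $\rho_N(r)\to(1-r^2)^{-1}$ and a Fatou-type argument, then run Borel--Cantelli along a sequence of radii tending to $1$ and combine with Jensen's formula. The paper chooses $r_{m,\delta}=\sqrt{1-m^{-2/(c\delta)}}$ rather than dyadic radii, and defines the bad events $\mathcal{E}_N(r,\delta)$, $\mathcal{E}_\infty(r,\delta)$ before passing to the limit, but these are cosmetic differences.

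There is a genuine gap at the very first step. Proposition \ref{prop:hole} requires the anti-concentration hypothesis \eqref{anti unit}, namely
$\mathbb P(|\langle{\bf a}_N,u\rangle|<\varepsilon)\leq K\varepsilon$ for \emph{every} unit vector $u\in\mathbb R^{N+1}$, whereas the hypothesis of Theorem \ref{roots} only provides the coordinate-wise bound $\mathcal L(a_j,\varepsilon)\leq K\varepsilon$. You invoke Proposition \ref{prop:hole} without bridging this. For a general random vector, coordinate-wise anti-concentration does not imply anti-concentration of arbitrary one-dimensional marginals, so this is not automatic; under independence it does follow, but only via a nontrivial small-ball estimate. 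The paper handles this explicitly by citing \cite[Corollary 1.4]{RV-sb}, which upgrades the coordinate bound to the directional bound (with $K$ replaced by $CK$). Without that step, the tail estimate you state for $\frac{1}{2\pi}\int_0^{2\pi}\log|f_N(re^{i\theta})|\,d\theta$ is unjustified.

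A second, more minor point concerns part (2). You invoke ``the $\liminf$ being $0$'' from part (1) to obtain a lower bound on $\int_0^{R_s}\frac{\mathfrak N_f(t)}{t}\,dt$ ``along a subsequence $R_{s_i}$,'' and then conclude \eqref{1-R>R} and \eqref{bigO} for all sufficiently large $s$. The $\liminf$ statement only guarantees the two-sided bound along some $\omega$-dependent sequence of radii, and there is no reason these radii line up with the $R_s$'s. What you actually need (and what the paper extracts by working with the null set $\mathcal N_{1/3}=\limsup_m\mathcal E_\infty(r_m,1/3)$) is that both the upper and lower bounds $\frac16\log\frac{1}{1-r_m^2}\leq\int_0^{r_m}\frac{\mathfrak N_f(t)}{t}\,dt\leq\frac56\log\frac{1}{1-r_m^2}$ hold for \emph{all} $m\geq m_0(\omega)$ along the specific sequence $r_m$, not just along an unspecified subsequence. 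The Borel--Cantelli step you run already gives this stronger ``eventually along $r_k$'' form; you just discarded it by passing to the $\liminf$. Together with monotonicity of $r\mapsto\int_0^r\frac{\mathfrak N_f(t)}{t}\,dt$ (the interpolation you mention parenthetically), this yields bounds for all large $r$ and hence all large $s$. So this gap is fillable by your own tools, but as written the deduction of \eqref{1-R>R} and \eqref{bigO} from the $\liminf=0$ statement alone does not go through.
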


\noindent {\it Proof.}  (1) Let $f_N(\omega,z) = \sum_{j=0}^N a_j(\omega) z^j$ be the $N$-th partial sum of $f_\omega$. Then, the random 
vector ${\bf a}_N=(a_0,\ldots,a_N)$ is isotropic and has independent coordinates with ${\mathcal L}(a_j, \varepsilon) \leq K\varepsilon$ for all 
$\varepsilon>0$. Therefore, from \cite[Corollary 1.4]{RV-sb}, we conclude that for all $\|u\|_2=1$ the anti-concentration estimate 
\begin{equation}
      \mathbb P(|\langle {\bf a}_N, u\rangle| \leq \varepsilon ) \leq CK\varepsilon
\end{equation}
holds for all $\varepsilon >0$. 
Then if $t_{N,r,\delta}=\delta \log \rho_N(r), \; 0 < r <1$, $\delta>0$, and 
\begin{align}
		{\mathcal E}_N(r,\delta) := \left\{ \left| \frac{1}{2\pi} \int_0^{2\pi} \log|f_N(\omega, re^{i\theta} ) | \, d\theta - \frac{1}{2} \log \rho_N(r) -\log |a_0| \right | > t_{N,r,\delta} \right \},
	\end{align}
Proposition \ref{prop:hole}, applied for $t=t_{N,r,\delta}$, shows that 
	\begin{align} \label{eq:4-9}
		\mathbb P({\mathcal E}_N(r,\delta)) \leq CKe^{-t_{N,r,\delta}/4} = CK e^{-c \delta \log \rho_N(r) }.
	\end{align}
Now if
	\[
		{\mathcal E}_\infty(r,\delta) := \left\{ \left| \frac{1}{2\pi} \int_0^{2\pi} \log|f_\omega(re^{i\theta})| \, d\theta  -\log |a_0| +\frac{1}{2} \log (1-r^2) \right| > -\delta \log(1-r^2) \right \},
	\] 
	Lemma \ref{lem:aux-4-1} implies that ${\mathcal E}_\infty(r,\delta) \subset \liminf_N \mathcal E_N(r,\delta)$, and Fatou's lemma implies that 
	\[
	\mathbb P \left( \liminf_N \mathcal E_N(r,\delta) \right) \leq \liminf_N \mathbb P(\mathcal E_N(r,\delta)) \stackrel{\eqref{eq:4-9}}\leq CK(1-r^2)^{c \delta}.
	\] Having derived
		\begin{align} 
			\mathbb P (\mathcal E_\infty(r,\delta)) \leq CK (1-r^2)^{c\delta}, \quad 0<r<1, \; \delta >0,
		\end{align}
Jensen's formula for $f$ shows that 
	\begin{align} \label{eq:aux-dev-2}
		{\mathcal E}_\infty(r,\delta) = \left\{ \left| \frac{1}{2} + \frac{1}{\log(1-r^2)} \int_0^r \frac{ {\frak N}_f(t)}{t} \, dt \right| > \delta \right\}	.
	\end{align}
Now the choice $r=r_{m,\delta} := \sqrt{1 - m^{-2/(c\delta) }}$, $m=1,2,\ldots$ yields that
	\[
		\sum_m \mathbb P(\mathcal E_\infty(r_{m,\delta}, \delta)) \leq CK \sum_m (1-r_{m,\delta}^2)^{c\delta} \leq CK \sum_m \frac{1}{m^2} < \infty.
	\] Hence, the first Borel-Cantelli lemma in turn shows that the set
	\begin{align} \label{eq:null-1}
	\mathcal N_\delta:= \limsup_m \mathcal E_\infty (r_{m,\delta}, \delta), 
	\end{align} is null for each $\delta >0$. Further, let $\mathcal N : =  \bigcup_{q \in \mathbb Q^+} \mathcal N_q$. Clearly, $\mathcal N$ is a null set
	and for every $\omega\notin \mathcal N$ it holds that: for any $\varepsilon > 0$ there is subsequence of $r$'s (of the form $r_{m,1/n}$ for $1/n<\varepsilon$) 
	such that for that subsequence we eventually have 
	\begin{equation}
       \left| \frac{1}{2} + \frac{1}{\log(1-r^2)} \int_0^r \frac{ {\frak N}_f(t)}{t} \, dt \right| <\varepsilon, 
\end{equation}
for any $\varepsilon > 0$. 
In particular, $\displaystyle \liminf_r \left| \frac{1}{2} + \dfrac{1}{\log(1-r^2)} \int_0^r \dfrac{ {\frak N}_f(t)}{t} \, dt \right| < \varepsilon$ for all $\varepsilon > 0$.

\smallskip

\noindent (2) Part(1) implies that for a.e. $\omega\in \Omega$ the zero-set of $f_\omega$ in $\mathbb D$ is infinite, therefore we have strict inequality $R_s(f_\omega)<1$ for all $s$. 
Since $a_0\neq 0$ a.s. we have that $R_0(f)>0$ a.s.
By definition, $(R_s)_{s=0}^\infty$ is a non-decreasing sequence of numbers. Then as
\begin{equation}
      \sum_{j =1}^{{\frak N}_{f}(r)} 
      \log \frac{r}{|z_j|} 
      \leq 
      {\frak N}_f(r)\log\frac{1}{R_0},
\end{equation}
and $R_{ {\frak N}_f(r)} \geq  r$, \eqref{numberfromintegral} and \eqref{eq:limlaw}  show that $R_s\to 1$ as $s\to \infty$. 

A more careful analysis of the argument leading up to \eqref{eq:limlaw} reveals the announced rates of convergence. 
Indeed; for \eqref{1-R>R},  consider $\mathcal N_{1/3}$ from \eqref{eq:null-1}. Then, for $\omega \notin \mathcal N_{1/3}$ there exists 
$m_0(\omega)\in \mathbb N$ such that for all $m\geq m_0$ we have
\begin{equation} \label{double}
		\frac{1}{6}\log[ (1-r_m^2)^{-1}] \leq \int_0^{r_m} \frac{ {\frak N}_{f}(t) }{t} \, dt \leq \frac{5}{6} \log[(1-r_m^2)^{-1}],
\end{equation}
where $r_m\equiv r_{m,1/3}= \sqrt{1 - m^{-6/c}}$. It follows from the first inequality in \eqref{double} that
	\[
		\frak N_{f} (r_m) \log (1/R_0) \geq \int_{R_0}^{r_m} \frac{ \frak{N}_{f}(t) }{t} \, dt  = \int_{0}^{r_m} \frac{ \frak N_{f}(t)}{t} \, dt 	\geq c \log m,
	\]
 for all $m\geq m_0$. Then, for any $s >  \dfrac{c\log m_0}{\log(1/R_0)}$, and for the smallest $m$ so that $s\leq \dfrac{c\log m}{\log(1/R_0)}$ 
 we have $s \leq  \frak N_{f} (r_m) $. By the definition of $R_s$, we also have  $R_s\leq r_m$.  
 Finally use $r_m \leq 1 -m^{-c'}$ to conclude \eqref{1-R>R}. 
 
 For \eqref{bigO},  first note that $\frak N_f(t) \geq  s$ for all $t>R_s$. Therefore, using the second inequality in \eqref{double},
	\begin{align} \label{eq:4-a-1}
		\frac{s}{2} \log \frac{1}{R_s} \leq \int_{R_s}^{\sqrt{R_s}} \frac { \frak N_f(t)}{t} \, dt \leq \int_0^{r_{q_s+1}} \frac{ \frak N_f(t)}{t} \, dt \leq C \log q_s,
	\end{align} where $q_s:=\min \{m \mid r_{m+1}^2 \geq R_s\}$. By the definition of $q_s$ we have 
	\begin{align} \label{eq:4-a-2}
		R_s > r_{q_s}^2 = 1 - q_s^{-c} \quad \Longrightarrow \quad c \log q_s  <  \log \frac{1}{1-R_s}.
	\end{align} Combining \eqref{eq:4-a-1} with \eqref{eq:4-a-2} we obtain
	\begin{align*}
		 cs \log \frac{1}{R_s} \leq \log \frac{1}{1-R_s}.
	\end{align*} 
Since $1/2< R_s < 1$ for all sufficiently large $s$, we conclude\footnote{Set $y:=(1-R_s)^{-1} \in (2,\infty)$. Then, 
\[ 
\log y \geq cs \log \left(1 + \frac{1}{y-1} \right) \geq   cs \frac{1}{y} \quad  \Longrightarrow  \quad y \log y \geq cs.
\] It is easy to check that for $h(y)=y\log y, \; y > 2$ we have $h^{-1}(u) \asymp \dfrac{u}{\log u}$ from which the 
\eqref{bigO} follows.} 
that $1-R_s \leq C \log s/ s$, as claimed. \prend

\subsection{Clustering of poles}

Recall first the following:

\begin{definition} [Radius of meromorphicity]
The radius of meromorphicity of a power series $g(z) =\sum_{j= 0}^\infty b_j z^j$ is the largest 
radius $r>0$ with the property that there is polynomial $p$ of finite degree with $p(z) g(z)$ holomorphic on $|z|<r$.
\end{definition}

In Theorem \ref{radius} we showed that, for $a_j$ satisfying the assumptions there,  $f (z) = \sum_{j=0}^\infty a_j z^j$ has natural boundary. The following translates this into $1/f$:
\begin{lemma} \label{1/f}
If $f$ has natural boundary at $|z|=1$ then  $1/f$ has radius of meromorphicity $1$.
\end{lemma}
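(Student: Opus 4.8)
The plan is to prove the two inequalities $\rho \geq 1$ and $\rho \leq 1$, where $\rho$ denotes the radius of meromorphicity of $1/f$. First note that the hypothesis forces the radius of convergence of $f$ to be exactly $1$, so $f$ is holomorphic on $\mathbb{D}$; moreover $f \not\equiv 0$, and in the intended application $f(0) = a_0 \neq 0$, so that $1/f$ is a genuine power series near the origin. As usual the polynomial $p$ in the definition of radius of meromorphicity is taken with $p \not\equiv 0$, since otherwise the notion is vacuous.

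For $\rho \geq 1$, fix $0 < r < 1$. The closed disc $\{|z| \leq r\}$ is a compact subset of $\mathbb{D}$, so $f$, being holomorphic and not identically zero there, has only finitely many zeros $z_1, \dots, z_k$ in $\{|z| < r\}$, with multiplicities $m_1, \dots, m_k$. Put $p(z) = \prod_{j=1}^k (z - z_j)^{m_j}$. Then $p/f$ is holomorphic on $\{|z| < r\} \setminus \{z_1, \dots, z_k\}$ and has a removable singularity at each $z_j$, hence $p \cdot (1/f)$ extends holomorphically to $\{|z| < r\}$; thus $\rho \geq r$. Letting $r \uparrow 1$ gives $\rho \geq 1$.

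For $\rho \leq 1$, assume towards a contradiction that there are $r > 1$ and a polynomial $p \not\equiv 0$ with $h := p/f$ holomorphic on $\{|z| < r\}$. Since $p \not\equiv 0$ and $f \not\equiv 0$, we have $h \not\equiv 0$, so the zero set of $h$ is closed and discrete in $\{|z| < r\}$; as $\{|z| = 1\}$ is a compact subset of $\{|z| < r\}$, only finitely many zeros of $h$ lie on the unit circle. Choose $\zeta$ with $|\zeta| = 1$ and $h(\zeta) \neq 0$, and then $\varepsilon > 0$ small enough that the open disc $D := D(\zeta, \varepsilon)$ satisfies $D \subset \{|z| < r\}$ and $h$ is zero-free on $D$. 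On $D$ the function $F := p/h$ is holomorphic. On the nonempty open set $D \cap \mathbb{D}$ the identity $f h = p$ holds (it holds where $f \neq 0$, hence everywhere on $\mathbb{D}$ by continuity), and since $h \neq 0$ on $D$ this forces $F = f$ on $D \cap \mathbb{D}$. Thus $F$ furnishes a holomorphic continuation of $f$ through the arc $D \cap \{|z| = 1\}$, contradicting the assumption that $|z| = 1$ is a natural boundary of $f$. Hence $\rho \leq 1$, and combined with the first step, $\rho = 1$.

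The one step requiring care is the final contradiction: one must notice that even a merely \emph{meromorphic} continuation of $f$ across the unit circle is automatically \emph{holomorphic} near all but finitely many boundary points, since poles are isolated, and a single such point already violates the definition of a natural boundary. The rest is a routine compactness argument together with elementary bookkeeping of the zeros of $f$ and of $h$; note that the full strength of the hypothesis — natural boundary, not merely radius of convergence $1$ — enters only here, but is indispensable.
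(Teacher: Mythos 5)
Your proof is correct and follows essentially the same route as the paper's: establish $\rho \geq 1$ by observing that $1/f$ is meromorphic on $\mathbb{D}$ (so a polynomial clears the finitely many poles inside any disc of radius $r<1$), and establish $\rho \leq 1$ by noting that a polynomial $p$ with $p/f$ holomorphic on $|z|<r$ for some $r>1$ would yield a holomorphic continuation of $f$ across a boundary arc at any point where $p/f$ does not vanish. The paper states both steps somewhat more tersely (citing a reference for the first), whereas you spell out the zero-counting and the continuity argument $fh=p$; the substance is identical.
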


\begin{proof}
As $f$ is holomorphic on $|z|<1$, $1/f$ is meromorphic there, cf. \cite[Remarks 4.2.5]{AN}, therefore the radius of meromorphicity of $1/f$ is at least $1$. 
If it were more than $1$ there would exist polynomial $P(z)$ such that $P(z)/f(z)$ would be holomorphic with radius of convergence $r>1$ and $f(z)/P(z)$ 
would be meromorphic on $|z|<r$. In particular, there would exist neighborhoods of points on the unit circle where $f/P$, and therefore $f$, would be holomorphic. 
Then $|z|=1$ would not be the natural boundary of $f$.
\end{proof}
 
Now recall Edrei's deterministic result \cite[Theorem 1]{E} that we shall apply for $1/f$:

\begin{theorem}[Edrei] \label{EMain}
Let $g(z) = \sum_{j=0}^\infty b_j z^j$ be the power series expansion at $0$ of a function with radius of meromorphicity $0<\tau <\infty$. Then for any $n \geq 0$ there exists $m$-subsequence of Pad\'e numerators $P^{(g)}_{mn}$ of $g$ whose zeros cluster  uniformly around the circle of radius $\sigma_n$, defined as the largest radius of a disc centered at the origin containing no more than $n$ poles of $f$.
\end{theorem}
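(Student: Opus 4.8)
The plan is to reduce the whole statement to a single estimate on the Erd\H os--Tur\'an ratio of the Pad\'e numerator. First I would normalise: rescaling $z\mapsto\sigma_n z$ we may assume $\sigma_n=1$ (the $[m,n]$-Pad\'e numerator of $g(\sigma_n z)$ is $P^{(g)}_{mn}(\sigma_n z)$, whose zeros cluster around $|z|=1$ exactly when those of $P^{(g)}_{mn}$ cluster around $|z|=\sigma_n$), and, dividing out the leading power of $z$, we may assume $g(0)\neq0$, so that the formulas \eqref{PQformulas} hold whenever the relevant Toeplitz determinants do not vanish. With these normalisations, Propositions \ref{prop:J2} and \ref{prop:ET} (or Proposition \ref{prop:disc}) show that uniform clustering of the zeros of $P^{(g)}_{mn}$ around $|z|=1$, in the sense of Definition \ref{Clustering defintion}, follows as soon as one produces an infinite set $S\subseteq\mathbb N$ along which $\alpha_0\alpha_m\neq0$ (here $\alpha_0,\dots,\alpha_m$ are the coefficients of $P^{(g)}_{mn}$) and
\[
\frac1m\log L\!\left(P^{(g)}_{mn}\right)\ \longrightarrow\ 0,\qquad m\to\infty,\ m\in S .
\]

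Next I would estimate $L(P^{(g)}_{mn})$ through Toeplitz determinants exactly along the chain of inequalities producing \eqref{eq:2-7}, with the coefficients $(b_j)$ of $g$ in place of $(a_j)$; two inputs are then needed. The first is Hadamard's classical determinantal asymptotics: if $\zeta_1,\zeta_2,\dots$ are the poles of $g$ with multiplicity in nondecreasing modulus, then $\sigma_n=|\zeta_{n+1}|=1$ and $\limsup_m|\det A_m^{(k)}|^{1/m}=|\zeta_1\cdots\zeta_k|^{-1}$ for $k=n,n+1$; the two rates coincide (since $|\zeta_{n+1}|=1$), so the top coefficient $\alpha_m=\det A_m^{(n+1)}/\det A_m^{(n)}$ has $|\alpha_m|^{1/m}\to1$, which controls the clustering from outside. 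The second is the structural fact underlying de Montessus de Ballore: the equations \eqref{eq:Toep-2} force $Q^{(g)}_{mn}$ to nearly annihilate the principal parts of $g$ at the poles lying in $|z|<\sigma_n$, so that $P^{(g)}_{mn}$, which by \eqref{eq:F-def} is just the $m$-th Taylor polynomial of $Q^{(g)}_{mn}g$, inherits coefficients of subexponential order $e^{o(j)}$ rather than the order $|\zeta_1|^{-j}$ that the raw linear system \eqref{eq:Toep-1} a priori allows. Together these give $\sum_{j\leq m}|\alpha_j|=e^{o(m)}$ and $|\alpha_0\alpha_m|=e^{o(m)}$, hence $\tfrac1m\log L(P^{(g)}_{mn})\to0$.

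The main obstacle, and the part that occupies the bulk of \cite{E}, is that the two inputs above are only available along a suitable subsequence $S$: one needs $\det A_m^{(n)}\neq0$ and $\det A_m^{(n+1)}\neq0$ for $m\in S$ (so that \eqref{PQformulas} is meaningful, $Q^{(g)}_{mn}$ genuinely has degree $n$, and the de Montessus mechanism applies), together with the realisation of the $\limsup$ growth rates along $S$. In the ``non-degenerate'' case --- the open disc $|z|<\sigma_n$ contains exactly $n$ poles of $g$ --- this holds for all large $m$ and one is done; but when poles accumulate on $|z|=\sigma_n$ the disc $|z|<\sigma_n$ may contain fewer than $n$ poles, $\det A_m^{(n)}$ can vanish for infinitely many $m$, $Q^{(g)}_{mn}$ develops ``spurious'' roots, and the scheme must be run along a carefully chosen gap-avoiding subsequence --- precisely the non-vanishing of Toeplitz determinants that, in the probabilistic setting of the present paper, is instead supplied by the quantitative invertibility bound of Proposition \ref{prop:sb-det}. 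Once $S$ is in hand, combining the two inputs with the bound from \eqref{eq:2-7} gives $\tfrac1m\log L(P^{(g)}_{mn})\to0$ along $S$, and Propositions \ref{prop:J2} and \ref{prop:ET} then deliver the radial and angular clustering of the zeros of $P^{(g)}_{mn}$ around $|z|=1$; undoing the rescaling yields the assertion at radius $\sigma_n$.
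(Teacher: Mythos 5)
The paper does not prove Theorem~\ref{EMain}; it is quoted verbatim as a known deterministic theorem of Edrei \cite[Theorem~1]{E} (recasting Hadamard \cite{Had1892} and generalizing Szeg\"o \cite{S}) and is used downstream as a black box, in Theorem~\ref{polecluster}. So there is no in-paper proof to compare your sketch against; I can only judge whether it is a plausible reconstruction of Edrei's argument.

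As a roadmap it is on the right track, and you have correctly located the crux. Edrei does reduce to an Erd\H os--Tur\'an ratio estimate (he reproves Proposition~\ref{prop:J2} on \cite[p.~264]{E}), does express the Pad\'e coefficients through the Toeplitz determinants $\det A_m^{(k)}$, and does rely on Hadamard's growth rates $\limsup_m |\det A_m^{(k)}|^{1/m}=|\zeta_1\cdots\zeta_k|^{-1}$. You have also correctly identified that the real work is the subsequence extraction: in the degenerate configuration, with fewer than $n$ poles strictly inside $|z|<\sigma_n$ and poles accumulating on the circle, $\det A_m^{(n)}$ can vanish for infinitely many $m$, and the bulk of \cite{E} is devoted to producing an $S$ along which the relevant determinants are nonzero \emph{and} realize the $\limsup$ rates simultaneously --- the deterministic analogue of what Proposition~\ref{prop:sb-det} supplies in the random setting. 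That observation matches exactly what the Introduction of the paper says about Edrei's proof.

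There is, however, an internal inconsistency in your middle step. You propose to run ``the chain of inequalities producing \eqref{eq:2-7}'' with $(b_j)$ in place of $(a_j)$. But that chain carries the factor $\sum_{j\le m}|b_j|$, which for a meromorphic $g$ with $|\zeta_1|<\sigma_n=1$ grows like $|\zeta_1|^{-m}$. Inserting the Hadamard rates into \eqref{eq:2-7} then yields
\[
\tfrac1m\log L(P_{mn}) \ \longrightarrow\ (n+1)\log\tfrac{1}{|\zeta_1|}-\sum_{j=1}^{n}\log\tfrac{1}{|\zeta_j|}\ >\ 0
\]
unless all $n+1$ poles lie on $|z|=1$, so \eqref{eq:2-7} is genuinely too crude for the meromorphic case. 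The de Montessus mechanism you then invoke is the correct input, but it replaces the \eqref{eq:2-7} chain rather than refining it: one must quantify directly how the near-annihilation of the principal parts by $Q_{mn}$ cancels the exponential growth of the Taylor coefficients of $Q_{mn}g$. That quantification, together with the subsequence lemma, is essentially all of Edrei's proof; leaving both as black boxes means you have written a faithful outline of the structure of \cite{E}, but not a self-contained proof.
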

In the notation of Definition \ref{Clustering defintion}, this means that there is subsequence $m_l$ such that 
\begin{equation}
       \nu_{P_{m_ln}}(R(\sigma_n, \rho) )\to 1, 
       \quad 
       \nu_{P_{m_ln}}(S(\theta, \phi)) \to \frac{\phi - \theta}{2\pi}, 
       \quad  l \to \infty,
\end{equation}
for all $\rho>0$, and all $0 \leq \theta < \phi < 2\pi$.

\begin{theorem} [Clustering of poles for Pad\'e approximant] \label{polecluster}
Let $f$ be as in Theorem \ref{roots} and $R_m(f_\omega)< 1$ the radius of the largest disc that contains no more than $m$ zeros of $f_\omega(z)$, also defined in Theorem \ref{roots}. 
Then for almost all $\omega$ the zeros of the Pad\'e denominator $Q_{mn}^{(f_\omega)}$ cluster uniformly, up to subsequence as $n \to \infty$, at $|z| = R_m(f_\omega)<1$, with $1 - R_m(f_\omega) = O(\log m /m)$.
\end{theorem}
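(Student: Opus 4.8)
The plan is to apply Edrei's deterministic Theorem~\ref{EMain} pathwise to the reciprocal series $1/f_\omega$ and to transfer the conclusion back to $f_\omega$ through the algebraic identity $Q_{mn}^{(f)}=P_{nm}^{(1/f)}$ recorded above, reading off the rate from the work already done in Theorem~\ref{roots}. First I would isolate a single full-measure event carrying all the structural inputs. The hypotheses here (mean zero, unit variance, ${\mathcal L}(a_j,\varepsilon)\leq K\varepsilon$) imply those of Theorem~\ref{radius}, since $\mathbb E|a_j|\leq(\mathbb E[a_j^2])^{1/2}=1$ and ${\mathcal L}(a_j,\varepsilon)\leq K\varepsilon<1$ for $\varepsilon$ small; so there is $\Omega_1$ with $\mathbb P(\Omega_1)=1$ on which $f_\omega$ has radius of convergence $1$ and $|z|=1$ is a natural boundary, whence Lemma~\ref{1/f} gives $1/f_\omega$ radius of meromorphicity exactly $1$ for every $\omega\in\Omega_1$. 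Since ${\mathcal L}(a_0,\varepsilon)\leq K\varepsilon\to 0$ forces $a_0\neq 0$ almost surely, on a full-measure subset $\Omega_2\subseteq\Omega_1$ the function $1/f_\omega$ is holomorphic and nonvanishing at $0$, hence has a genuine power-series expansion at $0$, and its poles in $\mathbb D$, counted with multiplicity, are precisely the zeros of $f_\omega$ in $\mathbb D$. Finally, Theorem~\ref{roots}(2) provides a full-measure $\Omega_3\subseteq\Omega_2$ on which $f_\omega$ has infinitely many zeros in $\mathbb D$, $0<R_m(f_\omega)<1$ for all $m$, and $1-R_m(f_\omega)=O(\log m/m)$.

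Next I would fix $\omega\in\Omega_3$ and a value of $m\geq 0$, and apply Theorem~\ref{EMain} to $g=1/f_\omega$, which has radius of meromorphicity $\tau=1\in(0,\infty)$: taking Edrei's denominator-degree parameter equal to our fixed $m$ and letting his numerator-degree parameter vary, we obtain a subsequence $n_l\to\infty$ along which the zeros of the Pad\'e numerators $P^{(1/f_\omega)}_{n_l m}$ cluster uniformly, in the sense of Definition~\ref{Clustering defintion}, around the circle of radius $\sigma_m$, the largest radius of a disc centered at $0$ with at most $m$ poles of $1/f_\omega$. By the identification of the poles of $1/f_\omega$ in $\mathbb D$ with the zeros of $f_\omega$ there, with matching multiplicities, one has $\sigma_m=R_m(f_\omega)$, which lies in $(0,1)$ by the choice of $\Omega_3$; and since $P^{(1/f_\omega)}_{n_l m}=Q^{(f_\omega)}_{m n_l}$, this is exactly the asserted uniform clustering of the zeros of $Q^{(f_\omega)}_{m n_l}$ around $|z|=R_m(f_\omega)$ as $l\to\infty$. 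The stated rate $1-R_m(f_\omega)=O(\log m/m)$ is then nothing but estimate~\eqref{bigO} of Theorem~\ref{roots}(2) with $s$ renamed $m$, valid on $\Omega_3$.

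I do not anticipate a substantial obstacle, since the real work is carried by Theorems~\ref{radius}, \ref{roots} and~\ref{EMain}; the remaining delicate points are bookkeeping. One must match Edrei's index convention to ours via $Q_{mn}^{(f)}=P_{nm}^{(1/f)}$, check that the order of each pole of $1/f_\omega$ equals the order of the corresponding zero of $f_\omega$ so that the condition of having at most $m$ poles reproduces the defining property of $R_m(f_\omega)$ verbatim, and verify that Edrei's hypotheses ($0<\tau<\infty$, and enough poles for $\sigma_m$ to be well defined with $\sigma_m<\tau$) hold for every $\omega$ in the chosen full-measure set --- all of which is exactly what Theorems~\ref{radius} and~\ref{roots} deliver, the pathwise invocation of Edrei's theorem then being immediate.
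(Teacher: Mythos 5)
Your proposal is correct and follows essentially the same route as the paper's proof: apply Edrei's Theorem~\ref{EMain} pathwise to $1/f_\omega$ (whose radius of meromorphicity is $1$ by Theorem~\ref{radius} and Lemma~\ref{1/f}), identify $\sigma_m$ with $R_m(f_\omega)$, use the identity $Q_{mn}^{(f)}=P_{nm}^{(1/f)}$ to transfer the clustering, and read the rate off Theorem~\ref{roots}(2). Your version is a bit more careful about isolating the full-measure events and matching Edrei's index conventions, but the substance is identical to the paper's.
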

\begin{proof}
By Lemma \ref{1/f}, the radius of meromorphicity of $1/f$ is almost surely $1$. And the largest radius that contains no more than $m$ poles of $1/f$ is $R_m$. 
Therefore, by Theorem \ref{EMain}, the Pad\'e numerators $P_{ns}^{(1/f)}$ of $1/f$ cluster uniformly, 
as $n \to \infty$ at $|z|=R_m$. It remains to notice that the Pad\'e numerator $P_{nm}^{(1/f)}$ of $1/f$ is the same as the Pad\'e denominator $Q_{mn}^{(f)}$ of $f$, see \cite[p.\ 216, Theorem 22(i)]{Gra}. Finally use Theorem \ref{roots}.
\end{proof}

\section{Logarithmically concave data} \label{LogConcave}

In this section we return to the clustering of zeros of the Pad\'e approximant initiated in Section \ref{non-as} and examine the case when the $a_j$'s in $\sum_{j=1}^N a_j z^j$ are not necessarily independent. Our focus is on the fairly large class of (isotropic) log-concave vectors. 

Recall that a 
random vector $X$ on $\mathbb R^n$ is log-concave if for any two compact sets $K,L$, and for any $0<\lambda<1$, we have
	\begin{align} \label{eq:log-def}
		\mathbb P( X\in (1-\lambda) K + \lambda L) \geq [\mathbb P(X\in K)]^{1-\lambda} [\mathbb P(X\in L)|^\lambda.
	\end{align}
It is well known, see e.g., \cite{Bor75}, \cite[Theorem 2.1.2]{BGVV}, that if $X$ is a log-concave random vector on $\mathbb R^n$, non-degenerate in the sense that $\mathbb P(X \in H)<1$ for every hyperplane $H$, then the distribution of $X$ 
has density $f_X$, log-concave on its support, i.e.
	\begin{align}
		f_X((1-\lambda)x + \lambda y) \geq f_X(x)^{1-\lambda} f_X(y)^\lambda, \quad \text{for all}\ x,y, \quad 0<\lambda <1.
	\end{align} 
See \cite{BGVV} for background material on log-concave distributions and their geometric properties.

From Section  \ref{non-as}, notice that in establishing non-asymptotic clustering of roots 
the independence of the entries of the data vector ${\bf a} = (a_0,\ldots, a_N)$ is barely used. The properties required to establish the counterpart 
of Theorem \ref{thm:adv} for log-concave vectors ${\bf a} \in \mathbb R^{N+1}$ can be summarized as follows:

\begin{itemize}
	\item [P1:] Upper bound for ${\mathcal L} (a_j ,\varepsilon)$, $\varepsilon>0$.
	
	\item [P2:] Upper bound for $\mathbb P( \| P_\sigma ({\bf a}) \|_1 > t )$, $t>0$, where $\sigma \subset \{0,1,\ldots,N\}$ and $P_\sigma$ stands for the coordinate
	projection onto $\mathbb R^\sigma = {\rm span}\{e_i : i\in \sigma\}$.
	
	\item [P3:] Upper bound for $\mathbb P( |\det A_m^{(n)}|^{1/n} < \varepsilon)$, $\varepsilon>0$, for $A_m^{(n)}$ the  $n \times n$ Toeplitz matrix associated to $\bf a$ as in \eqref{assocmatr}:
	\begin{equation} 
     A_m^{(n)} = \begin{bmatrix} 
			 a_{m} & a_{m-1} & \ldots & a_{m- n+1} \\
			 a_{m+1} & a_{m} & \ldots & a_{m-n+2} \\
			\vdots & \vdots & \ddots  & \vdots \\
		 a_{m+n -1} & a_{m+n -2} & \ldots & a_{m}
		\end{bmatrix}.
\end{equation}

\end{itemize}

Properties P1 and P2 will follow easily is subsections \ref{logconanti} and \ref{logconlarge}, respectively. We shall rely on the following fact which can be directly verified from the definitions:

\begin{fact} \label{fact:hered}
	Let $X$ be a log-concave vector on $\mathbb R^n$ and let $T:\mathbb R^n \to \mathbb R^k$ be a linear mapping. Then, the vector $TX$ is 
	also log-concave. In particular, if $E$ is a $k$-dimensional subspace of $\mathbb R^n$, the marginal $P_E X$ is log-concave, where
	$P_E$ denotes the orthogonal projection onto $E$. Furthermore, if $X$ is isotropic, so is $P_EX$. 
\end{fact}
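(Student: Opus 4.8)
The plan is to argue directly from the set-theoretic definition \eqref{eq:log-def}, which has the advantage of not requiring $X$ to have a density (the image $TX$ may well be supported on a proper affine subspace of $\mathbb R^k$). Write $\mu$ for the law of $X$ on $\mathbb R^n$ and $\nu = T_\ast\mu$ for the law of $TX$ on $\mathbb R^k$, so that $\nu(A) = \mu(T^{-1}(A))$ for Borel $A$, and we must produce the bound in \eqref{eq:log-def} for $\nu$ and arbitrary compact $A, B \subset \mathbb R^k$ and $\lambda \in (0,1)$. The one observation that drives everything is the linearity inclusion
\[
(1-\lambda)\,T^{-1}(A) + \lambda\, T^{-1}(B) \ \subseteq\ T^{-1}\big((1-\lambda)A + \lambda B\big),
\]
which holds because if $u \in T^{-1}(A)$ and $v \in T^{-1}(B)$ then $T\big((1-\lambda)u + \lambda v\big) = (1-\lambda)Tu + \lambda Tv \in (1-\lambda)A + \lambda B$. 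If $T^{-1}(A)$ and $T^{-1}(B)$ happened to be compact we could plug them straight into \eqref{eq:log-def} and be done; the only genuine issue is that $T$ need not be injective, so these preimages are merely closed.

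To handle that, I would exhaust from the inside: choose compact sets $K_1 \subseteq K_2 \subseteq \cdots$ with $\bigcup_j K_j = T^{-1}(A)$ and $L_1 \subseteq L_2 \subseteq \cdots$ with $\bigcup_j L_j = T^{-1}(B)$. Each $(1-\lambda)K_j + \lambda L_j$ is compact and, by the inclusion above, sits inside $T^{-1}\big((1-\lambda)A + \lambda B\big)$, while the sets $(1-\lambda)K_j + \lambda L_j$ increase to $(1-\lambda)T^{-1}(A) + \lambda T^{-1}(B)$. Applying \eqref{eq:log-def} to the pair $K_j, L_j$ gives
\[
\mu\big((1-\lambda)K_j + \lambda L_j\big) \ \geq\ \mu(K_j)^{1-\lambda}\,\mu(L_j)^{\lambda},
\]
and letting $j \to \infty$, continuity of $\mu$ from below together with the inclusion upgrades this to $\nu\big((1-\lambda)A + \lambda B\big) \geq \nu(A)^{1-\lambda}\nu(B)^{\lambda}$. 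Hence $TX$ is log-concave, and the statement about $P_E X$ is just the special case $T = P_E$ read inside $E \cong \mathbb R^k$. The only place requiring any care in the whole argument is this passage from compact preimages to closed ones in the non-injective case, and the inner approximation disposes of it; everything else is unwinding definitions.

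Finally, to see that $P_E X$ inherits isotropy when $X$ is isotropic, I would compute the first two moments: the mean is $\mathbb E[P_E X] = P_E\,\mathbb E[X] = 0$ since $X$ is centered, and for the covariance take $u, v \in E$ and use $P_E u = u$, $P_E v = v$ together with self-adjointness of $P_E$ to get
\[
\mathbb E\big[\langle P_E X, u\rangle\langle P_E X, v\rangle\big] = \mathbb E\big[\langle X, P_E u\rangle\langle X, P_E v\rangle\big] = \mathbb E\big[\langle X, u\rangle\langle X, v\rangle\big] = \langle u, v\rangle,
\]
the last equality by isotropy of $X$. Thus the covariance operator of $P_E X$ is the identity on $E$, i.e. $P_E X$ is isotropic as a random vector in $E$, completing the proof.
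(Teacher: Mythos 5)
The paper gives no proof here---it merely asserts that the Fact ``can be directly verified from the definitions''---and your argument supplies exactly such a verification, correctly. The driving observation is the linearity inclusion $(1-\lambda)T^{-1}(A)+\lambda T^{-1}(B)\subseteq T^{-1}\bigl((1-\lambda)A+\lambda B\bigr)$; since preimages of compacta under a non-injective $T$ are only closed, your inner exhaustion by compacta (e.g.\ $K_j=T^{-1}(A)\cap \overline{B(0,j)}$) together with continuity of $\mu$ from below is the right fix. It is worth noting that your chain of inequalities never needs to evaluate $\mu$ on the Minkowski sum $(1-\lambda)T^{-1}(A)+\lambda T^{-1}(B)$ itself, whose Borel measurability would otherwise be a nuisance: you pass directly to the larger Borel set $T^{-1}\bigl((1-\lambda)A+\lambda B\bigr)$. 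The isotropy computation for $P_E X$ is also standard and correct. As a remark on alternatives: when $X$ is non-degenerate (the case the paper actually uses), one can instead invoke Pr\'ekopa's marginal theorem to see that integrating the log-concave density $f_X$ over the fibers of $T$ yields a log-concave density for $TX$; your set-theoretic route is more elementary, needs no density, and covers the degenerate case for free.
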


The more involved property P3  is established in subsection \ref{logcontoep}. 

\subsection{Anti-concentration for coordinates of log-concave vectors} \label{logconanti}

We recall first that 1-dimensional marginals of an isotropic, 
log-concave vector $X$ satisfy anti-concentration bounds of the following form:

\begin{lemma}
	Let $X$ be an isotropic, log-concave random vector on $\mathbb R^n$. Then, for any $\theta \in \mathbb R^n$ with $\|\theta\|_2=1$ we have 
	\begin{align}
		{\mathcal L} (\langle X, \theta \rangle, \varepsilon) \leq C\varepsilon,
	\end{align}
	for all $\varepsilon>0$, where $C>0$ is a universal constant. 
\end{lemma}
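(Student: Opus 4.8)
The statement to prove is that for an isotropic log-concave random vector $X$ on $\mathbb R^n$ and any unit vector $\theta$, the one-dimensional marginal $\langle X,\theta\rangle$ satisfies ${\mathcal L}(\langle X,\theta\rangle,\varepsilon)\le C\varepsilon$. The plan is to reduce the problem to a genuinely one-dimensional statement about log-concave densities on $\mathbb R$. By Fact \ref{fact:hered}, the random variable $\xi:=\langle X,\theta\rangle$ is itself log-concave and isotropic, i.e.\ $\mathbb E\xi=0$ and $\mathbb E\xi^2=1$; so it suffices to show that any log-concave random variable $\xi$ on $\mathbb R$ with variance $1$ has a density bounded by a universal constant. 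Once we have $\|f_\xi\|_\infty\le C$, the claim follows immediately: for any $t\in\mathbb R$, $\mathbb P(|\xi-t|<\varepsilon)=\int_{t-\varepsilon}^{t+\varepsilon}f_\xi\le 2C\varepsilon$, and taking the supremum over $t$ gives ${\mathcal L}(\xi,\varepsilon)\le 2C\varepsilon$.

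The core step is therefore the bound $\|f_\xi\|_\infty\le C$ for an isotropic log-concave density $f_\xi$ on the line. I would argue as follows. If $\xi$ is non-degenerate (variance $1$ rules out a point mass), its density $g:=f_\xi$ is log-concave on an interval $I$ (its support), so $g=e^{-V}$ with $V$ convex on $I$. Let $M=\|g\|_\infty=g(x_0)$ for some mode $x_0$. By log-concavity, on one side of $x_0$ the function $g$ dominates the linear-exponential profile: there is a half-line emanating from $x_0$ on which $g(x)\ge M\, e^{-\lambda|x-x_0|}$ fails to be the right comparison; instead the clean route is to use that a log-concave function is \emph{unimodal}, hence $g(x)\ge \min(g(a),g(b))$ for $x\in[a,b]$. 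A standard and quantitatively clean way: because $g$ is unimodal with maximum $M$, for any $h>0$ the super-level set $\{g\ge M/e\}$ is an interval of some length $\ell$, and on it $g\ge M/e$, so $1=\int g\ge (M/e)\cdot \ell$, giving $\ell\le e/M$. On the other hand one shows the variance is controlled below by $\ell$: since $\{g\ge M/e\}$ has $g$-measure at least $1-2/M$ (the tails $\{g<M/e\}$ are two log-concave tails whose total mass is small when $M$ is large — each tail integrates to at most $(M/e)\cdot(\text{scale})$ and log-concavity forces the scale to be $O(1/M)$), the distribution is essentially supported on an interval of length $O(1/M)$, forcing $\mathrm{Var}(\xi)=O(1/M^2)$; combined with $\mathrm{Var}(\xi)=1$ this yields $M\le C$.

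Equivalently — and this is the version I would actually write, since it avoids fiddly tail estimates — one can invoke the standard fact (see e.g.\ \cite{BGVV}) that for an isotropic log-concave random variable $\xi$ on $\mathbb R$ one has the two-sided bound $c\le \|f_\xi\|_\infty\le C$ with universal constants, a consequence of Hensley's lemma / the one-dimensional theory of log-concave measures. Only the upper bound is needed here. The reduction to the one-dimensional isotropic case via Fact \ref{fact:hered} is completely routine; the substantive content is packaged in that classical density bound.

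The main obstacle, such as it is, is purely expository: deciding whether to cite the isotropic-log-concave density bound as a black box from \cite{BGVV} or to reprove the elementary $\|f_\xi\|_\infty\le C$ inequality inline. The self-contained proof requires a short but careful unimodality-plus-tail argument to turn ``mass one'' and ``variance one'' into an upper bound on the mode; the only place one must be slightly attentive is ensuring the constants are genuinely universal and that the degenerate case (which isotropy excludes) is dispatched. Given the paper's stated willingness to cite \cite{BGVV} for log-concave background, I expect the cleanest write-up simply quotes the bound and performs the two-line deduction ${\mathcal L}(\langle X,\theta\rangle,\varepsilon)=\sup_t\int_{t-\varepsilon}^{t+\varepsilon}f_{\langle X,\theta\rangle}\le 2\|f_{\langle X,\theta\rangle}\|_\infty\,\varepsilon\le C\varepsilon$.
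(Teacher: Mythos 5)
Your proposal is correct and takes essentially the same route as the paper: reduce to the one-dimensional case via Fact~\ref{fact:hered}, invoke the standard bound $\|f_\theta\|_\infty \leq C$ for an isotropic log-concave density on $\mathbb R$ (the paper cites \cite{BGVV}, splitting it as $\|f_\theta\|_\infty \leq e f_\theta(0)$ and $f_\theta(0) \leq C$), and conclude by integrating the density over an interval of length $2\varepsilon$. Your inline unimodality-plus-tail sketch is hand-wavy as written but fixable; since you ultimately opt to cite \cite{BGVV}, the final write-up coincides with the paper's.
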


\noindent {\it Proof.} Fix $\theta\in \mathbb R^n$ with $\|\theta \|_2=1$. Then, in view of Fact \ref{fact:hered},
the random variable $P_\theta X:= \langle X, \theta \rangle$ has mean zero, variance 1, 
and is log-concave. Let $f_\theta$ be its density function. It is well known, see e.g. \cite{Frad}, \cite[Theorem 2.2.2]{BGVV}, 
that $\|f_\theta\|_\infty \leq e f_\theta(0)$. Also, \cite[Theorem 2.3.3]{BGVV} implies that $f_\theta(0) \leq C$. Thus, for any $a\in \mathbb R$ we may write
	\begin{align}
		\mathbb P( |\langle X,\theta \rangle - a| <\varepsilon) = \int_{a- \varepsilon}^{a+\varepsilon} f_\theta(t) \, dt \leq 2\varepsilon \|f_\theta\|_\infty,
	\end{align}
and the assertion follows. \prend


\subsection{Large deviation estimates for projections of log-concave vectors} \label{logconlarge}

As noticed, the projections of isotropic (log-concave) vectors are also isotropic (and log-concave).
Therefore, we may simply bound
	\begin{align} \label{eq:w-dev}
		\mathbb P \left( \|P_\sigma X\|_1 > t \right) \leq \frac{|\sigma|}{t}, \quad \forall t>0,
	\end{align} 
by using Markov's inequality and the basic fact that $\mathbb E|X_i| \leq 1$.

\begin{note} For isotropic log-concave vectors stronger large deviation estimates are available: The celebrated 
deviation inequality of Paouris \cite{Pa-dev} implies
	\begin{align*}
		\mathbb P \left( \|P_\sigma X\|_2 \geq Ct \sqrt{|\sigma|} \right) \leq C\exp(-t\sqrt{|\sigma|}), \quad \forall \, t\geq 1,
	\end{align*} 
where $C>0$ is a universal constant. (See also \cite{LS} for kindred estimates with respect to other $\ell_p$-norms.) Nonetheless,
since this event will later be unified with a less rare event, the weaker probability of the latter will persist. Hence, we have argued with the trivial bound
\eqref{eq:w-dev} which is adequate for our purposes.
\end{note}


\subsection{Determinant of Toeplitz matrices with log-concave symbol} \label{logcontoep}

The rest of the section is devoted to proving the following estimate:

\begin{theorem} \label{thm:4-main}
	Let ${\bf a} = (a_0, \ldots, a_{2n-2})$ be an isotropic, log-concave random vector in $\mathbb R^N$, $N:=2n-1$, and let $T({\bf a})$ be the  $n\times n$ Toeplitz matrix
\begin{equation}
       T({\bf a}) =    \begin{bmatrix} 
			 a_{n-1} & a_{n-2} & \ldots & a_{0} \\
			 a_{n} & a_{n-1} & \ldots & a_{1} \\
			\vdots & \vdots & \ddots  & \vdots \\
		 a_{2n -2} & a_{2n -3} & \ldots & a_{n-1}
		\end{bmatrix}.      
\end{equation} 
Then the 
	following anti-concetration bound holds: 
		\begin{align}
			\mathbb P( |\det T({\bf a})|^{1/n} \leq \varepsilon) \leq Cn^c \varepsilon,
		\end{align} for all $\varepsilon>0$, where $C,c>0$ are universal constants.
\end{theorem}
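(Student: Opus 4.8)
The plan is to reduce the Toeplitz determinant estimate to a one-dimensional anti-concentration statement by conditioning on all but one coordinate, exactly as in the proof of Proposition \ref{prop:sb-det}, and then to recover the lost independence using the structure of log-concave measures. Writing $T({\bf a}) = a_{n-1} I + B$ where $B := \sum_{k \neq n-1} a_k B_k$ is a Toeplitz matrix supported off the main diagonal, the key point is that $\det T({\bf a}) = P_{-B}(a_{n-1})$, a monic polynomial of degree $n$ in the single variable $a_{n-1}$ whose other coefficients depend only on $(a_k)_{k\neq n-1}$. If the coordinates were independent, Lemma \ref{small polynomial values} applied conditionally would finish the argument. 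They are not, so the first real step is to pass to the conditional law of $a_{n-1}$ given $(a_k)_{k\neq n-1}$.

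The crucial structural fact is that conditional distributions of a log-concave vector are again log-concave: conditioning a log-concave density on a fixed value of the remaining coordinates yields a one-dimensional density that is log-concave on its support (a density of the form $t \mapsto f_{\bf a}(t, \text{fixed})$ restricted to an interval, which is log-concave in $t$). A one-dimensional log-concave density is bounded by $e$ times its value at its mode, but without normalization control we cannot directly conclude a uniform $\mathcal L$ bound for the conditional law. So instead I would argue as follows: first establish, via Fact \ref{fact:hered} and the Lemma in subsection \ref{logconanti}, that the \emph{unconditional} law of $a_{n-1}$ satisfies $\mathcal L(a_{n-1}, \varepsilon) \le C\varepsilon$; then use a disintegration / Fubini argument to transfer an averaged version of Lemma \ref{small polynomial values}. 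Concretely, for fixed realization of $(a_k)_{k\neq n-1}$ the polynomial $P_{-B}$ has roots $r_1,\dots,r_n \in \mathbb C$, and
\begin{equation}
      \{|P_{-B}(a_{n-1})| < \varepsilon^n\} \subseteq \bigcup_{j=1}^n \{|a_{n-1} - \mathrm{Re}(r_j)| < \varepsilon\}.
\end{equation}
Taking the unconditional probability of the right-hand side — which is legitimate because the $r_j$ are measurable functions of $(a_k)_{k\neq n-1}$ and hence, on each fiber, deterministic shifts — and then bounding each term by the \emph{conditional} L\'evy function and integrating out, one needs $\mathbb E_{(a_k)_{k\neq n-1}}\big[\mathcal L(a_{n-1} \mid (a_k)_{k\neq n-1}, \varepsilon)\big]$. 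But this average equals exactly $\sup_t \cdots$ dominated by the unconditional density bound only if we are careful; the clean way is: $\mathbb P(|a_{n-1} - s(\omega')| < \varepsilon) = \mathbb E_{\omega'}\,\mathbb P(|a_{n-1}-s| < \varepsilon \mid \omega')$ where $s(\omega')$ is a fixed function of the other coordinates, and on each fiber the conditional density is log-concave with the \emph{same} normalization as contributes to the unconditional isotropic density, so a Prékopa–Leindler / marginal argument gives a uniform $O(\varepsilon)$ bound fiberwise. Summing over $j=1,\dots,n$ and taking expectations then yields $\mathbb P(|\det T({\bf a})|^{1/n} < \varepsilon) \le C n \varepsilon$, which is even better than claimed; the stated $Cn^c$ leaves room for a cruder estimate.

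I expect the main obstacle to be precisely the control of the one-dimensional conditional densities of $a_{n-1}$: one must show that, for $\mu$-almost every value of $(a_k)_{k\neq n-1}$, the conditional density $g_{\omega'}(t)$ satisfies $\|g_{\omega'}\|_\infty \le C$ for a universal $C$, or at least that $\mathbb E_{\omega'}\|g_{\omega'}\|_\infty \le Cn^c$. Log-concavity of $g_{\omega'}$ gives $\|g_{\omega'}\|_\infty \le e\, g_{\omega'}(\mathrm{mode})$, but relating the mode value to something universal requires a lower bound on the "width" of $g_{\omega'}$, i.e. a conditional variance bound — and conditional variances of an isotropic log-concave vector are \emph{not} uniformly bounded below in general (thin-shell phenomena notwithstanding, individual fibers can be nearly degenerate). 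The fix is to integrate rather than take a fiberwise sup: since $\mathcal L(a_{n-1},\varepsilon) = \sup_s \mathbb E_{\omega'}[\,\mathbb P(|a_{n-1}-s|<\varepsilon\mid\omega')\,] \le C\varepsilon$ by the subsection \ref{logconanti} lemma, and since the event in the displayed inclusion, after the fiberwise root computation, is a union of $n$ events of the shifted form, one gets by Fubini
\begin{equation}
      \mathbb P\big(|\det T({\bf a})| < \varepsilon^n\big) \le \sum_{j=1}^n \mathbb P\big(|a_{n-1} - \mathrm{Re}(r_j(\omega'))| < \varepsilon\big) \le n \cdot C\varepsilon,
\end{equation}
where the last inequality uses that for any fixed measurable $s:\Omega'\to\mathbb R$, $\mathbb P(|a_{n-1}-s(\omega')|<\varepsilon) \le \mathcal L$ of... — and here is the one remaining subtlety, that $\mathbb P(|a_{n-1}-s(\omega')|<\varepsilon)$ with $s$ \emph{random} is not literally bounded by $\mathcal L(a_{n-1},\varepsilon)$; one genuinely needs the conditional density bound. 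So the honest route is the conditional one, and the real work is proving $\mathbb E_{\omega'}\|g_{\omega'}\|_\infty \le Cn^c$; I would do this by a slicing inequality relating $\|g_{\omega'}\|_\infty$ to $f_{\bf a}$ and its marginal in the $a_{n-1}$-direction, invoking the bounds $\|f_\theta\|_\infty \le e f_\theta(0)$ and $f_\theta(0)\le C$ from subsection \ref{logconanti} together with a volumetric comparison that absorbs the dimensional loss into $n^c$. Once that density bound is in hand, the rest is the conditional application of Lemma \ref{small polynomial values} and a union bound.
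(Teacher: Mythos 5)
Your approach diverges from the paper's, and it has a real gap at exactly the point you flag yourself.

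The paper does not condition. It applies the Carbery--Wright anti-concentration inequality \cite[Theorem 8]{CW}, which is a genuinely \emph{multivariate} statement: for a log-concave vector ${\bf a}$ and a degree-$n$ polynomial $P$, one has $\bigl(\mathbb{E}|P({\bf a})|\bigr)^{1/n}\,\mathbb{P}\bigl(|P({\bf a})|^{1/n}\leq \varepsilon\bigr)\leq Cn\varepsilon$. Taking $P=\det T(\cdot)$, which is homogeneous of degree $n$, the only remaining work is a lower bound on $\bigl(\mathbb{E}|\det T({\bf a})|\bigr)^{1/n}$; the paper does this in Proposition~\ref{prop:4-det} by comparing to the Gaussian case via K.~Ball's bodies $K_p(f_{\bf a})$, with the Gaussian case handled through Proposition~\ref{prop:sb-det}. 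The multivariate inequality completely sidesteps the conditioning problem, at the price of having to control a first moment of the determinant.

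Your route --- write $T({\bf a}) = a_{n-1}I + B$, condition on $(a_k)_{k\neq n-1}$, and reduce to $\mathcal{L}$ of the one-dimensional conditional law of $a_{n-1}$ --- has a gap that you identify but do not close. The fiberwise bound $\|g_{\omega'}\|_\infty\leq C$ is false in general: individual conditional slices of an isotropic log-concave density can be arbitrarily thin, so their densities can have arbitrarily large sup, and isotropicity only constrains the \emph{average} conditional variance, not each fiber. Your fallback, $\mathbb{E}_{\omega'}\|g_{\omega'}\|_\infty \leq Cn^c$, i.e.\ $\int_{\mathbb{R}^{N-1}} \sup_t f_{\bf a}(t,\omega')\,d\omega' \leq Cn^c$, is never established; the sketch ("a slicing inequality... together with a volumetric comparison that absorbs the dimensional loss into $n^c$") does not amount to an argument, and the bounds $\|f_\theta\|_\infty\leq e f_\theta(0)$ and $f_\theta(0)\leq C$ concern one-dimensional \emph{marginals}, not the sup-marginal $\sup_t f_{\bf a}(t,\cdot)$, which is a different object (the marginal $\pi(\omega')=\int f_{\bf a}(t,\omega')\,dt$ integrates to $1$, but the sup-marginal can exceed it by a factor of one over the fiber width). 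So the key technical claim needed to make your reduction work is unproved, and it is not a small omission: it is where all the difficulty concentrates, and it is precisely the difficulty that Carbery--Wright was designed to absorb. If you want to pursue a conditioning argument, you would need a quantitative version of the sup-marginal bound (which may well be provable with a polynomial loss), but the Carbery--Wright route is cleaner and is what the paper does.
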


This result will follow from \cite[Theorem 8]{CW} once we have established a lower bound for $\mathbb E|\det T({\bf a})|$. To this end, we have the following:

\begin{proposition} \label{prop:4-det}
	Let ${\bf a}$ be a vector as above and let $g\sim N(0,I_N)$. The following estimate holds:
	\begin{align}
		\left(\mathbb E|\det T({\bf a})| \right)^{1/n} \geq cn^{-3/2} \left(\mathbb E|\det T(g)| \right)^{1/n},
	\end{align} where $c>0$ is a universal constant.
\end{proposition}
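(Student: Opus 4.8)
The goal is a comparison of $\mathbb{E}|\det T(\mathbf{a})|$ with the Gaussian analogue $\mathbb{E}|\det T(g)|$, losing only a polynomial factor in $n$. The natural vehicle is the $B$-theorem / $\Psi_2$-type machinery for log-concave measures, but the cleanest route is to exploit the well-known fact that an isotropic log-concave random vector dominates, up to universal constants, a Gaussian vector at the level of linear functionals --- more precisely, one compares moments. The key observation is that $\det T(\mathbf{a})$ is a polynomial in the entries of $\mathbf{a}$, and for fixed matrix pattern it is a multilinear-type expression; but rather than expand it, I would treat $\det T(\cdot)$ as a homogeneous polynomial of degree $n$ on $\mathbb{R}^N$ and use the comparison of small-ball / moment behavior of such polynomials under log-concave versus Gaussian measures.

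\medskip

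Concretely, the plan is: First, reduce to a one-sided moment comparison. Writing $D(\mathbf{x}) = \det T(\mathbf{x})$, we want a lower bound on $\mathbb{E}|D(\mathbf{a})|$. Since $D$ is $n$-homogeneous, $\left(\mathbb{E}|D(\mathbf{a})|\right)^{1/n}$ behaves like an $L_1$-norm of an ``$n$-th root'' quantity; the standard tool is that for any polynomial $D$ of degree $n$ and any log-concave isotropic vector $\mathbf{a}$, one has reverse-H\"older (Bobkov / Nazarov--Sodin--Volberg-type) inequalities $\left(\mathbb{E}|D(\mathbf{a})|^p\right)^{1/p} \le (Cp)^n \left(\mathbb{E}|D(\mathbf{a})|\right)$ for $p\ge 1$, and matching lower distributional bounds. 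Second, I would pass through the Gaussian side: for $g\sim N(0,I_N)$, similarly $\left(\mathbb{E}|D(g)|^p\right)^{1/p} \le (Cp)^{n/2}\left(\mathbb{E}|D(g)|\right)$ by Gaussian hypercontractivity (degree-$n$ polynomial). Third --- and this is the substantive step --- I would compare $\mathbb{E}|D(\mathbf{a})|$ and $\mathbb{E}|D(g)|$ via a second-moment or $L_2$ anchor: compute or estimate $\mathbb{E}|D(\mathbf{a})|^2$ and $\mathbb{E}|D(g)|^2$. Because $\mathbf{a}$ is isotropic, all mixed moments of order $\le 2$ of the coordinates of $\mathbf{a}$ match those of $g$; however $D$ has degree $n$, so $D^2$ has degree $2n$ and genuinely sees high moments. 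The trick is instead to anchor at $L_{2/n}$ or to use that $\left(\mathbb{E}|D(\mathbf{a})|\right)^{1/n}$ and $\left(\mathbb{E}|D(g)|\right)^{1/n}$ are each comparable (up to $n^{O(1)}$) to the corresponding $\left(\mathbb{E}|D(\cdot)|^{2/n}\right)^{1/2}$, and the latter is a genuine $L_2$-type quantity in a single ``direction'' once one linearizes. Alternatively, and perhaps more robustly, use the Cauchy--Binet expansion $\det(T T^\ast) = \sum_k (\det T[k+1])^2$ already invoked in Section \ref{PTET} together with the arithmetic--geometric mean inequality to reduce to controlling $\mathbb{E}\vertiii{T(\mathbf{a})}$ from above (trivial, by isotropy and Markov, giving $\le \gamma n^{3/2}$) and $\mathbb{E}|\det(TT^\ast)|^{1/(2n)}$, and then match the Gaussian version.

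\medskip

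The $n^{-3/2}$ factor is the telltale sign of the route: it is exactly the loss from the arithmetic--geometric mean inequality applied to the $n$ singular values of an $n\times n$ Toeplitz matrix built from an isotropic vector, $\left(\prod s_i\right)^{1/n} \ge s_{\min}$ versus $\left(\sum s_i^2/n\right)^{1/2} \le \|T\|_{\rm HS}/\sqrt n$, combined with $\mathbb{E}\|T(\mathbf{a})\|_{\rm HS} \le n^{3/2}$. So I would proceed: bound $\left(\mathbb{E}|\det T(\mathbf{a})|\right)^{1/n}$ below by comparing the distribution of the least singular value $s_n(T(\mathbf{a}))$ to that of $s_n(T(g))$ --- but least-singular-value comparison for structured matrices is delicate, so the safer plan is to compare $\mathbb{E}\log|\det T(\mathbf{a})|$ with $\mathbb{E}\log|\det T(g)|$ using that $\log|\det|$ is the integral of $\log$ against the empirical spectral measure, and log-concavity gives one-sided control of small-ball probabilities (Proposition \ref{prop:sb-det}-style bounds, now in the log-concave case, which is precisely Theorem \ref{thm:4-main} being set up --- so one must be careful not to argue circularly). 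To avoid circularity I would instead invoke a direct comparison inequality of the form: for any homogeneous degree-$n$ polynomial $D\ge 0$ and isotropic log-concave $\mathbf{a}$, $\mathbb{E}|D(\mathbf{a})| \ge c^n\, \mathbb{E}|D(g)|$ fails in general but $\left(\mathbb{E}|D(\mathbf{a})|\right)^{1/n} \ge c\,n^{-3/2}\left(\mathbb{E}|D(g)|\right)^{1/n}$ holds; the $n^{-3/2}$ is affordable. I expect the main obstacle to be establishing this last comparison cleanly without circularity, i.e.\ getting a lower bound on $\mathbb{E}|\det T(\mathbf{a})|$ that does \emph{not} presuppose the very anti-concentration estimate \eqref{thm:4-main} is proving --- the resolution is to route everything through the upper bound $\mathbb{E}\|T(\mathbf{a})\|_{\rm HS}\le n^{3/2}$ (Markov, isotropy) on one side and a Gaussian-only lower estimate for $\mathbb{E}|\det T(g)|$ on the other, with the log-concave hypothesis entering only through reverse-H\"older, which is unconditional.
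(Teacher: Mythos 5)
Your proposal is a survey of strategies, not a proof, and it never actually closes the central comparison. The final paragraph concedes as much: you identify that the heart of the matter is to lower-bound $\mathbb{E}|\det T(\mathbf a)|$ against $\mathbb{E}|\det T(g)|$ without presupposing anti-concentration, and you then claim ``the log-concave hypothesis enters only through reverse-H\"older, which is unconditional.'' But reverse-H\"older (Bobkov/Carbery--Wright/Nazarov--Sodin--Volberg) compares $\bigl(\mathbb{E}|D(\mathbf a)|^p\bigr)^{1/p}$ to $\bigl(\mathbb{E}|D(\mathbf a)|^q\bigr)^{1/q}$ for the \emph{same} measure; it says nothing about how $\mathbb{E}|D(\mathbf a)|$ relates to $\mathbb{E}|D(g)|$. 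The proposed $L_2$ anchor fails for the reason you yourself note ($D^2$ has degree $2n$, so the $2n$-th moments of coordinates genuinely differ between log-concave and Gaussian). The Cauchy--Binet/AM-GM route gives an \emph{upper} bound on $|\det T|$ via $\|T\|_{\rm HS}$, not the lower bound you need. The $\mathbb{E}\log|\det|$ route is circular, as you acknowledge. So every concrete avenue you raise either goes the wrong direction, is circular, or does not in fact transfer between measures, and no working argument remains.

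The paper's proof uses a tool you never mention: K.\ Ball's bodies $K_p(f_{\mathbf a})$ attached to the log-concave density. Because $x\mapsto|\det T(x)|$ is $n$-homogeneous, passing to polar coordinates turns $\mathbb{E}|\det T(\mathbf a)|$ into a spherical integral of $|\det T(\theta)|$ against the radial power $\rho_{K_{N+n}(f_{\mathbf a})}^{N+n}(\theta)$; the same polar computation for the Gaussian gives an explicit $\Gamma$-factor. The cross-measure comparison is then achieved by (i) comparing $\rho_{K_{N+n}}$ to $\rho_{K_{N+1}}$, (ii) using that the normalized Ball body $\overline{K_{N+1}(f_{\mathbf a})}$ is almost isotropic so its radial function is bounded below on the sphere, (iii) a lower bound on $|K_{N+1}(f_{\mathbf a})|$, and (iv) a lower bound on $f_{\mathbf a}(0)$ for isotropic log-concave densities. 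The $n^{-3/2}$ emerges from the Stirling expansion of the Gaussian $\Gamma$-factor $\Gamma\bigl(\tfrac{N+n}{2}+1\bigr)^{1/n}\asymp n^{3/2}$, not from the AM-GM heuristic you describe (that heuristic happens to give the same exponent, but it is not what drives the proof). In short: the essential idea --- reducing a cross-measure expectation comparison for a homogeneous polynomial to a pointwise comparison of radial functions of Ball bodies --- is absent from your proposal, and without it, there is no proof.
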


\noindent {\it Proof.} Integration in polar coordinates along with the fact that $x \mapsto |\det T(x)|$ is $n$-homogeneous yields
	\begin{align}
		\mathbb E |\det T({\bf a})| &= N \omega_N \int_0^\infty \int_{S^{N-1}} r^{N+ n -1} |\det T(\theta)| f_{\bf a}(r\theta) \, d\sigma(\theta) \, dr \nonumber \\
			&= \frac{N\omega_N}{N+n} f(0) \int_{S^{N-1}} |\det T(\theta)| \rho_{K_{N+n}(f_{\bf a})}^{N+n}(\theta) \, d\sigma(\theta), \label{eq:4-4}
	\end{align} where $\rho_{K_p(f_{\bf a})}$ is the radial function of K. Ball's body associated with $f_{\bf a}$, see \cite[Section 3]{Pa-sb}, \cite[Section 2.5]{BGVV} for the definition.
Applying the same formula for ${\bf a}$ replaced by $g$ we find
\begin{align} \label{eq:4-4b}
	\mathbb E |\det T(g)| = \frac{N\omega_N}{N+n} \frac{2^{n/2} \Gamma(\frac{N+n}{2} +1)}{\pi^{N/2}}\int_{S^{N-1}}  |\det T(\theta)| \, d\sigma(\theta) .
\end{align}
In view of \cite[Proposition 2.5.7]{BGVV}, \cite[Eqt. (3.11)]{Pa-sb} we have that 
\begin{align}
	\rho_{K_{N+n}}(\theta) \geq e^{\frac{N}{N+1} - \frac{N}{N+n}} \rho_{K_{N+1}}(\theta)
\end{align} for all $\theta \in S^{N-1}$.  
Hence, we may lower bound \eqref{eq:4-4} by
\begin{align} \label{eq:4-5}
	\mathbb E |\det T({\bf a})| \geq \frac{N\omega_N}{N+n} f(0) e^{cn} |K_{N+1}(f_{\bf a})|^{\frac{N+n}{N}} \int_{S^{N-1}} |\det T(\theta)| \rho_{\overline{K_{N+1}(f_{\bf a})}}^{N+n}(\theta) \, d\sigma(\theta),
\end{align} where $\overline A$ denotes the homothetic image of $A$ of volume 1, i.e., ${\overline A} := |A|^{-1/N} A$.

Next, since $f_{\bf a}$ is isotropic it follows that $\overline {K_{N+1}(f_{\bf a})}$ is almost isotropic, hence $\rho_{\overline{K_{N+1} } }(\theta) \geq c_1f_{\bf a}(0)^{1/N}$ for all $\theta \in S^{N-1}$. 
Taking into account this, \eqref{eq:4-4b}, and \eqref{eq:4-5} we may write
\begin{align}
	\mathbb E|\det T({\bf a})| \geq \frac{c_2^n}{\Gamma(\frac{N+n}{2}+1)} f_{\bf a}(0)^{2+\frac{n}{N}} |K_{N+1}(f_{\bf a})|^{\frac{N+n}{N}} \mathbb E |\det T(g)|.
\end{align}
From \cite[Proposition 2.5.8]{BGVV} we have that 
\begin{align}
	|K_{N+1}(f_{\bf a})| \geq c_3/f_{\bf a}(0).
\end{align} 
Plugging this estimate into the previous inequality we derive the following:
\begin{align}
	\mathbb E|\det T({\bf a})| \geq \frac{c_4^n}{(c_5n)^{3n/2}} f_{\bf a}(0) \mathbb E|\det T(g)|.
\end{align}
Finally, since $f_{\bf a}$ is isotropic one has $f_{\bf a}(0)^{1/N} \geq c_6>0$, see \cite[Proposition 2.3.12]{BGVV}, thus the result follows. \prend

\medskip

\noindent {\it Proof of Theorem \ref{thm:4-main}.} Viewing $\det T({\bf a})$ as a (homogeneous) polynomial on ${\bf a}$ of degree $n$,
and employing the anti-concentration estimate due to Carbery and Wright \cite[Theorem 8]{CW} we find
	\begin{align}
		\left(\mathbb E|\det T({\bf a})| \right)^{1/n}\cdot \mathbb P( |\det T({\bf a})|^{1/n} \leq \varepsilon) \leq C n \varepsilon, \quad \varepsilon>0.
	\end{align}
On the other hand Proposition \ref{prop:4-det}, in conjunction with Proposition \ref{prop:sb-det}, yields 
\begin{align*}
	(\mathbb E |\det T({\bf a})|)^{1/n} \geq cn^{-3/2} \left(\mathbb E|\det T(g)|\right)^{1/n}  \geq c' n^{-5/2}.
\end{align*}
Combining all the above we get the desired result. \prend

\smallskip
Following the line of argument of Theorem \ref{thm:adv}, with  
appropriate adjustments provided by the estimates proved in this section, we conclude that Theorem \ref{thm:adv} holds for isotropic, log-concave vectors:

\begin{corollary}
	Let $m,n \in \mathbb N$ and $\delta\in (0,1)$ which satisfy $m\geq C\delta^{-4} n \log(en/\delta)$. Then, for any
	$N \geq m+n$, for any isotropic, log-concave random vector ${\bf a}=(a_j)_{j=0}^N \in \mathbb R^{N+1}$, the 
	numerator $P_{mn}$ of the $[m,n]$-Pad\'e approximant of $f_N(z) = \sum_{j=0}^N a_jz^j$ satisfies
	\begin{align*}
			\mathbb P \left( \frac{\log L(P_{mn})}{m} >\delta^4\right) <\delta.
	\end{align*}
\end{corollary}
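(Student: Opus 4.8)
The plan is to mimic the proof of Theorem \ref{thm:adv} line by line, replacing each appeal to an independence-based estimate with its log-concave analogue established in Section \ref{LogConcave}. Concretely, I would first isolate exactly which inputs the proof of Theorem \ref{thm:main-1} actually consumes: in view of the bound \eqref{eq:2-7} on $L(P_{mn})$, the proof only needs (i) an anti-concentration bound for $|a_0|$ to control $\mathcal E_0$; (ii) Markov-type large-deviation bounds for $\sum_{j=0}^m|a_j|$ and for $\vertiii{T_m^{(n)}}_1$ to control $\mathcal E_1$ and $\mathcal E_2$; and (iii) an anti-concentration bound for $|\det A_m^{(n)}|^{1/n}$ and $|\det A_m^{(n+1)}|^{1/(n+1)}$ to control $\mathcal E_3$. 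These are precisely the properties labelled \textbf{P1}, \textbf{P2}, \textbf{P3} in the excerpt. Property \textbf{P1} is supplied by the Lemma in subsection \ref{logconanti} (applied to the coordinate functional $e_j$, which has unit norm), giving $\mathcal L(a_j,\varepsilon)\le C\varepsilon$; property \textbf{P2} is supplied by \eqref{eq:w-dev}; and property \textbf{P3} is supplied by Theorem \ref{thm:4-main}, giving $\mathbb P(|\det A_m^{(n)}|^{1/n}\le \varepsilon)\le Cn^c\varepsilon$ (note $A_m^{(n)}$ is a Toeplitz matrix built from a subvector of $\bf a$, which is again isotropic and log-concave by Fact \ref{fact:hered}, so the theorem applies).

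The key steps, in order, would be as follows. First, since $\bf a$ is isotropic we again take $\gamma=1$. Second, run the event decomposition of the proof of Theorem \ref{thm:main-1} verbatim: with $\mathcal E_0,\mathcal E_1,\mathcal E_2,\mathcal E_3$ as there, we get $\mathbb P(\mathcal E_0)\le C\varepsilon$ from \textbf{P1}, $\mathbb P(\mathcal E_1\cup\mathcal E_2)\le 2/t$ from \textbf{P2} and Markov, and $\mathbb P(\mathcal E_3)\le Cn^c\varepsilon$ from \textbf{P3} applied to both $A_m^{(n)}$ and $A_{m}^{(n+1)}$ (the exponent $1/(n+1)$ instead of $1/n$ costs only another universal polynomial factor in $n$). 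Third, on the complement of $\bigcup_i\mathcal E_i$ one obtains, exactly as before, $L(P_{mn})\le m(2nt/\varepsilon)^{2n}$. Fourth, choose $\varepsilon = \delta/(C n^{c+1})$ and $t = Cn/\delta$ so that the total failure probability $C\varepsilon + 2/t + Cn^c\varepsilon$ is at most $\delta$; this yields $\log L(P_{mn})\le \log m + 2n\log(C' n^{c'}/\delta)$ with probability $>1-\delta$. Fifth, observe that the hypothesis $m\ge C\delta^{-4}n\log(en/\delta)$ forces the right-hand side, divided by $m$, to be below $\delta^4$ (the extra polynomial-in-$n$ factors hidden in the Toeplitz anti-concentration constant only change the universal constant $C$ and are absorbed by the $\log(en/\delta)$, since there is no $K$ parameter here). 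This gives $\mathbb P(\log L(P_{mn})/m > \delta^4) < \delta$, as claimed.

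The main obstacle I anticipate is bookkeeping the polynomial-in-$n$ losses coming from Theorem \ref{thm:4-main}: where the independent case had the clean bound $n\mathcal L(a_m,\varepsilon)$, here we only have $Cn^c\varepsilon$, so one must check that replacing $\varepsilon$ by $\delta/(Cn^{c+1})$ (rather than $\delta/(5Kn)$) still leaves the final threshold condition $\delta^4 > (\log m)/m + 2n\log(\text{poly}(n)/\delta^2)/m$ satisfied under the stated range $m\ge C\delta^{-4}n\log(en/\delta)$. This is true because $\log(n^{c'}/\delta) = O(\log(en/\delta))$, so the polynomial factors only inflate the universal constant $C$ in \eqref{therange}; one simply takes $C$ large enough. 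A minor second point is that one must invoke Fact \ref{fact:hered} to justify that every Toeplitz submatrix appearing — $A_m^{(n)}$, $A_m^{(n+1)}$, $T_m^{(n)}$ — is built from a coordinate subvector of an isotropic log-concave vector, hence still isotropic and log-concave, so that both the anti-concentration Lemma of \ref{logconanti} and Theorem \ref{thm:4-main} remain applicable. With these caveats handled, the argument is otherwise a faithful transcription of the proofs of Theorems \ref{thm:main-1} and \ref{thm:adv}.
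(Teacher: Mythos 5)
Your proposal is correct and coincides with what the paper intends: the paper's ``proof'' of this corollary is a single sentence pointing back to the argument of Theorem \ref{thm:adv} and the substitutes P1, P2, P3 developed in Section \ref{LogConcave}, and you have correctly identified and supplied exactly those substitutes (the Lemma of subsection \ref{logconanti} for $\mathcal E_0$, the Markov bound \eqref{eq:w-dev} for $\mathcal E_1\cup\mathcal E_2$, and Theorem \ref{thm:4-main} together with Fact \ref{fact:hered} for $\mathcal E_3$), as well as the observation that the extra $\mathrm{poly}(n)$ loss in the Toeplitz anti-concentration bound is absorbed into the universal constant $C$ of the range condition since $\log(n^{c'}/\delta)=O(\log(en/\delta))$.
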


\smallskip
\bibliography{prob-edrei-ref}
\bibliographystyle{alpha}



\appendix

\section{Discrepancy of measures} \label{app:disc}

We provide a proof for Proposition \ref{prop:disc}. To this end, we prove the following estimate:

\begin{proposition} [Discrepancy]
	Let $P(z) = \sum_{j=0}^N a_j z^j$ with $a_0a_N\neq 0$, let $\nu_P$ be the normalized zero-counting measure associated with $P$, 
	and let $\mu$ be the uniform probability measure on $\mathbb T=\{z\in \mathbb C : |z|=1\}$. 
	Then, for any function $f : \mathbb C \to \mathbb R$ which is Lipschitz and bounded, we have
	\begin{align}
		\left| \int f \, d\nu_P - \int f\, d\mu \right| \leq 31 \|f\|_{\rm BL}(\log L(P)/N)^{1/4}.
	\end{align}
\end{proposition}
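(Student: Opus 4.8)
The plan is to deduce this discrepancy estimate from the two one-variable clustering bounds already available—the radial bound of Proposition~\ref{prop:J2} and the Erd\H os--Tur\'an angular bound of Proposition~\ref{prop:ET}—by a standard splitting of the test function into a radial and an angular part. Set $\eta := (\log L(P)/N)^{1/4}$; note $0<\eta<1$ may be assumed, since otherwise the right-hand side exceeds $\|f\|_{\rm BL}$ and the bound is trivial (using $|\int f\,d\nu_P-\int f\,d\mu|\le 2\|f\|_\infty\le 2\|f\|_{\rm BL}$ and adjusting the constant). Also normalize $\|f\|_{\rm BL}\le 1$. The first step is to replace $f$ by its restriction to the unit circle: because $\nu_P$ puts mass $1-\nu_P(R(1,\rho))$ outside the annulus $R(1,\rho)$ and within the annulus every point is within distance $\rho$ of $\mathbb T$ (radially), Lipschitzness gives $|f(z)-f(z/|z|)|\le \rho$ for $z\in R(1,\rho)$, while the contribution of the exceptional set costs at most $\|f\|_\infty\,(1-\nu_P(R(1,\rho)))\le \log L(P)/(\rho N)$ by Proposition~\ref{prop:J2}. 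Thus, writing $\tilde f(z):=f(z/|z|)$ for $z\neq 0$,
\begin{align}
\left| \int f\,d\nu_P - \int \tilde f\,d\nu_P \right| \le \rho + \frac{\log L(P)}{\rho N},
\end{align}
and $\int f\,d\mu=\int\tilde f\,d\mu$ since $\mu$ lives on $\mathbb T$.

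The second step handles the angular part. The function $\tilde f$ depends only on $\theta={\rm Arg}\,z$; call it $h(\theta)$, a $1$-Lipschitz, $2\pi$-periodic function on $[0,2\pi)$ with $\|h\|_\infty\le 1$. Both $\int \tilde f\,d\nu_P$ and $\int\tilde f\,d\mu$ are integrals of $h$ against the push-forwards of $\nu_P$ and $\mu$ under ${\rm Arg}$, and the Erd\H os--Tur\'an inequality of Proposition~\ref{prop:ET} says precisely that these push-forward measures have Kolmogorov (sector/interval) discrepancy at most $16\sqrt{\log L(P)/N}=16\eta^2$. The classical Koksma-type inequality then bounds the difference of integrals of a Lipschitz function by (total variation of the function)$\times$(discrepancy): approximating $h$ by a step function with $J$ equal angular steps introduces an error $\le 2\pi/J$ in sup norm on each side (hence $\le 2\pi/J$ in the integrals, since both measures are probabilities), while the step function's integral difference is bounded by $J$ times the sector discrepancy, i.e. $16J\eta^2$. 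Optimizing over $J$ (take $J\asymp \eta^{-1}$) gives
\begin{align}
\left| \int \tilde f\,d\nu_P - \int \tilde f\,d\mu \right| \le C_1\,\eta
\end{align}
for an explicit constant; one chooses $J=\lceil \eta^{-1}\rceil$ and checks $2\pi/J+16J\eta^2 \le (2\pi+32)\eta$ or so.

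The third step is to combine and optimize over $\rho$. From the two displays,
\begin{align}
\left| \int f\,d\nu_P - \int f\,d\mu \right| \le \rho + \frac{\log L(P)}{\rho N} + C_1\eta,
\end{align}
and choosing $\rho=\sqrt{\log L(P)/N}=\eta^2\le 1$ makes the first two terms equal, contributing $2\eta^2\le 2\eta$. Collecting constants yields a bound of the form $C\eta$, and one tracks the numbers to land at $31(\log L(P)/N)^{1/4}$, which in turn (after the trivial-case discussion and the earlier reduction in Proposition~\ref{prop:disc}'s statement) gives the $32$ there with room to spare. The only genuinely delicate point is the bookkeeping of constants in the Koksma/step-function argument to guarantee the stated numerical value $31$; the structure itself is routine. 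I would present the Koksma step carefully (perhaps as a short inline lemma: if two probability measures on the circle have sector-discrepancy $\le D$ then Lipschitz functions of constant $\ell$ see difference $\le c\,\ell\sqrt{D}$ after optimization, but here we only need the linear-in-$J$ version since $h$ is $1$-Lipschitz and $2\pi$-periodic), and leave the final constant-chasing as a direct computation.
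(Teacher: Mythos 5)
Your proposal is correct in structure and rests on exactly the same two inputs as the paper (Proposition~\ref{prop:J2} for radial escape, Proposition~\ref{prop:ET} for angular sector discrepancy), a Lipschitz locality argument, and an optimization over the number of angular sectors at scale $\eta^{-1}=(N/\log L(P))^{1/4}$. The organization differs: the paper partitions the annulus $R(1,\rho)$ directly into $m$ polar rectangles and compares $\nu_P$ to $\mu$ rectangle-by-rectangle in a single pass; you first collapse radially onto $\mathbb T$ (paying $\rho + 2\log L(P)/(\rho N)$ --- note the factor $2$, since $|f(z)-f(z/|z|)|\le 2\|f\|_\infty$ off the annulus, not $\|f\|_\infty$) and then run a Koksma-type step-function inequality for the pushed-forward angular measures. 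This modular split is arguably cleaner and makes the ``sector-discrepancy implies weak-$*$ closeness for Lipschitz tests'' mechanism explicit, whereas the paper inlines it; the trade-off is that your crude bookkeeping ($J=\lceil\eta^{-1}\rceil$ with $\|h-h_J\|_\infty\le 2\pi/J$ and $16J\eta^2$) lands at roughly $(2\pi+32)\eta+3\eta^2$, above the stated $31$. To actually hit $31$ you would need to tighten the Koksma step (midpoint approximation gives $\pi/J$, and the true optimum $J\asymp\sqrt{\pi}/(2\sqrt{2}\,\eta)$ gives $8\sqrt{2\pi}\,\eta\approx 20\eta$ for the angular part), handle the integer rounding of $J$ by disposing of moderate $\eta$ with the trivial bound $|\int f\,d\nu_P-\int f\,d\mu|\le 2\|f\|_{\rm BL}$, and include the factor of $2$ in the radial escape term. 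You correctly flagged this as the only delicate point; with those corrections the argument goes through and is a legitimate variant of the paper's proof.
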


\noindent {\it Proof.} Without loss of generality let $\|f\|_{\rm BL} := \|f\|_\infty +\|f\|_{\rm Lip} =1$ (otherwise we work with $f_1: = f/\|f\|_{\rm BL}$).
To ease the notation let us set $L= \log L(P)$. We assume, as we may, that $L\leq N$, otherwise the result holds trivially.
 Let $0<\rho \leq 1$, $m\geq 1$ (both to be chosen appropriately later), and $Z_P : = \{z \in \mathbb C: P(z) =0 \}$. Note that 
	\begin{align}
		\int f \, d \nu_P = \frac{1}{N} \left [ \sum_{z\in Z_P\cap R(\rho)} f(z) + \sum_{z\in Z_P \setminus R(\rho)} f(z) \right].
	\end{align}
First, Jensen's result (Proposition \ref{prop:J2}) yields 
	\begin{align}
	\left | \sum_{z\in Z_P\setminus  R(\rho)} f(z) \right| \leq \|f\|_\infty |Z_P\setminus R(\rho)| \leq \frac{L}{\rho}.
	\end{align}
If we decompose $R(\rho)$ into $m$ equi-angular polar rectangles, i.e., 
	\[
		R(\rho) = \bigcup_{j=1}^m R(\rho) \cap \left\{ z: \frac{2(j-1)\pi}{m} \leq \arg z < \frac{2 j \pi}{m}  \right\}  \equiv \bigcup_{j=1}^m R_j,
	\] then for each $j=1, \ldots, m$ and for any $w\in Z_P \cap R_j$ we get that
	
	\[
		\left | f(w) - \frac{m}{2\pi} \int_{2\pi(j-1)/m}^{2\pi j/m} f(e^{it}) \, dt \right| \leq \|f\|_{\rm Lip} {\rm diam}(Z_P \cap R_j) \leq 2\pi \left(\frac{1}{m} + \rho \right).
		\footnote{Let $0<a<b$, $0<\theta < \phi <2\pi$, then for $R:= \{re^{it} \mid a<r<b, \, \theta < t < \phi\}$ we have ${\rm diam}(R) \leq |a-b| + (|a| \land |b|)|\theta-\phi|$.}
	\] 
Hence, the choice $\rho = 1/m$ yields for each $j=1, \ldots, m$ that
	\begin{align} \label{eq:A-4}
		\left| \bar{f_j} - \frac{1}{|Z_P\cap R_j|} \sum_{z\in Z_P\cap R_j} f(z) \right| \leq \frac{4\pi}{m}, \quad 
		\bar{f_j}: = \frac{m}{2\pi}\int_{2\pi(j-1)/m}^{2\pi j/m} f(e^{it}) \, dt.
	\end{align}
Now we may write
\begin{equation} \label{eq:A-5}
\begin{split}
	\left | \int f\, d\nu_P - \int f\, d\mu \right| &\leq  \left| \frac{1}{N} \sum_{z\in Z_P\cap R(\rho)} f(z) - \frac{1}{2\pi} \int_0^{2\pi}f(e^{it}) \, dt \right| + \frac{m L}{N} \\
& \leq \sum_{j=1}^m \left| \frac{1}{N} \sum_{z\in Z_P\cap R_j} f(z) -\frac{1}{2\pi} \int_{2\pi(j-1)/m}^{2\pi j/m} f(e^{it}) \, dt \right| +  \frac{m L}{N} \\
& = \sum_{j=1}^m \left| \frac{|Z_P\cap R_j|}{N}  \left( \frac{1}{|Z_P\cap R_j|} \sum_{z\in Z_P\cap R_j} f(z)  - \bar{f_j} \right)+ 
				\left( \frac{|Z_P\cap R_j|}{N}-\frac{1}{m} \right) \bar {f_j} \right| \\
				 &+ \frac{m L}{N} \\
& \stackrel{\eqref{eq:A-4}}\leq \frac{4\pi L}{m} \sum_{j=1}^m \frac{|Z_P \cap R_j|}{N} + m \max_{j\leq m} \left| \nu_P(R_j) - \frac{1}{m}\right| \int |f| \, d\mu + \frac{m L}{N} \\
& \leq \frac{4\pi }{m} + m  \max_{j\leq m} \left| \nu_P(R_j) - \frac{1}{m}\right|  + \frac{m L}{N}.
\end{split}
\end{equation} 
It remains to notice that 
	\[
		\nu_P(R_j) \leq \nu_P \left( z: \frac{2\pi(j-1)}{m} \leq \arg z < \frac{2\pi j}{m} \right) \leq \frac{1}{m} + 16 \sqrt{\frac{L}{N}},
	\] by the Erd\H os-Tur\'an estimate (Proposition \ref{prop:ET}), and
	\begin{align*}
	\nu_P(R_j) & = \nu_P\left( z : \frac{2\pi(j-1)}{m} \leq \arg z < \frac{2\pi j}{m}\right) - \nu_P \left( \left \{z: \frac{2\pi(j-1)}{m} \leq \arg z < \frac{2\pi j}{m} \right\} \setminus R(\rho) \right) \\
			& \geq \frac{1}{m} -16\sqrt{ \frac{L}{N} } - \frac{mL}{N}.   
	\end{align*}
Plugging these estimates into inequality \eqref{eq:A-5} we obtain
	\[
		\left | \int f\, d\nu_P - \int f\, d\mu \right| \leq \frac{4\pi }{m} + 16m\sqrt{\frac{L}{N}} + \frac{2m^2 L}{N},
	\]
and ``optimizing'' over $m$, by choosing $m = (\frac{N}{L})^{1/4}\geq 1$, we conclude the assertion. \prend

\end{document}